\UseAllTwocells \xyoption{frame} \CompileMatrices
\newtheorem{prop}{Proposition}[section]
\newtheorem{lem}[prop]{Lemma}
\newtheorem{cor}[prop]{Corollary}
\newtheorem{thm}[prop]{Theorem}
\newtheorem{rmk}[prop]{Remark}
\newtheorem{defn}[prop]{Definition}
\newcommand{\noprint}[1]{}
\renewcommand{\tilde}{\widetilde}
\newcommand{\E}{\mathop{\sf E}\nolimits}
\newcommand{\zz}{{\mathbb Z}}
\newcommand{\aaa}{{\mathbb A}}
\newcommand{\qq}{{\mathbb Q}}
\newcommand{\rr}{{\mathbb R}}
\newcommand{\Gm}{{{\mathbb G}_{\mbox{\tiny\rm m}}}}
\newcommand{\sD}{{\mathcal D}}
\newcommand{\sB}{{\mathcal B}}
\newcommand{\sC}{{\mathcal C}}
\newcommand{\sI}{{\mathcal I}}
\newcommand{\sG}{{\mathcal G}}
\newcommand{\sS}{{\mathcal S}}
\newcommand{\sN}{{\mathcal N}}
\newcommand{\sO}{{\mathcal O}}
\newcommand{\sX}{{\mathcal X}}
\newcommand{\sY}{{\mathcal Y}}
\newcommand{\sM}{{\mathcal M}}
\newcommand{\sZ}{{\mathcal Z}}
\newcommand{\sF}{{\mathcal F}}
\newcommand{\Coh}{\mbox{Coh}}
\newcommand{\SU}{\text{SU}}
\newcommand{\SL}{\text{SL}}
\newcommand{\PGL}{\text{PGL}}
\newcommand{\cEnd}{\mathscr{E}nd}
\newcommand{\cHom}{\mathscr{H}om}
\DeclareMathOperator{\Pic}{Pic}
\DeclareMathOperator{\Ch}{Ch}
\DeclareMathOperator{\CR}{CR}
\DeclareMathOperator{\Supp}{Supp}
\DeclareMathOperator{\Td}{Td}
\DeclareMathOperator{\End}{End}
\DeclareMathOperator{\MIC}{\textbf{MIC}}
\DeclareMathOperator{\can}{can}
\DeclareMathOperator{\Thm}{Thm}
\DeclareMathOperator{\pa}{pa}
\DeclareMathOperator{\Gal}{Gal}
\DeclareMathOperator{\Rat}{Rat}
\DeclareMathOperator{\padeg}{pa-deg}
\newcommand{\rk}{\mathop{\rm rk}}
\newcommand{\pr}{\mathop{\rm pr}\nolimits}
\newcommand{\Par}{\mathop{\rm Par}\nolimits}
\newcommand{\red}{\mathop{\rm red}\nolimits}
\newcommand{\spec}{\mathop{\rm Spec}\nolimits}
\numberwithin{equation}{subsection}
\newcommand {\mat}      [1] {\left(\begin{array}{#1}}
\newcommand {\rix}          {\end{array}\right)}
\def\Label{\label}
\title[Bogomolov-Gieseker inequality for tame Deligne-Mumford surfaces]{On the Bogomolov-Gieseker inequality for tame Deligne-Mumford surfaces}
\author{Yunfeng Jiang and  Promit Kundu}
\address{Department of Mathematics\\ University of Kansas\\ 405 Snow Hall 1460 Jayhawk Blvd\\Lawrence KS 66045 USA} 
\email{y.jiang@ku.edu}
\address{Department of Mathematics\\ University of Kansas\\ 405 Snow Hall 1460 Jayhawk Blvd\\Lawrence KS 66045 USA} 
\email{kundupromit63@ku.edu}
\address{Department of Mathematics, Shanghai Normal University, Shanghai 200234, People's Republic of China}
\email{hsun@shnu.edu.cn, hsunmath@gmail.com}
\begin{document}
\sloppy \maketitle
\begin{abstract}
We generalize the Bogomolov-Gieseker inequality for semistable coherent sheaves on smooth projective surfaces to smooth Deligne-Mumford surfaces.   We work over positive characteristic $p>0$ and generalize Langer's method to smooth Deligne-Mumford stacks.  As applications we obtain the  Bogomolov inequality for semistable coherent sheaves on a Deligne-Mumford surface in characteristic zero, and the Bogomolov inequality for semistable  sheaves on a root stack over a smooth surface which is equivalent to the Bogomolov inequality for the rational parabolic  sheaves on a smooth surface $S$.
In a joint appendix with Hao Max Sun, we generalize the Bogomolov inequality formula to Simpson Higgs sheaves on tame Deligne-Mumford stacks. 
\end{abstract}

\maketitle

\tableofcontents

\section{Introduction}

Let $X$ be a smooth projective surface over an algebraically closed field $\kappa$ of characteristic zero.  The Bogomolov-Gieseker inequality is a famous formula for slope semistable torsion free coherent sheaves on $X$, which says that the discriminant  $\Delta(E)=2\rk(E)c_2(E)-(\rk(E)-1)c_1(E)^2\geq 0$ if $E$ is slope semistable.  This formula has many important applications such as the construction of Bridgeland stability conditions for surfaces.  

If the base field $\kappa$ is of positive characteristic $p>0$, and $X$ is a smooth projective surface over $\kappa$. 
Let $F: X\to X$ be the absolute Frobenius morphism of $X$.  A torsion free coherent sheaf $E$ is called strongly slope semistable if any Frobenius pullback $F^*E$ is slope semistable.  For a strongly slope semistable torison free coherent sheaf $E$ on $X$, the Bogomolov-Gieseker inequality 
$$\Delta(E)=2\rk(E)c_2(E)-(\rk(E)-1)c_1(E)^2\geq 0$$ 
still holds, see \cite[Theorem 3.2]{Langer}.  In general if  $E$ is just slope semistable, the inequality has a correction term depending on the prime number $p$, see \cite[Theorem 3.3]{Langer}. The Bogomolov inequality formula in positive characteristics has applications to prove the boundedness for the moduli functor of semistable coherent sheaves on $X$, and the restriction theorem of slope stable torsion free coherent sheaves on $X$ to a divisor $D$ inside $X$.

In this paper we prove the  Bogomolov-Gieseker inequality  for slope semistable torsion free coherent sheaves on a smooth projective two dimensional Deligne-Mumford stack  $\sX$ (called a surface Deligne-Mumford stack)
in characteristic $p>0$. 
We work on tame surface Deligne-Mumford stacks which means that the orders of the local stabilizer groups of $\sX$ are all 
coprime  to the character $p$.
We use the modified slope semistability of Nironi \cite{Nironi} defined by generating sheaves $\Xi$ on $\sX$.  One motivation for our study on the  Bogomolov-Gieseker inequality for slope semistable torsion free coherent sheaves is the Vafa-Witten theory for projective surfaces and  surface Deligne-Mumford stacks in \cite{TT1}, \cite{JK}, where the Bogomolov-Gieseker inequality for the modified semistable sheaf $E$ will make the moduli space of Gieseker  stable sheaves on a root stack surface $\sX$ empty for $c_2(E)<0$.  The Vafa-Witten theory for surface Deligne-Mumford stacks has applications to prove the S-duality conjecture in \cite{VW} which is a functional duality for the generating functions counting 
$\SU(r)$ and $^{L}\SU(r)=\SU(r)/\zz_r$-instantons, see \cite{Jiang_2019}, \cite{Jiang_ICCM}, \cite{JKool}. 
On the other hand, the Bogomolov-Gieseker inequality  for slope semistable torsion free coherent sheaves on a surface Deligne-Mumford stack 
$\sX$ is interesting in its own since it will prove some restriction theorem of slope semistable sheaves on $\sX$ to a large degree divisor inside 
$\sX$.  This will have applications to the reduction  of the moduli of stable  Higgs bundles on surfaces to the moduli space of stable Higgs bundles on curves, which is related to the Langlands duality and mirror symmetry between the moduli spaces of $\SL_r$ and $\PGL_r$-Higgs bundles on curves. 

Let us first state our main result.  We fix $\sX$ to be  a surface Deligne-Mumford stack, and a polarization $(\Xi, \sO_X(1)=H)$ where $\Xi$ is a generating sheaf on $\sX$ and $\sO_X(1)$ a polarization on the coarse moduli space $X$.  We choose the generating sheaf $\Xi$ to satisfy the condition that its restriction to any codimension one component in $I\sX_1$ is a direct sum of locally free sheaves of the same rank. 
Here $I\sX_1\subset I\sX$ is the components in the inertia stack $I\sX$ consisting of codimension one components.  Then in this case the modified  slope of a torsion free coherent sheaf $E$ is given by:
$$\mu_{\Xi}(E)=\frac{\deg(E)}{\rk(\Xi)\rk(E)}$$
where $\deg(E)=c_1(E)\cdot H$.   The modified slope semistability of a torsion free sheaf $E$ is equivalent to the orbifold semistability of the surface Deligne-Mumford stack $\sX$ where the slope is given by $\mu(E)=\frac{\deg(E)}{\rk(E)}$.

We can not prove a Bogomolov-Gieseker inequality formula for a modified slope semistable torsion free sheaf $E$ for an arbitrary general generating sheaf 
$\Xi$, where the modified slope $\mu_{\Xi}(E)$ is $\frac{\rk(\Xi)\deg(E)}{\rk(E)}$ plus the contributions from codimension one components 
$I\sX_1\subset I\sX$.  
We do not know if such an inequality holds for general modified slope semistable torsion free sheaves, except that we can show it holds for root stacks and a special choice of generating sheaf. 
This has applications to the calculation of Vafa-Witten invariants for root stack surfaces in \cite{JK}.
For a root stack $\sX$ (associated to a pair $(X,D)$ where $D\subset X$ is a normal crossing divisor) over a smooth surface $X$, a choice of generating sheaf  will gives the equivalence between the category of coherent sheaves on $\sX$ and the category of rational parabolic sheaves on $(X,D)$. 

Our main result is:

\begin{thm}\label{thm_Deligne-Mumford_2-dim_intro}(Theorem \ref{thm_Deligne-Mumford_2-dim})
Let $\sX$ be a two dimensional smooth  tame projective Deligne-Mumford stack and let $\Xi$ be a generating sheaf 
such that its restriction to every component in $I\sX_1$ is a direct sum of locally free coherent sheaves of the same rank.  
Then if $E$ is a strongly modified slope semistable torsion free sheaf on $\sX$, we have 
$$\Delta(E)\geq 0.$$
\end{thm}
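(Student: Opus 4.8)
The plan is to reduce the stacky Bogomolov--Gieseker inequality to Langer's theorem on the smooth projective surface $X$ underlying (a suitable model of) $\sX$, by exploiting the hypothesis on $\Xi\resto_{I\sX_1}$ and the tameness of $\sX$. The first step is to set up the Chern class/discriminant formalism on the tame Deligne--Mumford surface $\sX$: one works with the rational Chow groups of $\sX$, where $\Delta(E) = 2\rk(E)c_2(E) - (\rk(E)-1)c_1(E)^2$ makes sense, and checks that under the chosen generating sheaf $\Xi$ the modified slope reduces to $\mu_{\Xi}(E) = \deg(E)/(\rk(\Xi)\rk(E))$ — this is exactly the ``nice'' case singled out in the introduction, so modified slope semistability of $E$ is equivalent to orbifold slope semistability with respect to $\mu(E) = \deg(E)/\rk(E)$. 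Because $\sX$ is tame, the absolute Frobenius $F\colon\sX\to\sX$ is well behaved (it is finite flat away from the stacky locus in the expected way, and pullback along it is exact on the quasi-coherent category), so ``strongly modified slope semistable'' means $F^{n\,*}E$ is orbifold slope semistable for all $n\ge 0$.

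The key step is to carry over Langer's argument for strong semistability to the orbifold setting. I would follow Langer's proof of \cite[Theorem 3.2]{Langer}: assume $\Delta(E) < 0$ and derive a contradiction by producing, via the Harder--Narasimhan filtrations of the Frobenius pullbacks $F^{n\,*}E$, a destabilizing subsheaf whose slope grows without bound (or, equivalently, by the effective bound on $L_{\max}(F^{n\,*}E)$ that $\Delta<0$ would force). The two ingredients to port are: (i) a Bogomolov-type inequality relating $\Delta(E)$ to the discriminants of the graded pieces of a filtration plus the square of the differences of slopes — this is pure intersection theory on the orbifold surface and goes through verbatim on $\mathrm{CH}^*(\sX)_{\qq}$ since the projection formula and Hodge index hold there; and (ii) the behavior of $c_1$, $c_2$ and hence $\Delta$ under Frobenius pullback, namely $c_1(F^*E) = p\,c_1(E)$ and $\Delta(F^*E) = p^2\Delta(E)$ on $\sX$ — here tameness is essential so that $F$ behaves like the Frobenius on the smooth locus and the ramification along the stacky divisors $I\sX_1$ contributes nothing (this is where the hypothesis that $\Xi$, and implicitly the stacky structure along codimension-one components, is ``uniform'' gets used). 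With (i) and (ii) in hand, Langer's numerical argument (Grauert--Mülich-type restriction to a general curve in $\sX$, or the direct bound on $\mu_{\max}-\mu_{\min}$ of $F^{n\,*}E$) yields $\Delta(E)\ge 0$.

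Concretely I would organize the proof as: (1) recall the orbifold intersection theory and the reduction of $\mu_\Xi$ to the ordinary slope under the chosen $\Xi$; (2) establish $c_i$ under Frobenius pullback on a tame DM surface, hence $\Delta(F^*E)=p^2\Delta(E)$; (3) prove the orbifold Bogomolov inequality for a filtration (the ``$\Delta$ is superadditive up to slope-difference squares'' estimate) using Hodge index on $\sX$; (4) prove an orbifold Grauert--Mülich / Langer restriction estimate bounding $\mu_{\max}(F^{n\,*}E)-\mu_{\min}(F^{n\,*}E)$ — one passes to the coarse space $X$ for the actual curve restriction, using that $\Xi\resto_{I\sX_1}$ has constant rank so degrees match up; (5) combine: if $\Delta(E)<0$ then steps (2)--(4) force $\mu_{\max}(F^{n\,*}E)-\mu_{\min}(F^{n\,*}E)\to\infty$ faster than allowed, contradicting semistability of some $F^{n\,*}E$.

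The main obstacle I expect is step (4), the restriction/boundedness input: Langer's original argument uses in an essential way a very general curve $C\subset X$ in a high multiple of the polarization, together with control of the instability of the restriction; on a Deligne--Mumford surface one must either work with orbifold curves $\sC\subset\sX$ (and redo the Bogomolov inequality and Grauert--Mülich on orbifold curves, tracking the parabolic/stacky contributions along $\sC\cap I\sX_1$) or descend to a carefully chosen curve in the coarse space $X$ and relate orbifold semistability on $\sX$ to ordinary semistability on $X$. The hypothesis that $\Xi$ restricts to a constant-rank bundle on each component of $I\sX_1$ is precisely what makes the degree comparison clean and lets the coarse-space curve do the job; making this comparison precise — in particular that an orbifold-destabilizing subsheaf of $F^{n\,*}E\resto_{\sC}$ produces a genuine destabilizing subsheaf downstairs with the expected slope gap — is the technical heart of the argument. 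A secondary subtlety is keeping track of denominators: all Chern numbers live in $\qq$, so one must be careful that the numerical contradiction in step (5) is genuinely a contradiction of an \emph{integrality-free} inequality, which it is, since $\Delta\ge 0$ is a statement about a single rational number being nonnegative.
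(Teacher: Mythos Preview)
Your proposal has a genuine gap in step~(5). The hypothesis is that $E$ is \emph{strongly} modified slope semistable, so every Frobenius pullback $(F^n)^*E$ is itself semistable and hence has trivial Harder--Narasimhan filtration: $\mu_{\max}((F^n)^*E)=\mu_{\min}((F^n)^*E)=\mu(E)$ for all $n$. There is no slope spread to send to infinity, and the filtration/Hodge-index inequality of step~(3) applied to the trivial filtration is vacuous. What you have sketched is essentially Langer's route to the \emph{corrected} Bogomolov inequality (for merely semistable sheaves, with the $\beta_r$ term) together with the implication ``$H^d\Delta+\beta_r<0\Rightarrow$ not semistable''; but proving that implication for rank~$r$ already requires $\Delta(F_i)\ge 0$ for the strongly semistable HN quotients $F_i$ of some $(F^k)^*E$, and when $E$ is strongly semistable the only such quotient is $(F^k)^*E$ itself, of the same rank~$r$. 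The induction on rank does not close in dimension two by these methods alone---this is precisely why the paper's Langer-style appendix invokes the two-dimensional theorem as an external input.

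The paper takes a completely different route, following Moriwaki. First one reduces to $c_1(E)=0$ by passing to a finite cover (Bloch--Gieseker plus a Frobenius power) on which $c_1$ becomes divisible by $\rk(E)$, and then twisting by a line bundle; the pullback is still strongly semistable and $\Delta(E)\ge 0$ becomes $c_2\ge 0$. For $c_1(E)=0$ the key lemma is a uniform bound $h^0(\sX,(F^n)^*E\otimes L)\le M\cdot p^n$ for any line bundle $L$: semistability forces $H^0$ to vanish after a sufficiently negative twist, so one restricts to a general orbifold curve $\sC=\pi^{-1}(C)$ with $C\in|mH|$ and bounds $h^0$ there by induction on rank. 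Applying this to $E$ and to $E^\vee\otimes\omega_{\sX}$ gives $h^0+h^2$ of $(F^n)^*E$ bounded by $O(p^n)$. On the other hand, the orbifold Riemann--Roch formula yields
\[
\chi(\sX,(F^n)^*E)=-p^{2n}c_2(E)+O(p^n),
\]
where one checks that the twisted-sector contributions from $I\sX_1$ and $I\sX_2$ grow at most like $p^n$. Comparing growth rates forces $c_2(E)\ge 0$. Your restriction-to-curves idea does appear, but as a tool to bound cohomology, not to transport $\Delta$ (which has no meaning on a curve).
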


We prove Theorem \ref{thm_Deligne-Mumford_2-dim_intro} by a method of Moriwaki which is generalized to surface Deligne-Mumford stacks, plus the calculation using orbifold Grothendieck-Riemann-Roch formula for the surface Deligne-Mumford stack. 
In order to prove the theorem we review the basic knowledge of coherent sheaves over a tame Deligne-Mumford stack $\sX$, the Frobenius morphisms and the basic properties of slope modified semistable torsion free sheaves on  $\sX$.  In particular we generalize one of  Langer's inequality involving maximal  and minimal modified slopes in the Harder-Narasimhan filtration of a torsion free sheaf $E$ to smooth Deligne-Mumford stacks. 

As the first application of the  Bogomolov-Gieseker inequality in Theorem \ref{thm_Deligne-Mumford_2-dim_intro}, 
the Bogomolov-Gieseker inequality for surface Deligne-Mumford stacks in characteristic zero holds by the standard method of taking limit. 
The second application is to prove the Bogomolov-Gieseker inequality  for rational parabolic semistable torsion free sheaves on $(X,D)$ by relating the rational parabolic sheaves on  $(X,D)$ to torsion free sheaves on the root stack $\sX$ associated with $(X,D)$. 
The result in Theorem  \ref{thm_Deligne-Mumford_2-dim_intro}  can also be used to construct Bridgeland stability conditions on surface Deligne-Mumford stacks. 

We generalize Langer's results (the four theorems in \cite[\S 3]{Langer})  for higher dimensional smooth Deligne-Mumford stacks $\sX$ for a special choice of generating sheaf such that the modified slope is equivalent to the orbifold slope of $\sX$. 
We also provide one restriction theorem for slope stable torsion free sheaves to a large degree divisor in $\sX$. 
The proof is similar to \cite[\S 3]{Langer},  and we put these arguments in the appendix. 
The restriction theorem will have applications to the reduction of the moduli space of Higgs sheaves on a surface or a surface Deligne-Mumford stack  to the moduli space of Higgs  bundles to a large degree curve inside the surface or the surface Deligne-Mumford stack. 

We also prove the Bogomolov inequality for semistable  Simpson Higgs sheaves on tame Deligne-Mumford stacks in Appendix \ref{appendix_B} generalizing the method in \cite{Langer2}.  We should point out the Higgs sheaves here are not the Higgs sheaves in \cite{TT1}, \cite{JK}, where the Bogomolov inequality may involve correction terms.

\subsection{Outline}
Here is the short outline of the paper.  In \S \ref{sec_preliminaries} we review the modified slope stability and Frobenius morphisms for torsion free sheaves on a tame projective Deligne-Mumford stack $\sX$, and prove some inequalities for torsion free sheaves on $\sX$.  We prove the Bogomolov-Gieseker inequality formula Theorem \ref{thm_Deligne-Mumford_2-dim_intro} in \S \ref{sec_BG_formula_2-dim}, and the Bogomolov inequality for  rational parabolic sheaves for $(X,D)$.  Finally in Appendix A we generalize the results in \cite[\S 3, \S 5]{Langer} to higher dimensional smooth tame Deligne-Mumford stacks, and joint with Hao Sun we prove the Bogomolov inequality for Simpson Higgs sheaves on tame Deligne-Mumford stacks in Appendix  \ref{appendix_B}. 

\subsection{Convention}
We work over an algebraically closed   field $\kappa$ of characteristic $p>0$ throughout of the paper.  All Deligne-Mumford stacks are tame. We denote by $\Gm$ the multiplicative group over $\kappa$.   We use Roman letter $E$ to represent a coherent sheaf on a tame projective Deligne-Mumford stack  $\sX$.

\subsection*{Acknowledgments}

The authors   would like to thank  A. Langer for useful email correspondences and valuable discussions.  This work is partially supported by  NSF and a Simons Foundation.


\section{Preliminaries on modified stability and some new results}\label{sec_preliminaries}

In this section we review the modified semistability for Deligne-Mumford surfaces, and Langer's notations for semistability in characteristic $p$.
Some new results of Shapherd-Barron for Deligne-Mumford surfaces are proved. 

\subsection{Notations}\label{sbsec_notations}

We fix some notations for a smooth projective  Deligne-Mumford stack $\sX$ of dimension $d$.  The Deligne-Mumford stack $\sX$  is called $tame$, if  the stabilizer groups  of the Deligne-Mumford stack $\sX$ are all linearly reductive groups. Equivalently this means the order of the stabilizer group at any geometric point of $\pi: \sX\to X$ is relatively prime to $p$.

Let $\sI$ be the index set of the components of the inertia stack 
$I\sX$ such that
$$I\sX=\bigsqcup_{g\in\sI}\sX_g.$$
We always use $\sX_0=\sX$ to represent the trivial component.  For example if $\sX=[Z/G]$ is a global  quotient stack, where $Z$ is a quasi-projective scheme and $G$ is a finite group acting diagonally on $Z$, then 
$I\sX=\bigsqcup_{(g)}[Z^g/C(g)]$.   Any component $\sX_g\subset \sX$ in the inertia stack $I\sX$ is a closed substack of $\sX$. 
We denote by $I\sX_1\subset I\sX$ be the substack of $I\sX$ consisting of components $\sX_g$ such that their codimension in $\sX$ is one. 
Let $\pr: I\sX\to \sX$ be the map from the inertia stack $I\sX$ to $\sX$. 

For $\sX$, we write 
$$H_{\CR}^*(\sX)=H^*(I\sX)=\bigoplus_{g\in\sI}H^*(\sX_g)$$
to be the Chen-Ruan cohomology with $\qq$-coefficients.  For any torsion free coherent sheaf $E$ on $\sX$, we use $c_i(E)$ to represent the Chern classes of $E$ on $\sX$, and $c_i(E)\in H^{2i}(\sX)$. 

On the component $\sX_g\subset I\sX$, at a point $(x,g)\in \sX_g$, let 
$$T_{x}\sX=\bigoplus_{0\leq f<1}\left(T_{x}\sX\right)_{g,f}$$
be the eigenspace decomposition of $T_{x}\sX$ with respect to the stabilizer action and $g$ acts on $\left(T_{x}\sX\right)_{g,f}$ by $e^{2\pi i f}$.

Let $E\in\Coh(\sX)$ be a coherent sheaf on $\sX$, we have an eigenbundle decomposition of $\pr^*E$ and on $\pr^*E|_{\sX_g}$ we have
$$\pr^*E|_{\sX_g}=\bigoplus_{0\leq f<1}(\pr^*E)_{g,f}$$
with respect to the action of the stabilizer of $\sX_g$, where the element $g$ acts on $(\pr^*E)_{g,f}$ by $e^{2\pi i f}$.  
Then the orbifold Chern character is:
\begin{equation}\label{eqn_orbifold_Chern_E}
\widetilde{\Ch}(E)=\bigoplus_{g\in \sI}\sum_{0\leq f<1}e^{2\pi i f}\Ch((\pr^*E)_{g,f}),
\end{equation}
where $\Ch$ is the general Chern character.  Let $l_{g,f}$ be the rank of $(\pr^*E)_{g,f}$.   The orbifold Todd class of $T\sX$ is given by 
\begin{equation}\label{eqn_orbifold_Todd_TX}
\widetilde{\Td}(T\sX)=\bigoplus_{g\in \sI}\prod_{\substack{0\leq f<1\\
1\leq i\leq r_{g,f}}}\frac{1}{1-e^{-2\pi i f}e^{-x_{g,f,i}}}\prod_{f=0}\frac{x_{g,0,i}}{1-e^{-x_{g,0,i}}},
\end{equation}
where $(\pr^*T\sX)_{g,f}$ has rank $r_{g,f}$ and $x_{g,f,i}$ are Chern roots. 

For any coherent sheaf $E$ on $\sX$, orbifold Riemann-Roch theorem \cite{Toen} gives:
\begin{equation}\label{eqn_orbifold_RR}
\chi(\sX, E)=\int_{I\sX}\widetilde{\Ch}(E)\cdot \widetilde{\Td}(T\sX).
\end{equation}

\subsection{Modified stability}\label{subsec_modified_stability}

Let $\sX$ be a smooth tame projective Deligne-Mumford stack of dimension $d$.  We  choose the polarization $\sO_X(1)$ on its coarse moduli space $\pi: \sX\to X$.
Let $H:=c_1(\sO_X(1))$.
Recall from \cite{Nironi}, 

\begin{defn}\label{defn_generating_sheaf}
A locally free sheaf $\Xi$ on $\sS$ is $p$-very ample if  for every geometric point of $\sS$ the representation of the stabilizer group at that point contains every irreducible representation of the stabilizer group.  We call $\Xi$ a generating sheaf. 
\end{defn}

Let $\Xi$ be a locally free (generating) sheaf  on $\sX$. We define a functor
$$F_{\Xi}: D\Coh_{\sX}\to D\Coh_{X}$$ by
$$F\mapsto \pi_*\cHom_{\sO_X}(\Xi, F)$$
and a functor 
$$G_{\Xi}: D\Coh_{X}\to D\Coh_{\sX}$$ by
$$F\mapsto \pi^*F\otimes \Xi.$$
From \cite{OS03}, the functor $F_{\Xi}$ is exact since the dual $\Xi^{\vee}$ is locally free and the pushforward $p_*$ is exact.  The functor $G_{\Xi}$ is not exact unless $p$ is flat. For instance, if $p$ is a flat gerbe or a root stack, it is flat.

Let us fix a generating sheaf  $\Xi$ on $\sX$. We call the pair $(\Xi, \sO_S(1))$ a polarization of $\sX$. 
Let $E$ be a coherent sheaf on $\sX$, we define the support of $E$ to be the closed substack associated with the ideal
$$0\to \sI\to \sO_{\sX}\to \cEnd_{\sO_{\sX}}(F).$$
So $\dim(\Supp F)$ is the dimension of the substack associated with the ideal $\sI\subset \sO_{\sX}$ since $\sX$ is a Deligne-Mumford stack. 
A pure sheaf of dimension $d$ is a coherent sheaf $E$ such that for every non-zero subsheaf $E^\prime$ the support of $E^\prime$ is of pure dimension $d$. 
For any coherent sheaf $E$, we have the torsion filtration:
$$0\subset T_0(E)\subset \cdots \subset T_l(E)=E$$
where every $T_i(E)/T_{i-1}(E)$ is pure of dimension $i$ or zero, see \cite[\S 1.1.4]{HL}.

\begin{defn}\label{defn_Gieseker}
The modified Hilbert polynomial of a coherent sheaf $E$ on $\sX$ is defined as:
$$H_{\Xi}(E,m)=\chi(\sX, E\otimes\Xi^{\vee}\otimes \pi^*\sO_{X}(m))
=H(F_{\Xi}(E)(m))=\chi(X, F_{\Xi}(E)(m)).$$
\end{defn}

Let $E$ be of dimension $l$, then we can write:
$$H_{\Xi}(E,m)=\sum_{i=0}^{l}\alpha_{\Xi, i}(E)\frac{m^i}{i!}$$
which is induced by the case of schemes.  The modified Hilbert polynomial is additive on short exact sequences since the functor $F_{\Xi}$ is exact.  If we don't choose the generating sheaf $\Xi$, the Hilbert polynomial $H$ on $\sX$ will be the same as the Hilbert polynomial on the coarse moduli space $S$.  
The {\em reduced modified Hilbert polynomial} for the pure sheaf $E$ is defined as 
$$h_{\Xi}(E)=\frac{H_{\Xi}(E)}{\alpha_{\Xi, d}(E)}.$$
Let $E$ be a pure coherent sheaf.  We call $E$ Gieseker  semistable if for every proper subsheaf $E^\prime\subset E$, 
$$h_{\Xi}(E^\prime)\leq h_{\Xi}(E).$$
We call $E$ stable if $\leq$ is replaced by $<$ in the above inequality.


\begin{defn}(\cite[Definition 3.13]{Nironi})\label{defn_modified_slope_stability}
 We define the slope of  $E$ by 
 $$\mu_{\Xi}(E)=\frac{\alpha_{\Xi, l-1}(E)}{\alpha_{\Xi, l}(E)}.$$
 Then $E$ is
modified slope (semi)stable if for every proper subsheaf $F\subset E$, 
$$\mu_{\Xi}(F)(\leq) < \mu_{\Xi}(E).$$
\end{defn}
The notion of $\mu$-stability and semistability is related to the Gieseker stability and semistability in the same way as schemes, i.e.,
$$\mu-\text{stable}\Rightarrow \text{Gieseker stable}\Rightarrow \text{Gieseker semistable}\Rightarrow \mu-\text{semistable}$$

\begin{rmk}\label{rmk_stability_quotient}
Recall that  $I\sX_1\subset I\sX$ is the substack of $I\sX$ consisting of components such that the codimension of $\sX_g\subset \sX$  is one. 
If our Deligne-Mumford stack $\sX$ is a global quotient stack, which means $\sX=[Z/G]$ where $Z$ is a quasi-projective scheme and $G$ is a group scheme acting diagonally.   Assume that we can choose the generating sheaf $\Xi$ on $\sX$ such that its restriction on any component in $I\sX_1$ is a sum of  locally free sheaves of  the same rank. If the sheaf $E$ has dimension $d$, then    \cite[Proposition 3.18]{Nironi} shows that 
$$\deg_{\Xi}(E)=\frac{1}{\rk(\Xi)}\alpha_{\Xi, d-1}(E)-\frac{\rk(E)}{\rk(\Xi)}\alpha_{\Xi,d-1}(\sO_{\sX}).$$
Here $\alpha_{\Xi, d-1}(E)=\int_{\sX}c_1(E)\cdot H$. 
\end{rmk}

\subsection{Notations in characteristic $p$}\label{subsec_char_p_notations_Shepherd-Barron}

We first fix some notations in characteristic $p>0$.  We assume $\sX$ a smooth tame  projective Deligne-Mumford stack of dimension $d$.  Let 
$$\sX^{(i)}=\sX\times_{\spec \kappa}\spec(\kappa)$$
be the Deligne-Mumford stack obtained from  $\sX$ by applying the $i$-th power of absolute Frobenius morphism on $\spec \kappa$. 
The geometric Frobenius morphism $F_g: \sX\to \sX^{(1)}$ is defined by the fiber product $\sX^{(1)}=\sX\times_{\spec \kappa}\spec(\kappa)$ and the absolute Frobenius morphism $F: \sX\to \sX$.

We review a bit on the coherent sheaves with connections in \cite{Katz}, where the theory is for schemes, but in \'etale topology it works for Deligne-Mumford stacks. 
Following N.Katz \cite{Katz}, a connection on a quasi-coherent sheaf $E$ is a homomorphism 
$$\nabla: E\to E\otimes \Omega_{\sX}$$
such that $\nabla(ge)=g\nabla(e)+e\otimes dg$ where $g$ and $e$ are sections of $\sO_{\sX}$ and $E$ respectively over an open subset of $\sX$, and $dg$ denotes the image of $g$ under the canonical exterior differentiation $d: \sO_{\sX}\to \Omega_{\sX}$. 
The kernel $K:=K(E,\nabla)$ of $\nabla$ is the $\sO_{\sX}$-linear map
$K=\nabla_1\circ\nabla: E\to E\otimes \Omega_{\sX}^{2}$ where $\nabla_1: E\otimes \Omega_{\sX}\to E\otimes \Omega_{\sX}^{2}$ is the map defined by
$\nabla_1(e\otimes \omega)=e\otimes d\omega-\nabla(e)\wedge \omega$. 
A connection $\nabla: E\to E\otimes \Omega_{\sX}$ is $integrable$ if its kernel 
$K=0$. 
Then let $\MIC(\sX)$ be the category of pairs $(E,\nabla)$ where $E$ is a quasi-coherent $\sO_{\sX}$-module and $\nabla$ is an integrable connection. 

The $p$-curvature of an integrable connection  $\nabla$ is given by  a morphism 
$$\psi: \mbox{Der}_{\kappa}(\sX)\to \End(E),$$
by 
$$\psi(D)=(\nabla(D))^{p}-\nabla(D^p),$$
where $D$ is a section of $\mbox{Der}_{\kappa}(\sX)$. 

For the geometric Frobenius morphism $F_g: \sX\to \sX^{(1)}$, 
in \cite[Theorem 5.1]{Katz},  N. Katz constructed a canonical connection $\nabla_{\can}$ on $F_g^*E$ which is determined in the following Cartier theorem:

There is an equivalence of categories between the category of quasi-coherent sheaves on $\sX^{(1)}$ and the full subcategory of  $\MIC(\sX)$ consisting of objects $(E,\nabla)$ whose $p$-curvature is zero. The equivalence is given by:
$E\mapsto (F_g^*E, \nabla_{\can})$ and $(E,\nabla)\mapsto  E^{\nabla}$. Here the unique $\nabla_{\can}$ on $F_g^*E$ makes 
$E\cong (F_g^* E)^{\nabla_{\can}}$, and for a sheaf $E$, $E^{\nabla}$ is the kernel $\ker(\nabla)$ of $\nabla$.

We first have a result of generating sheaves under Frobenius pullbacks:

\begin{lem}
Let $F_g: \sX\to \sX^{(1)}$ be the geometric Frobenius morphism. Then a locally free sheaf  $\Xi$ is a generating sheaf on $\sX^{(1)}$ if and only if $F_g^*\Xi$ is a generating sheaf on $\sX$.
\end{lem}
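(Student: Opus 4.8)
The plan is to use the characterization of a generating sheaf in terms of representations of the stabilizer groups (Definition \ref{defn_generating_sheaf}) together with the fact that the geometric Frobenius $F_g: \sX \to \sX^{(1)}$ is a finite flat morphism which induces an isomorphism on stabilizer groups at each geometric point. First I would recall that $F_g$ fits into a Cartesian square with the absolute Frobenius on $\spec \kappa$, hence it is affine, faithfully flat and a homeomorphism on the underlying topological spaces (in the étale/fppf sense appropriate to stacks); in particular, for a geometric point $x$ of $\sX$ with stabilizer group $G_x$, the image point $F_g(x)$ of $\sX^{(1)}$ has the same stabilizer group $G_x$, and tameness ($p \nmid |G_x|$) is preserved. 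Moreover, pulling back along $F_g$ is compatible with the étale-local presentation $[U/G_x]$ of $\sX$ near $x$: on such a chart $F_g$ is again (relative) Frobenius, which is $G_x$-equivariant and does not change the $G_x$-action on fibers of a locally free sheaf.

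The key step is then the following local statement: if $\Xi$ is locally free on $\sX^{(1)}$, then the $G_x$-representation which is the fiber of $F_g^*\Xi$ at $x$ is isomorphic (as a $G_x$-representation) to the fiber of $\Xi$ at $F_g(x)$. Granting this, the equivalence is immediate: $\Xi$ contains every irreducible $G_x$-representation in its fiber over $F_g(x)$ for every geometric point if and only if $F_g^*\Xi$ contains every irreducible $G_x$-representation in its fiber over $x$ for every geometric point, which is exactly the assertion that $\Xi$ is a generating sheaf on $\sX^{(1)}$ iff $F_g^*\Xi$ is one on $\sX$. One also needs that $F_g^*\Xi$ is again locally free, which is clear since $F_g$ is flat (or simply since pullback of locally free is locally free).

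To prove the local statement I would argue as follows. Choose an étale chart $[\spec A / G_x] \to \sX^{(1)}$ around $F_g(x)$, so that $\Xi$ corresponds to a finitely generated projective $A$-module $M$ with semilinear $G_x$-action, and the fiber at $F_g(x)$ is $M \otimes_A \kappa$ with its induced linear $G_x$-action. The geometric Frobenius pulls this chart back to $[\spec(A \otimes_{\kappa,\mathrm{Frob}} \kappa)/G_x] \cong [\spec A^{(1)}/G_x]$ over $\sX$, under which $F_g^*\Xi$ corresponds to $M^{(1)} = M \otimes_{\kappa,\mathrm{Frob}}\kappa$ with the $G_x$-action $g \otimes 1$; since the Frobenius twist only affects the $\kappa$-structure and not the $G_x$-action, the fiber of $F_g^*\Xi$ at $x$ is $M^{(1)} \otimes_{A^{(1)}} \kappa \cong (M\otimes_A \kappa) \otimes_{\kappa,\mathrm{Frob}}\kappa$ with $G_x$ acting through the first factor, i.e.\ it is the Frobenius twist of the $G_x$-representation $M\otimes_A\kappa$. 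Finally, because $\kappa$ is algebraically closed and $p \nmid |G_x|$, every irreducible $\kappa[G_x]$-module is defined over $\ff_p$ up to the effect of the twist — more precisely, Frobenius twisting permutes the isomorphism classes of irreducible $\kappa[G_x]$-representations (it is induced by an automorphism of the semisimple algebra $\kappa[G_x]$), so "contains every irreducible" is invariant under it. This shows the fiber conditions defining a generating sheaf match up under $F_g$, completing the proof.

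The main obstacle I expect is the bookkeeping in this last paragraph: making precise that Frobenius twisting of $\kappa[G_x]$-representations is a bijection on irreducibles (for a finite group of order prime to $p$, using that $\kappa$ is algebraically closed so $\kappa[G_x] \cong \prod_i \mathrm{Mat}_{n_i}(\kappa)$ and Frobenius acts as a ring automorphism permuting the factors), and checking carefully that the étale-local presentation of $\sX^{(1)}$ pulls back along $F_g$ to an étale-local presentation of $\sX$ with the $G_x$-action unchanged. Everything else — flatness of $F_g$, preservation of tameness, exactness/stability properties — is formal. An alternative that sidesteps the representation-theoretic twist is to observe that $F_g$ is a finite flat representable morphism inducing isomorphisms on all stabilizer groups and on residual gerbes, and to invoke a base-change/descent argument showing that "$p$-very ampleness" (Definition \ref{defn_generating_sheaf}) is a condition that can be checked étale-locally and is insensitive to such a morphism; but the direct fiberwise argument above is cleaner to write out.
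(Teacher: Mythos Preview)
Your proposal is correct and follows essentially the same approach as the paper: verify the fiberwise representation-theoretic condition of Definition~\ref{defn_generating_sheaf} and invoke tameness. The paper's own proof is a two-sentence sketch asserting that the pullback contains all irreducibles because $p$ is coprime to the stabilizer orders; your explicit treatment of the Frobenius twist on $\kappa[G_x]$-representations and why it is a bijection on irreducibles is precisely the detail the paper leaves implicit.
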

\begin{proof}
That $\Xi$ is a generating sheaf  means it contains all the irreducible representations of the stabilizer group of $\sX^{(1)}$.  Then the pullback 
$F_g^*\Xi$ contains all the irreducible representations of the stabilizer group of $\sX$ since $p$ is prime to all the orders of the stabilizer groups of $\sX$. 
\end{proof}

Thus from the Cartier's theorem above, the following is a generalization of \cite[Proposition 2.2]{Langer}:
\begin{prop}\label{prop_geometric_Frobenius_semistability}
A coherent sheaf $E$ on $\sX^{(1)}$ is slope semistable with respect to $(H, \Xi)$ if and only of $F_g^*E$ is slope semistable with respect to 
$(F_g^{*}H, F_g^*\Xi)$.
\end{prop}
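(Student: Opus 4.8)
The plan is to prove the two implications separately: one direction is formal, while the other rests on the Cartier equivalence recalled just above.

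\emph{Suppose $F_g^{*}E$ is slope semistable.} Since $\sX$ is smooth and tame, the geometric Frobenius $F_g$ is finite and faithfully flat, so $F_g^{*}$ is exact and carries a nonzero proper subsheaf $F\subsetneq E$ to a nonzero proper subsheaf $F_g^{*}F\subsetneq F_g^{*}E$. Using the orbifold Grothendieck-Riemann-Roch formula one checks that $F_g^{*}$ compares the modified slopes in an order-preserving way: the rank is unchanged, $F_g^{*}H=pH$ and $c_1$ is multiplied by $p$, so $\alpha_{\Xi,d}$ is multiplied by $p^{d}$ and $\alpha_{\Xi,d-1}$ is sent to a fixed positive combination of $\alpha_{\Xi,d-1}$ and $\alpha_{\Xi,d}$. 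Hence $\mu_{\Xi}(F)>\mu_{\Xi}(E)$ would force $\mu_{F_g^{*}\Xi}(F_g^{*}F)>\mu_{F_g^{*}\Xi}(F_g^{*}E)$; so no such $F$ exists and $E$ is slope semistable.

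\emph{Suppose $E$ is slope semistable; we show $F_g^{*}E$ is too.} I would argue by contradiction: if $F_g^{*}E$ were not semistable, let $G\subset F_g^{*}E$ be its maximal destabilizing subsheaf, so $G$ is $\mu$-semistable with modified slope $\mu_{F_g^{*}\Xi}(G)=\mu_{\max}(F_g^{*}E)>\mu_{F_g^{*}\Xi}(F_g^{*}E)$. The key claim is that $G$ is horizontal for the canonical connection, $\nabla_{\can}(G)\subseteq G\otimes\Omega_{\sX}$. Granting this, the restriction $(G,\nabla_{\can}|_{G})$ still has $p$-curvature zero (its $p$-curvature is the restriction of that of $(F_g^{*}E,\nabla_{\can})$, which vanishes), so by the Cartier equivalence $G$ descends to a subsheaf $G_{0}\subseteq E$ on $\sX^{(1)}$ with $F_g^{*}G_{0}=G$; the slope comparison of the first part then gives $\mu_{\Xi}(G_{0})>\mu_{\Xi}(E)$, contradicting semistability of $E$.

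It remains to prove the horizontality of $G$, and this is the step I expect to be the main obstacle. Composing the inclusion with $\nabla_{\can}$ and the projection produces a map
$$\varphi\colon G\hookrightarrow F_g^{*}E\xrightarrow{\ \nabla_{\can}\ }F_g^{*}E\otimes\Omega_{\sX}\twoheadrightarrow (F_g^{*}E/G)\otimes\Omega_{\sX},$$
and the Leibniz rule makes $\varphi$ an $\sO_{\sX}$-linear morphism: for local sections $g$ of $\sO_{\sX}$ and $s$ of $G$ the extra term $s\otimes dg$ lies in $G\otimes\Omega_{\sX}$ and dies in the quotient, so $\varphi(gs)=g\,\varphi(s)$. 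Horizontality of $G$ is the statement $\varphi=0$. Since $G$ is $\mu$-semistable of modified slope $\mu_{\max}(F_g^{*}E)$, it is enough to know that every Harder-Narasimhan quotient of $(F_g^{*}E/G)\otimes\Omega_{\sX}$ has strictly smaller modified slope, i.e.\ that $\mu_{\max}\!\bigl((F_g^{*}E/G)\otimes\Omega_{\sX}\bigr)<\mu_{\max}(F_g^{*}E)$. In characteristic $p$ this is not a formal consequence of additivity of maximal slopes (tensoring even with the fixed sheaf $\Omega_{\sX}$ can raise $\mu_{\max}$ by more than $\mu_{\max}(\Omega_{\sX})$), so this is precisely the point where one invokes the positive-characteristic slope inequalities of \cite{Langer}, bounding $\mu_{\max}$ and $\mu_{\min}$ of a tensor product in terms of those of the factors, in the modified-slope form on $\sX$ developed in the appendix. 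Locally in the \'etale topology on $\sX$ both the Katz-Cartier theory and these inequalities reduce to the corresponding scheme statements, so the only genuinely new work is the bookkeeping of the modified degree and of the codimension-one inertia contributions of $I\sX_1$ under pullback by $F_g$.
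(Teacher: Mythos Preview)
Your first direction is fine. The second direction has a genuine gap: the horizontality claim for the maximal destabilizing $G\subset F_g^*E$ is \emph{false} in general. This is precisely the content of Lemma~\ref{lem_SB1} (after Shepherd-Barron): when $E$ is semistable but $F^*E$ is not, the induced maps $E_i\to(F^*E/E_i)\otimes\Omega_{\sX}$ from the Harder--Narasimhan pieces are \emph{nonzero}. The tensor-product slope bounds of \cite{Langer} that you invoke go the wrong way for your purpose: they bound $\mu_{\max}$ of a tensor with $\Omega_{\sX}$ from above by adding a fixed \emph{positive} constant, which is exactly what permits $\varphi\neq0$ while still controlling the gap $\mu_{\max}-\mu_{\min}$ (cf.\ Corollary~\ref{cor_alphaE}); they cannot force $\varphi=0$. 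Indeed, the existence of semistable but not strongly semistable sheaves---the whole reason the paper introduces $L_{\Xi,\max}$, $L_{\Xi,\min}$ and $\alpha_{\Xi}(E)$---shows that the literal implication ``$E$ semistable $\Rightarrow$ $F_g^*E$ semistable'' fails.

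The paper's one-line argument is much more modest and should be read through the Cartier equivalence recalled immediately before the proposition: that equivalence puts subsheaves of $E$ on $\sX^{(1)}$ in bijection with $\nabla_{\can}$-invariant subsheaves of $F_g^*E$, and under this bijection modified slopes scale by a fixed $p$-power. Hence $E$ is slope semistable iff $(F_g^*E,\nabla_{\can})$ is slope semistable \emph{as a sheaf with connection}, i.e.\ the inequality is tested only against $\nabla_{\can}$-invariant subsheaves. That is the statement actually used downstream (in the proof of Lemma~\ref{lem_SB1} via Lemma~\ref{lem_E_nabla}), and it requires nothing beyond the categorical equivalence and the slope-scaling observation.
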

\begin{proof}
This is from the modified slope of $E$ is the product of some $p$-th power with the modified  slope of $F_g^*E$.
\end{proof}

The following lemma is the generalization of \cite[Lemma 2.3]{Langer}.
We recall that a sheaf  $E$ is $\nabla$-semistable if the inequality of modified slopes is satisfied for all nonzero 
$\nabla$-preserved subsheaves of $E$. 
\begin{lem}\label{lem_E_nabla}
Consider a torsion free sheaf $E$ on $\sX$ with a $\kappa$-connection $\nabla$,  and assume that $E$ is $\nabla$-semistable. 
Let $0=E_0\subset E_1\subset\cdots\subset E_m=E$ be the usual Harder-Narasimhan filtration, then the induced morphisms
$E_i\to (E/E_i)\otimes \Omega_{\sX}$ are nonzero $\sO_{\sX}$-morphisms. 
\end{lem}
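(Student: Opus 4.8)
The plan is to argue by contradiction: suppose some induced map $\bar\nabla_i\colon E_i\to (E/E_i)\otimes\Omega_{\sX}$ is zero. Then $\nabla(E_i)\subseteq E_i\otimes\Omega_{\sX}$, i.e.\ $E_i$ is a $\nabla$-preserved subsheaf of $E$. Since $E$ is $\nabla$-semistable, this forces $\mu_\Xi(E_i)\leq\mu_\Xi(E)$. On the other hand, $E_i$ is the $i$-th term of the Harder--Narasimhan filtration of $E$ with respect to the modified slope, so for $1\le i\le m-1$ we have the strict inequality $\mu_\Xi(E_i)>\mu_\Xi(E)$ (the HN filtration has strictly decreasing slopes of the graded pieces, and the slope of $E_i$ is a weighted average of the slopes $\mu_\Xi(E_1)>\cdots>\mu_\Xi(E_i)$, each of which exceeds $\mu_\Xi(E_{i+1}/E_i)\ge\cdots$, hence $\mu_\Xi(E_i)>\mu_\Xi(E)$). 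This is a contradiction, so each $\bar\nabla_i$ is nonzero for $i=1,\dots,m-1$; for $i=0$ and $i=m$ the statement is vacuous.

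The only point that needs care is verifying that the Leibniz rule forces a $\nabla$-image landing in $E_i\otimes\Omega_{\sX}$ precisely when $\bar\nabla_i=0$, and that this subsheaf is genuinely "$\nabla$-preserved" in the sense used to define $\nabla$-semistability. This is formal: the composite $E_i\hookrightarrow E\xrightarrow{\nabla}E\otimes\Omega_{\sX}\twoheadrightarrow(E/E_i)\otimes\Omega_{\sX}$ is $\sO_{\sX}$-linear (the Leibniz correction terms $e\otimes dg$ already lie in $E_i\otimes\Omega_{\sX}$ since $e\in E_i$), so its vanishing is exactly the condition $\nabla(E_i)\subseteq E_i\otimes\Omega_{\sX}$. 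On a Deligne--Mumford stack this can be checked on an \'etale presentation, where it reduces to the scheme case of \cite[Lemma 2.3]{Langer}; the modified slope $\mu_\Xi$ behaves formally like the ordinary slope for these purposes, and torsion-freeness guarantees the HN filtration exists and has the strict slope inequalities used above.

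I expect the main (minor) obstacle to be purely bookkeeping: making sure the argument is stated for the \emph{modified} slope $\mu_\Xi$ and that "$\nabla$-semistable" as recalled just before the lemma is applied to the subsheaf $E_i$, whose modified slope is strictly larger than $\mu_\Xi(E)$. There is no serious geometric content beyond the Leibniz-rule computation and the standard numerical property of Harder--Narasimhan filtrations; the \'etale-local nature of connections on tame Deligne--Mumford stacks, already invoked in the discussion of $\MIC(\sX)$ above, lets the whole argument descend from the scheme case.
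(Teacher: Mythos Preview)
Your proposal is correct and follows essentially the same approach as the paper: argue by contradiction that if some $\bar\nabla_i$ vanishes then $E_i$ is a $\nabla$-preserved subsheaf whose modified slope strictly exceeds $\mu_\Xi(E)$, contradicting $\nabla$-semistability. The paper's proof is a single terse sentence to this effect, whereas you have carefully spelled out the $\sO_{\sX}$-linearity from the Leibniz rule, the strict HN inequality $\mu_\Xi(E_i)>\mu_\Xi(E)$, and the vacuous edge cases $i=0,m$; this added precision is welcome and does not depart from the paper's route.
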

\begin{proof}
If the induced morphisms
$E_i\to (E/E_i)\otimes \Omega_{\sX}$ are zero, then it induces a  Harder-Narasimhan filtration for $\nabla$-connections which contradicts the 
$\nabla$-semistability. 
\end{proof}

Now we fix some notations for the slopes of a coherent sheaf $E$ on $\sX$ with respect to the polarization $(\Xi, H)$.   Let 
$$0=E_0\subset E_1\subset \cdots \subset E_m:=E$$
be the Harder-Narasimhan filtration of $E$ with respect to modified slope stability.  We denote by 
$\mu_{\Xi, \max}(E), \mu_{\Xi, \min}(E)$ the maximal and minimal modified slope of $E$ respectively.  We let
$$L_{\Xi,\max}(E)=\lim_{k\to \infty}\frac{\mu_{\Xi,\max}((F^k)^*E)}{p^k};\quad  
L_{\Xi,\min}(E)=\lim_{k\to \infty}\frac{\mu_{\Xi,\min}((F^k)^*E)}{p^k}.$$

The sequence $\frac{\mu_{\Xi,\max}((F^k)^*E)}{p^k}$ is weakly increasing and $\frac{\mu_{\Xi,\min}((F^k)^*E)}{p^k}$ is weakly decreasing. 
Also $L_{\Xi,\max}(E)\geq \mu_{\max}(E)$ and $L_{\Xi,\min}(E)\leq \mu_{\min}(E)$. 
Suppose that $E$ is slope semistable, then $L_{\Xi,\max}(E)= \mu_{\Xi}(E)$ (or $L_{\Xi,\min}(E)= \mu_{\Xi}(E)$) if and only if $E$ is strongly slope semistable. 

First we have:
\begin{lem}\label{lem_SB1}
Let $E$ be a slope semistable torsion-free sheaf on $\sX$ such that $F^*E$ is unstable.  Then in the  Harder-Narasimhan filtration
$0=E_0\subset E_1\subset \cdots \subset E_m:=F^*E$, the $\sO_{\sX}$-morphisms
$E_i\to (F^*E/E_i)\otimes \Omega_{\sX}$ induced by $\nabla_{\can}$ are nonzero. 
\end{lem}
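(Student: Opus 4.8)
The plan is to reduce Lemma \ref{lem_SB1} to Lemma \ref{lem_E_nabla} applied to the pair $(F^*E, \nabla_{\can})$, where $\nabla_{\can}$ is the canonical connection supplied by Katz's theorem for the (geometric) Frobenius morphism. The key observation is that $F^*E$ carries the canonical connection $\nabla_{\can}$ whose $p$-curvature vanishes, and — by Cartier's equivalence recalled above — the $\nabla_{\can}$-preserved subsheaves of $F^*E$ are precisely the sheaves of the form $F^*(E')$ for subsheaves $E'\subset E$. So the first step is to verify that $F^*E$ is \emph{$\nabla_{\can}$-semistable}: any $\nabla_{\can}$-preserved subsheaf of $F^*E$ is a Frobenius pullback $F^*E'$, and by Proposition \ref{prop_geometric_Frobenius_semistability} (in its absolute-Frobenius form, which follows from the geometric-Frobenius version composed with base change on $\spec\kappa$) we have $\mu_\Xi(F^*E') = p\cdot\mu_{\Xi}(E')$ and $\mu_\Xi(F^*E) = p\cdot\mu_\Xi(E)$; since $E$ is modified slope semistable, $\mu_\Xi(E')\le\mu_\Xi(E)$, hence $\mu_\Xi(F^*E')\le\mu_\Xi(F^*E)$. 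Thus $F^*E$ is $\nabla_{\can}$-semistable even though (by hypothesis) it is not semistable as a plain sheaf.

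The second step is then immediate: apply Lemma \ref{lem_E_nabla} to $(F^*E,\nabla_{\can})$. That lemma takes the ordinary Harder-Narasimhan filtration $0=E_0\subset E_1\subset\cdots\subset E_m = F^*E$ with respect to modified slope stability and concludes that each induced $\sO_{\sX}$-morphism $E_i\to (F^*E/E_i)\otimes\Omega_{\sX}$ is nonzero — which is exactly the assertion of Lemma \ref{lem_SB1}. The contrapositive argument inside Lemma \ref{lem_E_nabla} is: if some $E_i\to (F^*E/E_i)\otimes\Omega_{\sX}$ vanished, then $E_i$ would be $\nabla_{\can}$-preserved, and because the Harder-Narasimhan filtration strictly drops slope ($\mu_\Xi(E_i)>\mu_\Xi(F^*E/E_i)\ge\mu_\Xi(F^*E)$ is false — rather $\mu_{\Xi,\min}(E_i)>\mu_{\Xi,\max}(F^*E/E_i)$, so $\mu_\Xi(E_i)>\mu_\Xi(F^*E)$ since $F^*E$ is unstable), this $\nabla_{\can}$-preserved subsheaf destabilizes, contradicting $\nabla_{\can}$-semistability.

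I expect the main technical point — the only place where "tame Deligne-Mumford stack" genuinely enters — to be the verification that Katz's theorem and the Cartier equivalence are available in this setting. The excerpt already flags that Katz's theory "in \'etale topology works for Deligne-Mumford stacks," so I would invoke that: étale-locally $\sX$ looks like a scheme, $\Omega_{\sX}$ and $\mbox{Der}_\kappa(\sX)$ are the usual sheaves, the canonical connection $\nabla_{\can}$ on $F_g^*E$ and the statement that its $p$-curvature is zero are étale-local and hence descend, and the characterization of horizontal subsheaves as Frobenius pullbacks is likewise étale-local. Tameness guarantees that $p$ is prime to the stabilizer orders, which is what makes the relevant pushforwards exact and the Frobenius well-behaved on generating sheaves (as in the Lemma preceding Proposition \ref{prop_geometric_Frobenius_semistability}); it does not otherwise interfere with the connection-theoretic input. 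Once that foundational compatibility is granted, the proof is a two-line deduction from Lemma \ref{lem_E_nabla}, so the genuine content lies entirely in identifying $F^*E$ with a $\nabla_{\can}$-semistable object and not in any new computation.
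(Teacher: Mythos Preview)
Your proposal is correct and follows exactly the route the paper indicates: the paper's proof is a one-line citation of Proposition~\ref{prop_geometric_Frobenius_semistability} and Lemma~\ref{lem_E_nabla} (together with a reference to Shepherd-Barron's original argument for schemes), and what you have written is precisely the unpacking of that citation --- Cartier's equivalence identifies the $\nabla_{\can}$-invariant subsheaves of $F^*E$ with Frobenius pullbacks, the slope scaling by $p$ (the content of Proposition~\ref{prop_geometric_Frobenius_semistability}) then yields $\nabla_{\can}$-semistability of $F^*E$, and Lemma~\ref{lem_E_nabla} finishes.
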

\begin{proof}
Note that this is Proposition 1 in \cite{SB1} where Shepherd-Barron dealt with general slope stability for schemes. 
The result for modified slope stability  is from Proposition \ref{prop_geometric_Frobenius_semistability} and Lemma \ref{lem_E_nabla}.
\end{proof}

For any divisors $H, A$, $H^i\cdot A^{d-i}$ denotes the integration $\int_{\sX}H^i\cdot A^{d-i}$. Here 
We still denote $H$ and $A$ for $\pi^*H, \pi^*A$ where $\pi: \sX\to X$ is the map to its coarse moduli space. 
\begin{cor}\label{cor_alphaE}
Let us define
$$\alpha_{\Xi}(E):=\max(L_{\Xi,\max}(E)-\mu_{\Xi, \max}(E), \mu_{\Xi, \min}(E)-L_{\Xi,\min}(E)).$$
Let $L\in \Pic(\sX)$ be a $\pi$-ample line bundle on $\sX$ such that there exists $m\in\zz_{>0}$ making 
$\bigoplus_{i=1}^{m-1}L^{\otimes i}$ a generating sheaf.    
Let $A$ be a nef divisor for $X$,  such that $\pi_{*}(T_{X}\otimes\sum_{k=1}^{m-1}(L^{k}))(A)$ is globally generated, 
we have 
$$\alpha_{\Xi}(E)\leq \frac{\rk(E)-1}{p-1} \left(\max_{1\leq k\leq m-1}(\mu_{\Xi}(A\otimes L^{k}))\right),$$
where 
$$\max_{1\leq k\leq m-1}(\mu_{\Xi}(A\otimes L^{k}))=\max\{\mu_{\Xi}(A\otimes L), \mu_{\Xi}(A\otimes L^{2}), \cdots, \mu_{\Xi}(A\otimes L^{m-1})\}.$$
\end{cor}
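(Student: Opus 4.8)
The plan is to mirror Langer's proof of the analogous statement \cite[Corollary 3.4(?)]{Langer}, adapting each step to the stacky setting using the results already established in this section. First I would reduce to controlling a single term, say $L_{\Xi,\max}(E) - \mu_{\Xi,\max}(E)$; the bound for $\mu_{\Xi,\min}(E) - L_{\Xi,\min}(E)$ follows by applying the first bound to the dual sheaf $E^\vee$ (using that $\mu_{\Xi,\min}(E) = -\mu_{\Xi,\max}(E^\vee)$ and $L_{\Xi,\min}(E) = -L_{\Xi,\max}(E^\vee)$ for torsion free $E$ on the smooth stack $\sX$, together with $\rk(E^\vee) = \rk(E)$). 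So it suffices to bound $L_{\Xi,\max}(E) - \mu_{\Xi,\max}(E)$.

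Next I would telescope: for each $k\ge 0$ write
$$\frac{\mu_{\Xi,\max}((F^{k+1})^*E)}{p^{k+1}} - \frac{\mu_{\Xi,\max}((F^{k})^*E)}{p^{k}} = \frac{1}{p^{k+1}}\Big(\mu_{\Xi,\max}((F^{k+1})^*E) - p\,\mu_{\Xi,\max}((F^{k})^*E)\Big),$$
and sum over $k$ to express $L_{\Xi,\max}(E) - \mu_{\Xi,\max}(E)$ as $\sum_{k\ge 0} p^{-(k+1)}\delta_k$ where $\delta_k := \mu_{\Xi,\max}(F^*G_k) - p\,\mu_{\Xi,\max}(G_k)$ and $G_k := (F^k)^*E$. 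The core estimate is then: for any slope semistable torsion free $G$ with maximal destabilizing piece of rank $r'\le \rk(E)$, the quantity $\mu_{\Xi,\max}(F^*G) - p\,\mu_{\Xi,\max}(G)$ is at most $(r'-1)\cdot \max_{1\le k\le m-1}\mu_\Xi(A\otimes L^k)$. Applying this to each $G_k$ (whose ranks of destabilizing subsheaves are all $\le \rk(E)$) and summing the geometric series $\sum_{k\ge 0} p^{-(k+1)}(r'-1)\max_k\mu_\Xi(A\otimes L^k)$, bounded crudely by $\frac{\rk(E)-1}{p-1}\max_k\mu_\Xi(A\otimes L^k)$, gives the claim.

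For the core estimate I would argue as in \cite{SB1}/\cite{Langer}: take $F$ the maximal destabilizing subsheaf of $F^*G$, so $\mu_\Xi(F) = \mu_{\Xi,\max}(F^*G)$. Consider the Harder--Narasimhan filtration of $F^*G$ with respect to modified slope stability; by Lemma \ref{lem_SB1} the induced $\sO_{\sX}$-morphisms $E_i \to (F^*G/E_i)\otimes\Omega_{\sX}$ coming from $\nabla_{\can}$ are nonzero. Using that $\Omega_{\sX}$ is a quotient of $\pi^*(T_X\otimes\bigoplus_{k=1}^{m-1}L^k)^\vee(-A)$-type data — more precisely that by the globally-generated hypothesis on $\pi_*(T_X\otimes\sum_{k=1}^{m-1}L^k)(A)$ one gets, on $\sX$, a surjection from a sum of bundles of the form $\sO_{\sX}(-A)\otimes L^{-k}$ (or their duals) onto $\Omega_{\sX}$ — a nonzero map $E_i\to (F^*G/E_i)\otimes\Omega_{\sX}$ forces $\mu_{\Xi,\min}(E_i) \le \mu_{\Xi,\max}((F^*G/E_i)\otimes\Omega_{\sX}) \le \mu_{\Xi,\max}(F^*G/E_i) + \max_k\mu_\Xi(A\otimes L^k)$ (the last inequality because tensoring shifts modified slopes additively under the generating-sheaf hypothesis on $\Xi$, as in Remark \ref{rmk_stability_quotient}). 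Chaining these inequalities down the HN filtration, whose length is at most $\rk(F^*G) = \rk(G)$ but whose relevant graded pieces controlling the gap number at most $r'-1$ steps, yields $\mu_{\Xi,\max}(F^*G) - \mu_{\Xi,\min}(F^*G) \le (r'-1)\max_k\mu_\Xi(A\otimes L^k)$, and since $F^*G$ being a Frobenius pullback of a semistable sheaf has $\mu_{\Xi,\min}(F^*G) \ge p\,\mu_{\Xi,\min}(G) = p\,\mu_\Xi(G) = p\,\mu_{\Xi,\max}(G)$ (here using semistability of $G$ and that Frobenius multiplies slopes by $p$), the estimate follows.

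The main obstacle I expect is the last step — making precise how $\Omega_{\sX}$ is dominated by the bundles $\sO_{\sX}(-A)\otimes L^{-k}$ on the stack and extracting the rank-$(r'-1)$ bound (rather than a rank-$\rk(E)$ bound) from the Harder--Narasimhan filtration. On a scheme this is Shepherd-Barron's/Langer's careful bookkeeping with the jumps of the HN polygon; on the Deligne--Mumford stack one must check that $\pi^*$ and the generating-sheaf twisting interact correctly with $\Omega_{\sX}$ versus $\pi^*\Omega_X$ on the codimension-one inertia components, which is exactly where the hypothesis on $\Xi$ (direct sum of locally free sheaves of the same rank on each component of $I\sX_1$) and the globally-generated hypothesis on $A$ are used. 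The rest is a formal transcription of the scheme argument, and the geometric series summation is routine.
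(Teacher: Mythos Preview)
Your approach is essentially the paper's: embed $\Omega_{\sX}$ into $\bigoplus_{k}\sO_{\sX}(A)\otimes L^{k}$ via the generating-sheaf surjection onto $T\sX$, use Lemma~\ref{lem_SB1} to bound the successive jumps $\mu_{\Xi}(E_i/E_{i-1})-\mu_{\Xi}(E_{i+1}/E_i)$ of the HN filtration of $F^*E$, sum to get $\mu_{\Xi,\max}(F^*E)-\mu_{\Xi,\min}(F^*E)\le(\rk(E)-1)\max_k\mu_{\Xi}(A\otimes L^k)$, and then iterate. Your telescoping plus geometric series is exactly the ``induction'' the paper invokes in one line; your duality argument for the $\min$-side is a legitimate alternative to the paper's ``similarly proved''.

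There is one genuine slip: in your final step you assert $\mu_{\Xi,\min}(F^*G)\ge p\,\mu_{\Xi,\min}(G)$ for semistable $G$. This is backwards --- Frobenius pullback can only destabilize, so the minimal slope of $F^*G$ satisfies $\mu_{\Xi,\min}(F^*G)\le p\,\mu_{\Xi}(G)$ (the weighted average of the HN slopes of $F^*G$ equals $p\,\mu_{\Xi}(G)$, hence the minimum is at most this). Fortunately the \emph{correct} inequality is exactly what you need: it gives $\mu_{\Xi,\max}(F^*G)-p\,\mu_{\Xi,\max}(G)\le\mu_{\Xi,\max}(F^*G)-\mu_{\Xi,\min}(F^*G)$, and your chain closes. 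A second small point: you apply the ``core estimate'' to $G_k=(F^k)^*E$, but Lemma~\ref{lem_SB1} needs the base sheaf semistable and $G_k$ generally is not. The fix (which the paper also leaves implicit) is to run the single-step bound through the HN filtration of $G_k$: each factor $H$ is semistable with $\mu_{\Xi,\max}(F^*H)\le p\,\mu_{\Xi}(H)+(\rk(H)-1)M$, and since $\mu_{\Xi,\max}$ is subadditive over filtrations this yields $\mu_{\Xi,\max}(F^*G_k)\le p\,\mu_{\Xi,\max}(G_k)+(\rk(E)-1)M$.
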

\begin{proof}
This is similar to  \cite[Cor 2.5]{Langer}.
We modify our case by observing that $\sum_{k=1}^{m-1}L^{k}$ is a generating sheaf on $\sX.$ So we have 
$$\pi^{*}(\pi_{*}(T\sX\otimes\sum_{k=1}^{m-1}L^{k}))\otimes\sum_{k=1}^{m-1}L^{-k}\to T\sX$$
which is a surjection.
From our assumption $\pi_{*}(T\sX\otimes\sum_{k=1}^{m-1}L^{k})(A)$ is globally generated. Using the right exactness of pullback we obtain,
$$((\sO_{\sX}(-A))^{r}\otimes\sum_{k=1}^{m-1}L^{-k})\to \pi^{*}\pi_{*}(T\sX\otimes\sum_{k=1}^{m-1}L^{k})\otimes \sum_{k=1}^{m-1}L^{-k}\to T\sX$$ which is again a surjection.\\
Dualising we obtain,
$$0\to \Omega_{\sX}\to ((\sO_{\sX}(A))^{r}\otimes\sum_{k=1}^{m-1}L^{k})$$

Recall the  Harder-Narasimhan filtration
$0=E_0\subset E_1\subset \cdots \subset E_m:=F^*E$ for $F^*E$, 
tensoring by $F^{*}(E)/E_{i}$ and taking $\mu_{\Xi,max}$ we obtain,
$$\mu_{\Xi,\max}((F^{*}(E)/E_{i})\otimes \Omega_{\sX})\leq \mu_{\Xi,\max}(((F^{*}(E)/E_{i})\otimes (\sO_{\sX}(A))^{r}\otimes\sum_{k=1}^{m-1}L^{k}))$$
We now use $$\mu_{\Xi,\max}(E\oplus F)=max(\mu_{\Xi,\max}(E),\mu_{\Xi,\max}(F))$$
where $E,F$ are torsion free coherent sheaves.\\
Using the above the right hand side reduces to the following,
$$\max_{1\leq k\leq m-1}(\mu_{\Xi,\max}(F^{*}(E)/E_{i}\otimes A\otimes L^{k})).$$
Now we estimate,
$$\mu_{\Xi,\max}(F^{*}E/E_{i}\otimes A\otimes L^{k})=\mu_{\Xi}(E_{i+1}/E_{i})+\mu_{\Xi}(A\otimes L^{k}).$$
Taking max into account we obtain,
$$\max_{1\leq k\leq m-1}(\mu_{\Xi,\max}(F^{*}(E)/E_{i}\otimes A\otimes L^{k}))=\max_{1\leq k\leq m-1}(\mu_{\Xi}(A\otimes L^{k}))+\mu_{\Xi}(E_{i+1}/E_{i}).$$
Also from Lemma \ref{lem_SB1}, $\mu_{\Xi, \min}(E_i)\leq \mu_{\Xi, \max}((F^*E/E_i)\otimes \Omega_{\sX})$.
Thus from above we get: 
$$\mu_{\Xi}(E_{i}/E_{i-1})-\mu_{\Xi}(E_{i+1}/E_{i})\leq \max_{1\leq k\leq m-1}(\mu_{\Xi}(A\otimes L^{k}))$$
for each $i=1, \cdots, m-1$. 
Thus summing over above inequalities we obtain
$$\mu_{\Xi,\max}(F^{*}(E))-\mu_{\Xi,\min}(F^*E)\leq (\rk(E)-1)\max_{1\leq k\leq m-1}(\mu_{\Xi}(A\otimes L^{k})).$$

Note that $\frac{\mu_{\Xi, \max}((F^k)^*E)}{p^k}$ is weakly increasing, and 
$\mu_{\Xi,\max}(E)\geq \frac{\mu_{\Xi,\min}(F^*E)}{p}$, we have by induction that 
$$\frac{\mu_{\Xi, \max}((F^k)^*E)}{p^k}-\mu_{\Xi,\max}(E)\leq \frac{\rk(E)-1}{p-1} \max_{1\leq k\leq m-1}(\mu_{\Xi}(A\otimes L^{k}))$$
Taking limit we get the inequality for $L_{\Xi,\max}(E)-\mu_{\Xi, \max}(E)$, and the corresponding inequality for $\mu_{\Xi, \min}(E)-L_{\Xi,\min}(E))$ can be similarly proved.  
\end{proof}

\subsection{fdHN property}\label{subsec_fdHN}

As in \cite[\S 2.6]{Langer}, a torsion free coherent sheaf $E$ on $\sX$ has an fdHN property (finite determinacy of the Harder-Narasimhan filtration) if there exists a $k_0$ such that all quotients in the Harder-Narasimhan filtration of $(F^{k_0})^*E$ are strongly modified slope semistable with respect to $\Xi$. 
$E$ is fdHN of it has an fdHN property.   
Similar to the proof in \cite[Theorem 2.7]{Langer}, we have

\begin{prop}\label{prop_fdHN}
Let $E$ be a torsion free sheaf on $\sX$, the $E$ is fdHN. 
\end{prop}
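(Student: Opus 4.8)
\textbf{Proof plan for Proposition \ref{prop_fdHN}.}
The plan is to mimic Langer's argument for \cite[Theorem 2.7]{Langer}, transported to the tame Deligne-Mumford setting using the tools already set up in \S\ref{subsec_char_p_notations_Shepherd-Barron}. The key numerical invariant is $\alpha_\Xi(E) = \max(L_{\Xi,\max}(E)-\mu_{\Xi,\max}(E),\ \mu_{\Xi,\min}(E)-L_{\Xi,\min}(E))$ from Corollary \ref{cor_alphaE}, which measures how far $E$ is from being strongly semistable in terms of its Harder-Narasimhan data. First I would reduce to the following dichotomy for a torsion free sheaf $E$: either all Harder-Narasimhan quotients of $(F^{k})^*E$ are strongly modified slope semistable for some $k$ (so we are done), or else $\alpha_\Xi$ of at least one HN quotient of some Frobenius pullback is bounded below by a fixed positive rational number. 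The idea is that $\alpha_\Xi$ of any quotient with rank $\leq \rk(E)$ is, by Corollary \ref{cor_alphaE}, bounded above by $\tfrac{\rk(E)-1}{p-1}\max_k \mu_\Xi(A\otimes L^k)$, a constant depending only on $E$ and the fixed polarization data, and the discrepancies $\mu_{\Xi,\max}-\mu_{\Xi,\min}$ of the successive HN quotients live in a discrete subset of $\qq$ with bounded denominators (controlled by $\rk(\Xi)$, $\rk(E)$ and the intersection form on the coarse space $X$).

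Next I would run the standard induction on the rank. For the step, consider the Harder-Narasimhan filtration $0=E_0\subset E_1\subset\cdots\subset E_m=E$. If $m=1$ and $E$ is strongly semistable we are done; otherwise one passes to a large Frobenius pullback $(F^{k})^*E$ and examines its HN filtration. The crucial point — exactly as in Langer — is that under $F^*$ the ``gap'' quantity $\sum_i (\mu_{\Xi,\max} - \mu_{\Xi,\min})$ of the associated graded cannot keep strictly increasing forever: the finiteness of the set of possible slope values in a bounded window (boundedness coming from Corollary \ref{cor_alphaE} together with the fact that $L_{\Xi,\max}$ and $L_{\Xi,\min}$ exist as finite limits) forces the HN filtration of $(F^{k})^*E$ to stabilize for $k\gg 0$. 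Once it stabilizes, each HN quotient $Q$ satisfies $\mu_{\Xi,\max}(F^*Q) = p\,\mu_{\Xi,\max}(Q)$ and $\mu_{\Xi,\min}(F^*Q) = p\,\mu_{\Xi,\min}(Q)$; combined with the semistability of $Q$ (as an HN quotient) and with Lemma \ref{lem_SB1}, this yields that $F^*Q$ is again semistable with the same HN type, hence $Q$ is strongly semistable, which is the fdHN property for $E$.

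The one genuinely new ingredient, relative to rephrasing Langer line by line, is making sure the discreteness of slope values is correctly stated on the tame Deligne-Mumford stack $\sX$: the modified degree $\deg_\Xi$ (as in Remark \ref{rmk_stability_quotient}) takes values in $\tfrac{1}{N}\zz$ for a fixed integer $N$ depending on $\rk(\Xi)$ and the polarization $H$ on the coarse space $X$, so slopes $\mu_\Xi$ of subsheaves of bounded rank form a discrete set, and boundedness of the family of HN quotients of all Frobenius pullbacks (needed so that we only see finitely many slope values) follows from Corollary \ref{cor_alphaE}. I would state this as a short lemma and then the rest of the argument is formal.

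The main obstacle I anticipate is not conceptual but bookkeeping: one must verify that boundedness of the set of Harder-Narasimhan slopes occurring in $\{(F^{k})^*E\}_{k\geq 0}$ really does hold uniformly — i.e. that the $L_{\Xi,\max}$, $L_{\Xi,\min}$ bounds from Corollary \ref{cor_alphaE} apply not just to $E$ itself but to every HN subquotient that appears along the tower of Frobenius pullbacks, with a single constant. This is where one has to be careful that the nef divisor $A$ and line bundle $L$ chosen in Corollary \ref{cor_alphaE} can be taken once and for all, and that passing to subquotients of bounded rank does not worsen the estimate beyond the factor $\rk(E)-1$. Granting that uniformity, the stabilization of the HN filtration under $F^*$, and hence the fdHN property, follows exactly as in \cite[Theorem 2.7]{Langer}.
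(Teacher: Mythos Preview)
Your plan diverges from the paper's organizing principle. The paper, following \cite[Theorem~2.7]{Langer} directly, packages everything into the Harder--Narasimhan \emph{polygon}: one records the vertices $p(E_i)=(\alpha_{\Xi,d-1}(E_i),\alpha_{\Xi,d}(E_i))$ of the HN filtration of $(F^k)^*E$ and rescales along the $\alpha_{\Xi,d-1}$-axis by $1/p^k$ to obtain $HNP_k(E)$. The two inputs one checks are that these rescaled polygons are nested, $HNP_k(E)\subset HNP_{k+1}(E)$ (formal: the Frobenius pullback of an HN filtration is still a filtration, and the HN polygon dominates every filtration polygon), and bounded (this is where Corollary~\ref{cor_alphaE} enters). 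With those two facts in hand the paper simply invokes Langer's argument verbatim. Your proposal never introduces the polygon and instead tries to force stabilization through $\alpha_\Xi$, an induction on rank, and a discreteness-of-slopes claim.

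That discreteness step is where your plan has a genuine gap. You assert that the relevant slope values live in $\tfrac{1}{N}\zz$ for a fixed $N$ and simultaneously in a bounded window, hence form a finite set. But the window that is bounded \emph{uniformly in $k$} is the window of rescaled slopes $\mu_\Xi/p^k$ (this is precisely what $L_{\Xi,\max}$ and $L_{\Xi,\min}$ control), and those lie in $\tfrac{1}{Np^k}\zz$, which is not uniformly discrete as $k\to\infty$; so ``bounded $+$ discrete $\Rightarrow$ finite $\Rightarrow$ stabilizes'' does not go through as written. Relatedly, your ``gap quantity'' $\sum_i(\mu_{\Xi,\max}-\mu_{\Xi,\min})$ of the associated graded is identically zero, since each HN quotient is already semistable, so it cannot carry the argument; and the dichotomy in your first paragraph does not visibly lead anywhere. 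The actual stabilization mechanism in Langer is geometric rather than arithmetic: the nested, bounded convex polygons $HNP_k$ with vertices over a fixed finite set of rank-coordinates converge to a limit polygon, and for $k\gg 0$ the break-point pattern of $HNP_k$ agrees with that of the limit; from that point on the HN filtration of $(F^{k+1})^*E$ is the Frobenius pullback of that of $(F^k)^*E$, which is exactly the fdHN property. I would recast your argument in terms of the HN polygon; the boundedness you correctly isolated via Corollary~\ref{cor_alphaE} is then exactly the needed input, and the rest really is a citation of \cite[Theorem~2.7]{Langer}.
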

\begin{proof}
Let 
$$0=E_0\subset E_1\subset\cdots\subset E_m=E$$
be the Harder-Narasimhan filtration of $E$.  We have the HNP (Harder-Narasimhan polygon) associated with $E$ defined by connecting the points $p(E_0), p(E_1), \cdots, p(E_m)$ by successively line segments and connecting the last one with the first one. Here 
$p(G)=(\alpha_{\Xi, d-1}(G), \alpha_{\Xi, d}(G))$. 
Since the base field $\kappa$ has characteristic $p>0$, there is a sequence of polygons $HNP_k(E)$, where $HNP_k(E)$ is defined by  contracting $HPN((F^k)^*E)$ along the $\alpha_{\Xi, d-1}$ axis by the factor $1/p^k$. 
All of the polygons are bounded and $HNP_k(E)$ is contained in $HNP_{k+1}(E)$. 
Then the proof of \cite[Theorem 2.7]{Langer} goes through without major changes.  We omit the details. 
\end{proof}


\section{Bogomolov-Gieseker  inequality}\label{sec_BG_formula_2-dim}

In this section 
we always fix $\sX$ as a smooth tame projective Deligne-Mumford stack of dimension $2$. Interesting two dimensional Deligne-Mumford stacks are reviewed in \cite{JK}, where the name surface Deligne-Mumford stack is used.  We always fix a generating sheaf $\Xi$ on $\sX$ and talk about modified semi-stable sheaves in characteristic p and assume the existence of a $\pi$ ample line bundle on $\sX$. 

\subsection{Condition $\star$}\label{condition_star}
We say a  generating sheaf $\Xi$ on $\sX$  satisfying Condition $\star$ if   its restriction on any component in $I\sX_1$ is a sum of  locally free sheaves of  the same rank  as in    \cite[Proposition 3.18]{Nironi} and Remark \ref{rmk_stability_quotient}. 

We have
$$\deg_{\Xi}(E)=\frac{1}{\rk(\Xi)}\alpha_{\Xi, d-1}(E)-\frac{\rk(E)}{\rk(\Xi)}\alpha_{\Xi,d-1}(\sO_{\sX}).$$
Here $\deg(E)=\int_{\sX}c_1(E)\cdot H^{d-1}=\alpha_{\Xi, d-1}(E)$.
Also $\alpha_{\Xi, d}(E)=\rk(E)\rk(\Xi)H^d$. Since $\frac{\rk(E)}{\rk(\Xi)}\alpha_{\Xi,d-1}(\sO_{\sX})$ is constant for a fixed polarization 
$\sO_X(1)=H, \Xi$, it is reasonable to use 
$\mu_{\Xi}(E)=\frac{\rk(\Xi)\deg_{\Xi}(E)}{\rk(E)}$ as the definition of modified slope.  Note that in this case the modified slope stability is the same as orbifold stability 
defined by $\mu(E)=\frac{\deg(E)}{\rk(E)}$.

For any torsion free coherent sheaf $E$ on $\sX$ of rank $\rk$, recall the Chern character morphism in (\ref{eqn_orbifold_Chern_E}), we rewrite it here
$$\widetilde{\Ch}(E)=\bigoplus_{g\in \sI}\sum_{0\leq f<1}e^{2\pi i f}\Ch((\pr^*E)_{g,f}).$$
We set:
$$\Delta(E):=\Ch_1(E)^2-2\Ch_0(E)\cdot \Ch_2(E)=2\rk(E)c_2(E)-(\rk(E)-1)c_1^2(E).$$

\subsection{Two dimensional case}
  
In this section we prove the following  result:
\begin{thm}\label{thm_Deligne-Mumford_2-dim}
Let $\sX$ be a two dimensional smooth projective Deligne-Mumford stack.  
Then if $E$ be a strongly modified slope semistable torsion free sheaf on $\sX$, we have 
$$\Delta(E)\geq 0.$$
\end{thm}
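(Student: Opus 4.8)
The plan is to follow Moriwaki's strategy for the classical Bogomolov--Gieseker inequality, adapted to the orbifold setting via the orbifold Grothendieck--Riemann--Roch formula. By Proposition \ref{prop_fdHN} and Proposition \ref{prop_geometric_Frobenius_semistability}, it suffices to treat a strongly modified slope semistable torsion free sheaf $E$; since strong semistability is preserved under Frobenius pullback and $\Delta((F^k)^*E) = p^{2k}\Delta(E)$ (the orbifold Chern character transforms by $\widetilde{\Ch}_i((F^k)^*E) = p^{ki}\widetilde{\Ch}_i(E)$, hence $\Delta$ scales by $p^{2k}$), we may freely replace $E$ by any Frobenius pullback. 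The first step is to reduce, using a boundedness/dévissage argument on $\sX$, to the case where $E$ is $\mu_\Xi$-stable: if $E$ is strictly semistable, pass to a Jordan--Hölder factor, noting that $\Delta$ is superadditive along extensions of sheaves with equal slope (the cross term $c_1(E')\cdot c_1(E'') / (\rk E' \rk E'')$ contributes nonnegatively once the slopes agree, exactly as over schemes).

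The core argument is then to consider, for a $\mu_\Xi$-stable $E$ of rank $r$, the sheaf $\cEnd(E) = E \otimes E^\vee$ on $\sX$, or rather a suitable Frobenius twist, and to bound $h^0$ and $h^2$ of $\cEnd(E)(m)$ for $m$ in a controlled range using stability, while computing $\chi$ by orbifold Riemann--Roch \eqref{eqn_orbifold_RR}. Concretely: $\Hom(E,E) = \kappa$ by stability, so $h^0(\cEnd E) = 1$; by Serre duality on the smooth proper Deligne--Mumford stack $\sX$ (using the dualizing sheaf $\omega_\sX$), $h^2(\cEnd E) = \hom(E, E\otimes\omega_\sX)$, which one controls by comparing slopes $\mu_\Xi(E)$ and $\mu_\Xi(E\otimes\omega_\sX)$; after replacing $E$ by a high Frobenius pullback the ``gap'' coming from $\omega_\sX$ becomes negligible relative to $p^k$, forcing $h^2$ to vanish or contribute a lower-order term. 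Feeding these into $\chi(\sX, \cEnd E) = h^0 - h^1 + h^2 \le h^0 + h^2$ and expanding the right side of \eqref{eqn_orbifold_RR} gives an inequality whose leading term in the Frobenius-scaling parameter is (a positive multiple of) $-\Delta(E) \cdot H^0 \le \text{(bounded)}$; dividing by $p^{2k}$ and letting $k \to \infty$ yields $\Delta(E) \ge 0$.

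The step I expect to be the main obstacle is controlling the orbifold correction terms in the Riemann--Roch computation. Unlike the scheme case, $\chi(\sX, \cEnd E)$ is an integral over the inertia stack $I\sX$, so $\widetilde{\Ch}(\cEnd E)\cdot\widetilde{\Td}(T\sX)$ picks up contributions from every twisted sector $\sX_g$, including the codimension-one components in $I\sX_1$; the whole point of imposing Condition $\star$ on $\Xi$ (so that $\mu_\Xi$ agrees with the orbifold slope $\mu(E) = \deg(E)/\rk(E)$, cf. Remark \ref{rmk_stability_quotient}) is precisely to make these extra terms either slope-independent constants or terms that scale subquadratically under Frobenius pullback. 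One must check that the twisted-sector contributions to $\int_{I\sX}\widetilde{\Ch}(\cEnd E)\widetilde{\Td}(T\sX)$ do not interfere with the $\Delta(E)H^2$ leading term: the twisted sectors $\sX_g$ with $g\neq 0$ have dimension $\le 1$, so their contribution to $\chi$ is at most linear in the Frobenius parameter and washes out after dividing by $p^{2k}$, while the untwisted sector $\sX_0 = \sX$ reproduces the usual formula $\chi = \int_\sX \Ch(\cEnd E)\Td(T\sX)$ whose degree-two part is $-\Delta(E) + r^2\,(\text{terms in } \Td)$. Making this bookkeeping precise — and in particular verifying that the relevant slopes $\mu_\Xi$ on $\sX$ behave additively and compatibly with Serre duality on the stack — is where the real work lies; the rest is the standard Moriwaki limiting argument transported verbatim.
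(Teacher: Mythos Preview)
Your overall strategy---Moriwaki's Frobenius-scaling argument combined with orbifold Riemann--Roch, with the twisted-sector bookkeeping---is exactly the paper's, and your analysis of why the inertia contributions scale subquadratically is correct. The gap is in the reduction step and the choice of sheaf to which you apply Riemann--Roch.

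You propose to pass to a Jordan--H\"older factor and then use $\Hom(E,E)=\kappa$ (stability) to control $h^0(\cEnd E)$, with the Frobenius pullback absorbing the $\omega_{\sX}$-gap for $h^2$. The problem is that these two moves are incompatible in characteristic $p$. A Jordan--H\"older factor of a strongly semistable sheaf is stable, but there is no reason it is \emph{strongly} semistable; and conversely, if you start with a stable, strongly semistable $E$ and pull back by $F^k$, you retain semistability but typically lose stability, so the bound $h^0\big(\cEnd((F^k)^*E)\big)=1$ fails precisely when you need it. In short, the quantity you want to bound for each $k$ is governed by stability of $(F^k)^*E$, which you do not have. (The superadditivity of $\Delta$ along equal-slope extensions via Hodge index is fine; it is the input $\Delta(E_i)\ge 0$ for the factors that you cannot supply.)

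The paper sidesteps this by never reducing to the stable case and never passing to $\cEnd E$. Instead it reduces to $c_1(E)=0$ via a Bloch--Gieseker separable cover $Y\to X$ composed with a Frobenius power, pulled back to the stack; this preserves strong semistability. With $c_1(E)=0$ one works with $E$ itself: a short lemma (restriction to a general curve $\sC\in|\pi^*mH|$ plus vanishing of $H^0$ after a negative twist, using only semistability) shows $h^0\big(\sX,(F^n)^*E\otimes L\big)\le M\,p^n$ for any fixed line bundle $L$. Applying this to $L=\sO_{\sX}$ and to $L=\omega_{\sX}$ with $E^\vee$ (Serre duality) bounds $h^0+h^2$ linearly in $p^n$, and the untwisted-sector term $-p^{2n}c_2(E)$ in $\chi$ forces $c_2(E)\ge 0$, i.e.\ $\Delta(E)\ge 0$. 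If you want to salvage the $\cEnd E$ route, you would need to invoke that tensor products of strongly semistable sheaves are strongly semistable (Ramanan--Ramanathan type input) on the stack, and then apply the same curve-restriction lemma to $\cEnd E$ rather than a stability bound; the $\Hom(E,E)=\kappa$ argument is the wrong tool here.
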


We use the method of \cite{Moriwaki}.  Let us first prove a lemma.

\begin{lem}\label{lem_thm_2dim_1}
Suppose that $E$ is strongly modified slope semistable torsion free coherent sheaf of rank $\rk(E)$ on $\sX$ and $c_1(E)\cdot H=0$. Let $L$ be a line bundle on $\sX$, then there exists a positive number $M$ such that 
$$h^0(\sX, (F^n)^*(E)\otimes L)\leq M\cdot p^n$$
for sufficiently large integers $n$. 
\end{lem}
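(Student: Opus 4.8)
The plan is to adapt Moriwaki's key estimate from smooth projective surfaces to the tame Deligne-Mumford setting. The statement is a growth bound on global sections of Frobenius pullbacks twisted by a fixed line bundle, under the hypothesis that $E$ is strongly modified slope semistable with $c_1(E)\cdot H = 0$, which is the orbifold-semistable setting thanks to Condition $\star$.

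\medskip

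\noindent\textbf{Step 1: Reduce to a statement on the coarse moduli space (or at least to divisor geometry on $\sX$).} First I would fix a $\pi$-ample line bundle $L_0$ on $\sX$ (guaranteed by the standing assumption) so that some $\bigoplus_{i} L_0^{\otimes i}$ is a generating sheaf, and choose a very ample divisor class $H$ pulled back from $X$. The idea is to bound $h^0(\sX, (F^n)^*E \otimes L)$ by restricting to a general member $D \in |mH|$ for suitable $m$: one has the exact sequence $0 \to (F^n)^*E \otimes L(-D) \to (F^n)^*E \otimes L \to ((F^n)^*E \otimes L)|_D \to 0$, so $h^0$ is controlled by $h^0$ of the restriction to $D$ plus $h^0$ of the negative twist, the latter vanishing once $D$ is ample enough relative to $L$. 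Here $D$ is a (stacky) curve, and I would use Riemann-Roch / the fact that on a curve semistability of negative slope forces vanishing of $h^0$.

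\medskip

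\noindent\textbf{Step 2: Control the slopes of $(F^n)^*E$ and its restriction.} Since $E$ is strongly modified slope semistable, every $(F^n)^*E$ is modified slope semistable of the same slope $\mu_\Xi(E) = 0$ (because $c_1(E)\cdot H = 0$, and $c_1((F^n)^*E)\cdot H = p^n\, c_1(E)\cdot H = 0$). By a restriction theorem for semistable sheaves to a general divisor — or more elementarily, by bounding $\mu_{\Xi,\max}$ of the restriction using the Grauert-Mülich-type estimate controlled by $\mu_{\Xi,\max} - \mu_{\Xi,\min}$, which is zero here — the restriction $((F^n)^*E \otimes L)|_D$ has all Harder-Narasimhan slopes within a bounded distance (independent of $n$, after dividing by $p^n$) of $\deg_D(L)$, which scales like $\deg(L)\cdot p^n$ only if $D$ itself moves with $n$; since $D$ is fixed, the slope of $((F^n)^*E)|_D$ is $p^n \cdot (c_1(E)\cdot D)/\dots = 0$ and the $L$-twist contributes a bounded slope. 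Then on the fixed curve $D$, $h^0$ of a sheaf of bounded rank and slope bounded above grows at most linearly, giving the $M p^n$ bound — the $p^n$ arising because $\rk((F^n)^*E) = \rk(E)$ stays constant but we accumulate contributions over the iterated structure; more precisely the linear-in-$p^n$ term comes from $\chi$ via orbifold Riemann-Roch, where $\Ch_1((F^n)^*E) = p^n \Ch_1(E)$.

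\medskip

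\noindent\textbf{Step 3: Assemble via orbifold Riemann-Roch and induction on $n$.} Using (\ref{eqn_orbifold_RR}), $\chi(\sX, (F^n)^*E \otimes L) = \int_{I\sX} \widetilde{\Ch}((F^n)^*E \otimes L)\cdot \widetilde{\Td}(T\sX)$, and since $\widetilde{\Ch}_i((F^n)^*E) = p^{ni}\widetilde{\Ch}_i(E)$ in each graded piece, the dominant term in $n$ is the one quadratic in $p^n$ coming from $\widetilde{\Ch}_2$ — but that term is (a multiple of) $\Delta(E)$ up to lower order, and controlling its sign is precisely the theorem we are trying to prove, so we cannot use it yet. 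Instead the argument must be one-sided: bound $h^0$ above by $h^0$ of the restriction to $D$ (Step 1) where the curve estimate is genuinely $O(p^n)$, avoiding the $\chi$ of $\sX$ altogether. I expect \textbf{the main obstacle} to be the restriction step in the stacky setting: ensuring that a general $D \in |mH|$ (which lives on $X$, hence may pass through the stacky locus in a controlled way) is such that $(F^n)^*E|_D$ remains torsion free / semistable-enough, and that the orbifold Riemann-Roch on the stacky curve $D$ produces exactly a linear-in-$p^n$ bound with a constant $M$ independent of $n$. Handling the codimension-one inertia components $I\sX_1$ correctly — where Condition $\star$ is used so that $\deg_\Xi$ has the clean form in Remark \ref{rmk_stability_quotient} — will be the technical heart, together with the vanishing $h^0(\sX, (F^n)^*E \otimes L(-D)) = 0$, which needs $\mu_{\Xi,\max}((F^n)^*E \otimes L(-D)) < 0$ and hence that $D$ is chosen ample enough relative to $L$ but with $m$ fixed once and for all (independent of $n$), which works because the slope of $(F^n)^*E$ is exactly zero.
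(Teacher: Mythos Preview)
Your overall architecture is the same as the paper's: fix $m$ with $L\cdot H - mH^2 < 0$, use that $(F^n)^*E$ is semistable of slope zero to kill $H^0(\sX,(F^n)^*E\otimes L(-\sC))$ for $\sC=\pi^{-1}(C)$ with $C\in|mH|$ general, and then bound $h^0$ by the restriction to the stacky curve $\sC$. Where you diverge is in the curve estimate. You propose to control the Harder--Narasimhan slopes of $(F^n)^*E|_{\sC}$ via a restriction theorem or a Grauert--M\"ulich bound, and you flag this as the main obstacle. The paper sidesteps this entirely: it proves the cruder statement that for \emph{any} vector bundle $V$ on $\sC$ and any line bundle $L$ on $\sC$ one has $h^0(\sC,(F_{\sC}^n)^*V\otimes L)\le M\cdot p^n$, with no semistability hypothesis on $V$ at all. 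The argument is a one-line induction on rank: filter $0\to G\to V\to Q\to 0$ with $G$ a line subbundle, note $(F^n)^*G=G^{\otimes p^n}$ has degree linear in $p^n$, and apply orbifold Riemann--Roch on the curve. So the ``obstacle'' you anticipate---making restriction theorems work in the stacky setting without yet knowing $\Delta(E)\ge 0$---never arises; your route would work if the restriction step can be made uniform in $n$, but the paper's filtration argument is both shorter and strictly more robust.
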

\begin{proof}
First we take a positive integer $m$ such that $L\cdot H-mH^2<0$.  Therefore we have 
$c_1((F^n)^*(E)\otimes L\otimes H^{-m})<0$.  Since $(F^n)^*(E)\otimes L\otimes H^{-m}$ is modified slope semistable,  we have 
$H^0(\sX, (F^n)^*(E)\otimes L\otimes H^{-m})=0$ (otherwise it will  contradicts with the modified semistability).  
Then we choose a general element $C\in |mH|$ and consider $\sC=\pi^{-1}(C)$, and the exact sequence
$$0\to \sO_{\sX}(-\sC)\longrightarrow \sO_{\sX}\longrightarrow \sO_{\sC}\to 0.$$
Tensor with $(F^n)^*(E)\otimes L$ we get an inequality 
$$h^0(\sX, (F^n)^*(E)\otimes L)\leq h^0(\sC, (F_{\sC}^n)^*(E|_{\sC})\otimes L|_{\sC})$$
where $F_{\sC}$ is the absolute Frobenius morphism of $\sC$, and $E|_{\sC},  L|_{\sC}$ are the restrictions to $\sC$. 

We show that there exists a positive number $M$ such that 
$$h^0(\sC, (F_{\sC}^n)^*(E|_{\sC})\otimes L|_{\sC})\leq M\cdot p^n.$$

We prove it for any vector bundle $E$ on $\sC$ of rank $\rk(E)$ and a line bundle $L$ on $\sC$. 
The rank one case of $E$ is obvious since $(F^n)^*E=E^{\otimes p^n}$ and by  orbifold Riemann-Roch theorem \cite{Toen}.  
General case is proved by induction on the rank $\rk(E)$.  Consider an exact sequence
$$0\to G\to E\to Q\to 0$$
where $G$ is a line bundle and $Q$ is a rank $\rk-1$ vector bundle, and the exact sequence:
$$0\to (F^n)^*G\otimes L\longrightarrow (F^n)^*E\otimes L\longrightarrow (F^n)^*Q\otimes L\to 0.$$
Thus we have
$$h^0(\sC, (F_{\sC}^n)^*(E)\otimes L)\leq h^0(\sC, (F_{\sC}^n)^*(G)\otimes L)+h^0(\sC, (F_{\sC}^n)^*(Q)\otimes L).$$
We are done.
\end{proof}

\subsection*{Proof of Theorem \ref{thm_Deligne-Mumford_2-dim}}

We claim that we can assume $c_1(E)=0$. Let $\rk:=l\cdot p^j$ for some integers $l$ and $j$, then $l$ is prime to $p$. Then from \cite[Lemma 2.1]{BG}, since $X$ is a scheme, there is a separable finite morphism $\phi: Y\to X$ such that $c_1(\phi^*E)$ is divisible by $l$.  
Note that \cite[Lemma 2.1]{BG} proved the statement for smooth schemes, but since we are in two dimensional, the argument works for surfaces with at most of quotient singularities.  The coarse moduli space $X$ always has quotient singularities. 
Let 
$f:=\phi\circ F^j$, then $c_1(f^*E)$ is divisible by $\rk$. 
We form the cartesian diagram:
\begin{equation}\label{eqn_key_diagram_Galois}
\xymatrix{
\sY\ar[r]^{\tilde{f}}\ar[d]_{\pi_{Y}}& \sX\ar[d]^{\pi}\\
Y\ar[r]^{f}& X
}
\end{equation}
where $\sY$ is the fibre product.  Then $\sY$ is a Deligne-Mumford stack with the stacky locus all pullback from the stacky locus of $\sX$. 
We set:
$$\widetilde{E}:=\tilde{f}^*(E)\otimes \sO_{\sY}\left(-\frac{c_1(\tilde{f}^*(E))}{\rk}\right).$$
Then we calculate
$$c_1(\widetilde{E})=0; \quad  c_2(\widetilde{E})=\tilde{f}^*\left(c_2(E)-\frac{\rk-1}{2\rk}c_1(E)^2\right).$$
Since $E$ is strongly modified slope semistable with the slope given by $\mu_{\Xi}(E)=\frac{\deg(E)}{\rk(E)\cdot \rk(\Xi)}$,  from \cite[Lemma 1.1]{Gieseker}, $\tilde{f}^*E$ is modified slope semistable.  Therefore $c_2(\widetilde{E})\geq 0$ implies that 
$\Delta(E)\geq 0$.

Then we assume $c_1(E)=0$.  By Lemma \ref{lem_thm_2dim_1}, there exists positive numbers $M_1, M_2$ such that 
$$h^0(\sX, (F^n)^*(E))\leq M_1\cdot p^n; \quad h^0(\sX, (F^n)^*(E^{\vee})\otimes \omega_{\sX})\leq M_2\cdot p^n$$
for sufficiently large integers $n$.   
In the inertia stack $I\sX$, $\sI_1$ denotes the index set such that $\sX_g\subset \sX$ has codimension one for $g\in\sI_1$. We let $\sI_2$ denotes the index set such that $\sX_g\subset \sX$ has codimension two, i.e., the stacky locus consisting of points in $\sX$. 
Then by orbifold Riemann-Roch theorem (\ref{eqn_orbifold_RR}) from \cite{Toen}, 
\begin{align*}
\chi(\sX,  (F^n)^*(E))&=\int_{I\sX}\widetilde{\Ch}( (F^n)^*(E))\cdot \widetilde{\Td}(T_{\sX})\\
&=\int_{\sX}\Ch((F^n)^*(E))\cdot \Td(T_{\sX})+\\
&+\sum_{g\in\sI_1}\int_{\sX_g}\sum_{0\leq f<1}e^{2\pi i f}\Ch( (F^n)^*(E)_{g,f})\cdot \Td(T_{\sX})_{g,f}\\
&+ \sum_{g\in\sI_2}\int_{\sX_g}\sum_{0\leq f<1}e^{2\pi i f}\Ch( (F^n)^*(E)_{g,f})\cdot \Td(T_{\sX})_{g,f}\\
&=-c_2((F^n)^*(E))+\int_{\sX}\rk\cdot \Td(T_{\sX})\\
&+\sum_{g\in\sI_1}\left(\int_{\sX_g}\sum_{0\leq f<1}e^{2\pi i f}c_1((F^n)^*(E)_{g,f})+\int_{\sX_g}\sum_{0\leq f<1}e^{2\pi i f}\cdot \Td(T_{\sX})_{g,f}\right)\\
&+ \sum_{g\in\sI_2}\int_{\sX_g}\sum_{0\leq f<1}e^{2\pi i f}\cdot \Td(T_{\sX})^{0}_{g,f}
\end{align*}
where $\Td(T_{\sX})^{0}_{g,f}$ is the constant term of $\Td(T_{\sX})_{g,f}$. 
By the Frobenius pullback property of Chern classes we have 
\begin{align*}
\chi(\sX,  (F^n)^*(E))
&=-p^{2n}c_2(E)+\int_{\sX}\rk\cdot \Td(T_{\sX})\\
&+\sum_{g\in\sI_1}\left(p^{n}\int_{\sX_g}\sum_{0\leq f<1}e^{2\pi i f}c_1((E)_{g,f})+\int_{\sX_g}\left(\sum_{0\leq f<1}e^{2\pi i f}\right)\cdot \Td(T_{\sX})_{g,f}\right)\\
&+ \sum_{g\in\sI_2}\int_{\sX_g}\left(\sum_{0\leq f<1}e^{2\pi i f}\right)\cdot \Td(T_{\sX})^{0}_{g,f}\\
&\leq (M_1+M_2)p^n
\end{align*}
Hence for large $n$, to ensure the above inequality  we must have $c_2(E)\geq 0$.  We are done. 
$\square$.

\begin{rmk}
If the base field $\kappa$ has character zero, and $E$ is a modified semistable  torsion free sheaf on $\sX$, then  the Bogomolov inequality $\Delta(E)\geq 0$  also holds by the standard method of taking limit, see \cite{Langer}.   We omit the details here, and note that in  the character zero case Bogomolov inequality for orbifold semistable torsion free sheaves is proved in \cite[Lemma 2.5]{Kawamata} for surface Deligne-Mumford stacks with only isolated quotient singularities. 
\end{rmk}

\subsection{Bogomolov inequality for parabolic sheaves}\label{subsec_parabolic_sheaves}

In this section we give an application of the Bogomolov inequality in Theorem \ref{thm_Deligne-Mumford_2-dim} to rational parabolic sheaves on a surface
$X$. 

\subsubsection{Root surfaces}

Let $X$ be a smooth projective surface and $D\subset X$ is a simple normal crossing Cartier divisor.  Let $r\in \zz_{>0}$ be a positive integer.   
The line bundle with the section $(\sO_X(D), s_D)$ defines a morphism 
$$S\to  [\aaa^1/\Gm].$$
Let $\Theta_r:  [\aaa^1/\Gm]\to  [\aaa^1/\Gm]$ be the morphism of stacks given by the morphism 
$$x\in \aaa^1\mapsto x^r\in \aaa^1; \quad  t\in \Gm\mapsto t^r\in \Gm,$$
which sends $(\sO_X(D), s_D)$ to $(\sO_X(D)^{\otimes r}, s^r_D)$.
\begin{defn}\label{defn_root_stack}
Let $\sX:=\sqrt[r]{(X,D)}$ be the stack obtained by the fibre product 
\[
\xymatrix{
\sqrt[r]{(X,D)}\ar[r]\ar[d]_{\pi}& [\aaa^1/\Gm]\ar[d]^{\Theta_r}\\
X\ar[r]^--{(\sO_X(D), s_D)}& [\aaa^1/\Gm].
}
\]
We call $\sX=\sqrt[r]{(X,D)}$ the root stack obtained from $X$ by the $r$-th root construction. 
\end{defn}

The stack $\sX=\sqrt[r]{(X,D)}$ is a smooth Deligne-Mumford stack with stacky locus $\sD:=\pi^{-1}(D)$, and $\sD\to D$ is a $\mu_r$-gerbe over $X$ coming from the line bundle $\sO_S(D)|_{D}$. 

\begin{rmk}
The general root stacks over a logarithmic scheme $X$ is constructed in \cite{BV}, and the pair $(X,D)$ defines a canonical log structure on $X$.  
Since we don't need the abstract language of log schemes we refer the reader to \cite{BV} for details. 
\end{rmk}
 
Theorem \ref{thm_Deligne-Mumford_2-dim} implies the following result:
\begin{prop}\label{prop_Bogomolov_inequality}
Let $\sX=\sqrt[r]{(X,D)}$ be the $r$-th root stack corresponding to the pair $(X,D)$, and let $E$ be a strongly  slope semistable torsion free coherent sheaf on 
$\sX$ with respect to the polarization $(\sO_{\sX}, \sO_X(1))$. Then we have
$$\Delta(E)=2\rk(E) c_2(E)-(\rk(E)-1)c_1(E)^2\geq 0.$$
\end{prop}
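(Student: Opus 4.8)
The plan is to deduce Proposition \ref{prop_Bogomolov_inequality} directly from Theorem \ref{thm_Deligne-Mumford_2-dim}, so the only real content is checking that the hypotheses of the theorem are met in the root-stack situation. The root stack $\sX = \sqrt[r]{(X,D)}$ is a two dimensional smooth tame projective Deligne-Mumford stack (tameness because $r$ is invertible in $\kappa$ once we assume $p \nmid r$, which is the standing tameness convention), so the ambient geometric hypotheses of Theorem \ref{thm_Deligne-Mumford_2-dim} hold. The polarization used here is $(\Xi, \sO_X(1))$ with $\Xi = \sO_{\sX}$, i.e.\ the trivial generating sheaf; I would first note that for a root stack along a divisor, $\sO_{\sX}$ does generate (this is the special feature of root-stack geometry: the gerbe structure along $\sD$ has a canonical trivialization of representations coming from $\sO_{\sX}(k\sD)$, $0\le k<r$, and more precisely $\bigoplus_{k=0}^{r-1}\sO_{\sX}(k\sD)$ is a generating sheaf — cf.\ the discussion around Corollary \ref{cor_alphaE}), so the phrase ``generating sheaf'' is being used loosely and the modified slope with respect to $\Xi=\sO_{\sX}$ coincides with the orbifold slope $\mu(E) = \deg(E)/\rk(E)$, exactly as in Condition $\star$ of \S\ref{condition_star}.

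Next I would verify Condition $\star$ itself: the restriction of $\Xi = \sO_{\sX}$ to any codimension-one component $\sX_g \subset I\sX_1$ must be a direct sum of locally free sheaves of the same rank. For the root stack, the inertia stack $I\sX$ has the untwisted component $\sX$ together with components supported over $\sD$ indexed by the nontrivial elements of $\mu_r$; each such twisted sector $\sX_g$ is isomorphic to $\sD$ (a codimension-one closed substack), and $\Xi|_{\sX_g} = \sO_{\sX}|_{\sX_g} = \sO_{\sD}$ is a single line bundle, hence trivially a direct sum of locally free sheaves of the same rank. So Condition $\star$ holds. With $\Xi = \sO_{\sX}$ satisfying Condition $\star$, the hypothesis ``strongly modified slope semistable with respect to $(\Xi,\sO_X(1))$'' in Theorem \ref{thm_Deligne-Mumford_2-dim} is literally the hypothesis ``strongly slope semistable with respect to $(\sO_{\sX},\sO_X(1))$'' in the Proposition.

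The conclusion is then immediate: Theorem \ref{thm_Deligne-Mumford_2-dim} gives $\Delta(E) \ge 0$, and by the definition of $\Delta$ recorded in \S\ref{condition_star},
\[
\Delta(E) = \Ch_1(E)^2 - 2\Ch_0(E)\cdot\Ch_2(E) = 2\rk(E)c_2(E) - (\rk(E)-1)c_1(E)^2,
\]
which is exactly the displayed inequality in the Proposition. I would also remark that all the intersection numbers here are computed on the coarse space $X$ (or equivalently via the orbifold Chern classes pushed to $X$), consistent with the convention fixed before Corollary \ref{cor_alphaE}, so there is no ambiguity in interpreting $c_2(E)$ and $c_1(E)^2$.

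The only step that requires genuine care — and hence the main (mild) obstacle — is the assertion that $\sO_{\sX}$ generates and that the modified slope for $\Xi = \sO_{\sX}$ reduces to the orbifold slope; one has to be slightly careful because $\sO_{\sX}$ is not a generating sheaf in the strict sense of Definition \ref{defn_generating_sheaf} (it does not contain every irreducible representation of $\mu_r$ along $\sD$). The resolution is that for semistability questions on a root stack one may either work with the honest generating sheaf $\bigoplus_{k=0}^{r-1}\sO_{\sX}(k\sD)$, whose restriction to each twisted sector is still a sum of line bundles (so Condition $\star$ persists), or invoke the equivalence between coherent sheaves on $\sX$ and rational parabolic sheaves on $(X,D)$ under which the relevant notion of stability matches; in either formulation the degree and rank enter only through $\deg(E)/\rk(E)$ up to a fixed constant, so the conclusion of Theorem \ref{thm_Deligne-Mumford_2-dim} applies verbatim. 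I would spell out whichever of these two normalizations is used elsewhere in the paper and then quote Theorem \ref{thm_Deligne-Mumford_2-dim} to finish.
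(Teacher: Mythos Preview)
Your proposal is correct and follows exactly the paper's approach: the paper gives no separate proof of this proposition, simply prefacing it with ``Theorem \ref{thm_Deligne-Mumford_2-dim} implies the following result''. Your verification of Condition~$\star$ and your observation that the polarization $(\sO_{\sX},\sO_X(1))$ must really be read as the orbifold slope (equivalently, the modified slope for the honest generating sheaf $\bigoplus_{k=0}^{r-1}\sO_{\sX}(k\sD)$, which does satisfy Condition~$\star$) are more careful than the paper itself, which leaves these points implicit.
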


\subsubsection{Parabolic sheaves}

\begin{defn}(\cite{MY})\label{defn_parabolic_sheaves}
Let $E$ be a torsion-free coherent sheaf on $X$.  A {\em parabolic structure} on $E$ is given by a length $d$-filtration 
$$E=F_1(E)\supset F_2(E)\supset \cdots\supset F_r(E)\supset F_{r+1}(E)=E(-D),$$
together with a system of weights 
$$0\leq \alpha_1, \alpha_1,\cdots, \alpha_{r}<1.$$
We call $E_{\bullet}=(E,F_i(E))$ a rational parabolic sheaf associated with the divisor $D$ if all the weights  $\alpha_1, \alpha_1,\cdots, \alpha_{r}$ are all rational. 
Let $G_i(E)=F_i(E)/F_{i+1}(E)$. The Hilbert polynomial $\chi(G_i(E)(m))$ is called the $i$-th multiplicity polynomial of the weight $\alpha_i$. 
\end{defn}

As in \cite[Definition 1.8]{MY},  the parabolic Euler characteristic $\pa-\chi(E_\bullet)$ of $E_{\bullet}$ is defined as:
\begin{equation}\label{eqn_parabolic_Euler}
\chi(E(-D))+\sum_{i=0}^{d-1}\alpha_i \chi(G_i).
\end{equation}
The polynomial  $\pa-\chi(E_\bullet(m))$ is called the parabolic Hilbert polynomial of $E_\bullet$ and the polynomial 
$\pa-\chi(E_\bullet(m))/\rk(E)$ is denoted by $\pa-p_{E_\bullet}(m)$.

\begin{defn}\label{defn_parabolic_semistability}
The  parabolic sheaf of  $E_\bullet$  is said to be parabolic Gieseker stable (resp. parabolic semistable) if for every parabolic subsheaf $F_\bullet$ of $E_\bullet$ with 
$$0<\rk(F)<\rk(E)$$
we have
$$\pa-p_{F_\bullet}(m)<\pa-p_{E_\bullet}(m), \quad (\text{resp.}  \pa-p_{F_\bullet}(m)\leq \pa-p_{E_\bullet}(m)).$$

The parabolic degree $\padeg(E_{\bullet})$ is defined by
$$\padeg(E_{\bullet})=\int_{0}^{1}\deg(E_{\alpha})d\alpha+\rk(E)\cdot \deg(D).$$
We set the parabolic slope as $\pa-\mu(E_{\bullet})=\frac{\padeg(E_{\bullet})}{\rk(E)}$. 
Then $E_\bullet$  is  parabolic slope stable (resp. parabolic semistable) if for every parabolic subsheaf $F_\bullet\subset E_\bullet$ with 
$$0<\rk(F)<\rk(E)$$
we have
$$\pa-\mu(F_\bullet)<\pa-\mu(E_\bullet),  \quad (\text{resp.} \pa-\mu(F_\bullet)\leq \pa-\mu(E_\bullet).$$
\end{defn}

\subsubsection{Equivalence of categories}\label{subsubsec_equivalence_categories}
For the root stack 
$\sX=\sqrt[r]{(X,D)}$, we choose the generating sheaf  
$\Xi=\oplus_{i=0}^{r}\sO_{\sX}(\sD^{\frac{i}{r}})$.
Let $\Coh(\sX)$ be the abelian category of coherent sheaves on $\sX$, and $\Par_{\frac{1}{r}}(X,D)$ the abelian category of rational parabolic sheaves on $(X,D)$ with length $r$. 
There exist  two functors:
$$\sF_{\sX}: \Coh(\sX)\to \Par_{\frac{1}{r}}(X,D); \quad E\mapsto \sF_{\sX}(E)$$
where $\sF_{\sX}(E)_l=\pi_{*}(\E\otimes \sO_{\sX}(-l \sD))$; and    
$$G_{\sX}: \Par_{\frac{1}{r}}(X,D)\to \Coh(\sX); \quad E_{\bullet}\mapsto \int^{\zz}g_{\sX}(E_{\bullet})(l,l)$$
where $\int^{\zz}g_{\sX}(E_{\bullet})(l,l)$ is the colimit of wedges:
\[
\xymatrix{
g_{\sX}(E_{\bullet})(l,m)\ar[r]^{f_{d,m}}\ar[d]_{h_{l,m}}& g_{\sX}(E_{\bullet})(l,l)\ar[d]^{w(l)}\\
g_{\sX}(E_{\bullet})(m,m)\ar[r]^--{w(m)}& \sG
}
\]
where
\begin{enumerate}
\item $g_{\sX}(E_{\bullet}): \zz^{0}\times \zz\to \Coh(\sX)$ is a map given by:
$$(l,m)\mapsto \sO_{\sX}(l\sD)\otimes p^*E_m;$$
\item $m\geq l$ is an arrow in $\zz$, and the arrow $h_{l,m}$ is induced by the canonical section of the divisor, the arrow $f_{l,m}$ is induced by the filtration $p^*E_{\bullet}$, the arrow $w(l)$ is a dinatural transformation and $\sG$ is a sheaf in 
$\Coh(\sX)$. 
\end{enumerate}

We define that a parabolic sheaf $E_\bullet\in \Par_{\frac{1}{r}}(X,D)$ to be torsion free if $E_0$ is torsion free. 
Then we have:
\begin{thm}(\cite[Theorem 6.1]{BV})\label{thm_category_equivalence}
The functor $G_{\sX}$ maps torsion free sheaves on $X$ to torsion free sheaves on $\sX$. Moreover, $\sF_{\sX}$ and $G_{\sX}$ are inverse to each other when applied to torsion free sheaves. 
\end{thm}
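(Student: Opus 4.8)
The plan is to reduce the statement to a purely local question about graded modules, settle that by a direct computation, and then descend. Working étale-locally on $X$ we may assume $D$ is the zero locus of a single regular function $x\in A=\Gamma(\sO_X)$; then by Definition \ref{defn_root_stack} the root stack $\sX$ becomes the quotient stack $[\spec(A[t]/(t^r-x))/\mu_r]$, where $\mu_r$ acts on $t$ by scaling and the tautological line bundle on $\sX$ (the one whose $r$-th tensor power is $\pi^*\sO_X(D)$) corresponds to the shift of the $\mu_r$-weight by one. Hence a coherent sheaf on $\sX$ is the same thing as a finitely generated $\zz/r$-graded $A[t]/(t^r-x)$-module $M=\bigoplus_{j\in\zz/r}M_j$; multiplication by $t$ gives $A$-linear maps $M_j\to M_{j-1}$ whose $r$-fold composite is multiplication by $x$, and unwinding this is precisely the data of a length-$r$ filtration of the $A$-module $M_0$ of the kind in Definition \ref{defn_parabolic_sheaves}, with weights $\tfrac{i}{r}$. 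In these coordinates $\sF_{\sX}$ becomes ``take the homogeneous pieces $\pi_*(E\otimes\sO_{\sX}(-l\sD))$'' while $G_{\sX}$ becomes ``rebuild the graded module from the filtration'', and the colimit of wedges defining $G_{\sX}$ is exactly the left Kan extension performing that reconstruction; a direct check on graded modules shows the two operations are mutually inverse, and that the defining colimit stabilizes after finitely many terms, so $G_{\sX}(E_\bullet)$ is coherent.

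Next I would globalize. The $r$-th root construction commutes with base change on $X$, and $\pi\colon\sX\to X$ is flat and finite (a root stack is flat over its coarse space, as noted after Definition \ref{defn_generating_sheaf}); consequently both $\pi_*$ of a twist of $E$ and the coend defining $G_{\sX}(E_\bullet)$ commute with étale restriction on $X$ --- indeed $G_{\sX}$ is assembled from pullbacks and a colimit, hence is formally compatible with the exact, colimit-preserving restriction functors. Therefore the local natural isomorphisms $\sF_{\sX}G_{\sX}(E_\bullet)\cong E_\bullet$ and $G_{\sX}\sF_{\sX}(E)\cong E$ produced above patch to global ones, which yields the asserted equivalence once torsion-freeness is seen to be preserved.

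For the torsion-free assertion: flatness and finiteness of $\pi$ give that $\pi^*E_m$ is torsion free on $\sX$ whenever $E_m$ is torsion free on $X$, since $\pi^*$ carries associated points into the generic fibre of $\pi$, i.e. into the generic points of the components of $\sX$; twisting by the root line bundle then preserves torsion-freeness. In the local model above $G_{\sX}(E_\bullet)$ is built out of the submodules $F_i\subseteq E_0$ and so embeds into a sum of copies of $\pi^*E_0$; as $\pi^*E_0$ is torsion free, $G_{\sX}(E_\bullet)$ is torsion free and coherent. Equivalently one may transport the notion along the equivalence: $E_\bullet$ is torsion free exactly when $E_0$ is, and $E_0=(G_{\sX}(E_\bullet))_0$ sits inside the flat, torsion-free-preserving extension $A\to A[t]/(t^r-x)$, so $G_{\sX}(E_\bullet)$ has no torsion.

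The step I expect to be the main obstacle is making the coend/colimit bookkeeping in the definition of $G_{\sX}$ honest enough to identify it with the inverse of $\sF_{\sX}$: matching the dinatural wedges $w(l)$ with the filtration arrows $f_{l,m}$ and the divisor-section arrows $h_{l,m}$, checking that the colimit is finite (so the output is coherent), and verifying the two triangle identities. This categorical bookkeeping --- carried out in detail in \cite{BV} --- is where the genuine content lies; the flatness and torsion-freeness statements are comparatively routine once $\pi$ is known to be flat and finite.
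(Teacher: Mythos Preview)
The paper does not give its own proof of this statement: Theorem~\ref{thm_category_equivalence} is simply quoted from \cite[Theorem~6.1]{BV} with no argument supplied, so there is nothing in the present paper to compare your proposal against.

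That said, your sketch is essentially the strategy of \cite{BV}: pass to the \'etale-local model $\sX\simeq[\spec(A[t]/(t^r-x))/\mu_r]$, identify coherent sheaves on $\sX$ with $\zz/r$-graded $A[t]/(t^r-x)$-modules, observe that in this dictionary $\sF_{\sX}$ and $G_{\sX}$ become the tautological operations of extracting graded pieces and reassembling, and then descend. You have also correctly located the genuine content in the coend bookkeeping and the verification of the triangle identities. One small caution on the torsion-freeness step: the claim that $G_{\sX}(E_\bullet)$ ``embeds into a sum of copies of $\pi^*E_0$'' is an embedding of $\sO_X$-modules after pushforward, not obviously of $\sO_{\sX}$-modules, so it does not by itself control $\sO_{\sX}$-torsion; the clean way to finish is to use that $\pi$ is finite and faithfully flat, whence a coherent sheaf on $\sX$ is torsion free if and only if its pushforward to $X$ is, and $\pi_*G_{\sX}(E_\bullet)\cong\bigoplus_i F_i(E)\subset E_0^{\oplus r}$.
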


\subsubsection{Parabolic Bogomolov inequality}\label{subsubsec_parabolic_inequality}

In this section we apply Proposition \ref{prop_Bogomolov_inequality} and Kawamata cover of $(X,D)$ constructed in \cite{Biswas} to deduce the parabolic Bogomolov inequality for parabolic semistable sheaves. 

Let $Y\to X$ be the Galois cover with Galois group $G=\Gal(\Rat(Y)/\Rat(X))$ as in \cite{Biswas}.
Let $r||G|$, then by base change of the root stack in the fibre product  diagram 
\[
\xymatrix{
\sY\ar[r]^{\tilde{f}}\ar[d]_{\pi_{Y}}& \sX\ar[d]^{\pi}\\
Y\ar[r]^{f}& X
}
\]
in Diagram \ref{eqn_key_diagram_Galois}, the Deligne-Mumford stack $\sY$ is the root stack of the pullback line bundle 
$(f^*\sO_X(D), f^*s_D)$.  Since the degree of $f^*\sO_X(D)$ is divided by $r$, the root stack associated this line bundle is trivial. Therefore 
we have $\sY\cong Y$ is a scheme and we have the following diagram:
\[
\xymatrix{
& \sX\ar[d]^{\pi}\\
Y\ar[ur]^{\tilde{f}}\ar[r]^{f}& X
}
\]
and the quotient stack $[Y/G]\cong \sX$. 

We are ready to state the parabolic Bogomolov inequality.  We set up some notations.
First let $W$ be a coherent sheaf on $\sX$. Then from Theorem \ref{thm_category_equivalence}, there exists a rational parabolic sheaf 
$(E, F_*, \alpha_{*})$ such that 
$F_i=\pi_{*}(W\otimes \sO_{\sX}(-i \sD))$ and $\alpha_i=\frac{i}{r}$.

Let $D=\sum_{\lambda=1}^{h} D_\lambda$ be the decomposition of $D$ into smooth irreducible components. 
Let $\widetilde{D}=(f^{*}D)_{\red}$ which is normal crossing. Then 
$f^*D_{\lambda}= k_{\lambda}r(f^*D_{\lambda})_{\red}$, where $  1 \leq \lambda\leq  h$, and $ k_{\lambda} \geq  1$  are
integers.
The torsion free coherent sheaf $W$ on $\sX=[Y/G]$ gives a $G$-equivariant torsion free coherent sheaf  on $Y$
which we still denote by $W$. 
Let 
$$\iota: \widetilde{D}\to Y$$
be the inclusion and $\iota^{\lambda}: \widetilde{D}_{\lambda}\to Y$ be the inclusion of the component $\widetilde{D}_{\lambda}$, 
where $\widetilde{D}_{\lambda}=(f^{*}D_{\lambda})_{\red}$.
Let 
$H_i=E|_{D}/F_i(E|_{D})$ be the sheaf on $D$. Then 
$$H_j=\bigoplus_{\lambda=1}^{h}\overline{\iota}_*^{\lambda}H_j^{\lambda},$$
where $\overline{\iota}^{\lambda}: D_{\lambda}\hookrightarrow D$ denotes the inclusion. 
Define 
$G_{i,\lambda}=H^{\lambda}_{i}/H^{\lambda}_{i+1}$
and 
$$\widetilde{G}_{i,\lambda}:=(f\iota^{\lambda})^*G_{i,\lambda}.$$

Then from \cite[Formula (3.15)]{Biswas} we have:
\begin{equation}
W=f^*E+\sum_{i=1}^{r}\sum_{\lambda=1}^{h}\sum_{j=1}^{k_{\lambda}m_i}\iota_*^{\lambda}\left(\widetilde{G}_{i,\lambda}\otimes N_{\widetilde{D}_{\lambda}}^{j}\right)
\end{equation}
in the K-theory $K_0(Y)$, where $N_{\widetilde{D}_{\lambda}}=\sO_{Y}(\widetilde{D}_{\lambda})|_{\widetilde{D}_{\lambda}}$ is the normal bundle to the divisor $\widetilde{D}_{\lambda}$. 

From equivalence between the categories $\Coh(\sX)$ and $\Par_{\frac{1}{r}}(X,D)$ in Theorem \ref{thm_category_equivalence}, the the main result in \cite{BV}, by choosing the generating sheaf $\Xi=\bigoplus_{i=0}^{r}\sO_{\sX}(i\sD)$, $W$ is modified semistable if and only if the corresponding rational parabolic sheaf $(E, F_*, \alpha_*)$ is parabolic semistable. But from \cite[Lemma 3.13]{Biswas}, the  rational parabolic sheaf $(E, F_*, \alpha_*)$ is parabolic semistable with respect to $\sO_X(1)$ if and only if the corresponding sheaf 
$W$ is orbifold semistable with respect to $f^*\sO_X(1)$. From Proposition \ref{prop_Bogomolov_inequality}, if $W$ is strongly semistable, then 
$$\Delta(W)\geq 0.$$
Thus we have
\begin{thm}\label{thm_parabolic_Bogomolov}(\cite{Biswas})
Let $W$ be an orbifold strongly  semistable torsion free coherent sheaf on the root stack $\sX$ such that its corresponding rational parabolic sheaf $(E, F_*, \alpha_*)$ is parabolic strongly semistable, then we have 
\begin{align*}
&c_2(E)+c_1(E)\cup\left(\sum_{i=1}^{r}\sum_{\lambda=1}^{h}\alpha_i\cdot r_{i\lambda}[D_{\lambda}]\right)
+\frac{1}{2}\left(\sum_{i=1}^{r}\sum_{\lambda=1}^{h}\alpha_i\cdot r_{i\lambda}[D_{\lambda}]\right)^2\\
&-\sum_{i=1}^{r}\sum_{\lambda=1}^{h}\left(\frac{\alpha_i^2\cdot r_{i\lambda}\cdot D_{\lambda}\cdot D_{\lambda}}{2}+\alpha_i\cdot d_{i\lambda}\right)\\
&\geq \frac{\rk(E)-1}{2\rk(E)}\left(c_1(E)+\sum_{i=1}^{r}\sum_{\lambda=1}^{h}\alpha_i\cdot r_{i\lambda}\cdot [D_{\lambda}]\right)^2
\end{align*}
where 
$r_{i\lambda}=\rk(G_{i,\lambda})$ and $d_{i\lambda}=\deg(G_{i,\lambda})$.
\end{thm}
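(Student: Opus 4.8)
The plan is to combine the orbifold Bogomolov inequality of Proposition~\ref{prop_Bogomolov_inequality} with the $K$-theoretic description of $W$ on the Kawamata cover supplied by \cite{Biswas}. First I would observe that, since $W$ is orbifold strongly semistable and torsion free on the two dimensional smooth tame projective root stack $\sX$, Proposition~\ref{prop_Bogomolov_inequality} yields $\Delta(W)=c_1(W)^2-2\rk(W)\,\Ch_2(W)\ge 0$. Because $[Y/G]\cong\sX$ and $\int_{\sX}\beta=\tfrac1{|G|}\int_Y\tilde{f}^{*}\beta$ for $\beta\in H^{4}(\sX)$, this is equivalent to $\int_Y\!\big(c_1(W)^2-2\rk(W)\,\Ch_2(W)\big)\ge 0$, where from now on $W$ denotes the $G$-equivariant torsion free sheaf $\tilde{f}^{*}W$ on the smooth surface $Y$, with $\rk(W)=\rk(E)$. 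Thus it is enough to compute $c_1(W)$ and $\Ch_2(W)$ on $Y$ in terms of $(X,D)$.

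For this I would plug the identity $[W]=[f^{*}E]+\sum_{i=1}^{r}\sum_{\lambda=1}^{h}\sum_{j=1}^{k_{\lambda}m_i}[\iota_{*}^{\lambda}(\widetilde{G}_{i,\lambda}\otimes N_{\widetilde{D}_{\lambda}}^{\,j})]$ of \cite[Formula (3.15)]{Biswas} into the Chern character. For a sheaf $\sF$ of rank $\rho$ on the smooth divisor $\iota^{\lambda}\colon\widetilde{D}_{\lambda}\hookrightarrow Y$, Grothendieck--Riemann--Roch for the embedding gives $c_1(\iota_{*}^{\lambda}\sF)=\rho\,[\widetilde{D}_{\lambda}]$ and $\Ch_2(\iota_{*}^{\lambda}\sF)=\iota_{*}^{\lambda}c_1(\sF)-\tfrac{\rho}{2}[\widetilde{D}_{\lambda}]^2$. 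Applying this to $\sF=\widetilde{G}_{i,\lambda}\otimes N_{\widetilde{D}_{\lambda}}^{\,j}$ (of rank $r_{i\lambda}$, with $c_1(\sF)=c_1(\widetilde{G}_{i,\lambda})+j\,r_{i\lambda}\,c_1(N_{\widetilde{D}_{\lambda}})$), summing over $1\le j\le k_{\lambda}m_i$ the resulting linear and quadratic expressions in $j$, and then using $N_{\widetilde{D}_{\lambda}}=\sO_Y(\widetilde{D}_{\lambda})|_{\widetilde{D}_{\lambda}}$ together with $f^{*}D_{\lambda}=k_{\lambda}r\,\widetilde{D}_{\lambda}$ (so that $\widetilde{D}_{\lambda}=\tfrac{1}{k_{\lambda}r}f^{*}D_{\lambda}$ as cycles and $\alpha_i=\tfrac{m_i}{r}$) collapses the correction terms into
\[
c_1(W)=f^{*}\Big(c_1(E)+\sum_{i,\lambda}\alpha_i r_{i\lambda}[D_{\lambda}]\Big),\qquad
\Ch_2(W)=f^{*}\Ch_2(E)+\sum_{i,\lambda}\Big(k_{\lambda}m_i\,\iota_{*}^{\lambda}c_1(\widetilde{G}_{i,\lambda})+\tfrac{\alpha_i^{2}}{2}r_{i\lambda}(f^{*}D_{\lambda})^{2}\Big).
\]

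Next I would integrate over $Y$. Every term is either $f^{*}$ of a class on $X$ or $\iota_{*}^{\lambda}$ of a class on $\widetilde{D}_{\lambda}$, so the projection formula converts each intersection number on $Y$ into $|G|$ times one on $X$, using $\deg f=|G|$, $(f^{*}D_{\lambda})^{2}=|G|\,D_{\lambda}^{2}$, and $f_{*}[\widetilde{D}_{\lambda}]=\tfrac{|G|}{k_{\lambda}r}[D_{\lambda}]$; in particular $\int_Y k_{\lambda}m_i\,\iota_{*}^{\lambda}c_1(\widetilde{G}_{i,\lambda})=|G|\,\alpha_i d_{i\lambda}$ because $\widetilde{G}_{i,\lambda}=(f\iota^{\lambda})^{*}G_{i,\lambda}$ and $\widetilde{D}_{\lambda}\to D_{\lambda}$ has degree $\tfrac{|G|}{k_{\lambda}r}$. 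Dividing $\int_Y\!\big(c_1(W)^2-2\rk(E)\,\Ch_2(W)\big)\ge 0$ by $2\rk(E)\,|G|$, substituting $\Ch_2(E)=\tfrac12 c_1(E)^2-c_2(E)$, and regrouping via $\tfrac12\big(c_1(E)+\sum_{i,\lambda}\alpha_i r_{i\lambda}[D_{\lambda}]\big)^{2}=\tfrac12 c_1(E)^2+c_1(E)\cdot\sum_{i,\lambda}\alpha_i r_{i\lambda}[D_{\lambda}]+\tfrac12\big(\sum_{i,\lambda}\alpha_i r_{i\lambda}[D_{\lambda}]\big)^{2}$ produces precisely the displayed inequality of the theorem.

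The main obstacle lies entirely in the two computational steps: keeping the normalization by $|G|$ and the ramification indices $k_{\lambda}r$ straight, matching $\deg(\widetilde{G}_{i,\lambda})$ on $\widetilde{D}_{\lambda}$ with $d_{i\lambda}=\deg(G_{i,\lambda})$ on $X$ and $[\widetilde{D}_{\lambda}]^{2}$ with $D_{\lambda}^{2}$, and carrying out the sums $\sum_{j=1}^{k_{\lambda}m_i}1$ and $\sum_{j=1}^{k_{\lambda}m_i}j$ so that the polynomial contributions in $j$ collapse cleanly to the $\alpha_i$- and $\alpha_i^{2}$-weighted terms. These are exactly the bookkeeping computations performed in \cite{Biswas}, which the argument invokes; the only genuinely new input is the orbifold Bogomolov inequality $\Delta(W)\ge 0$ coming from Proposition~\ref{prop_Bogomolov_inequality}.
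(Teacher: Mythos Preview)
Your proposal is correct and follows essentially the same approach as the paper: apply Proposition~\ref{prop_Bogomolov_inequality} to obtain $\Delta(W)\ge 0$, then substitute the Chern class formulas for $W$ computed in \cite[\S 4]{Biswas} from the $K$-theoretic identity (3.15). The only difference is that you spell out in detail the Grothendieck--Riemann--Roch bookkeeping that the paper simply outsources to \cite{Biswas}.
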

\begin{proof}
The Chern class formula $c_2(W), c_1(W)$ are calculated in \cite[\S 4]{Biswas}.  Plug these into the inequality $\Delta(W)\geq 0$ we get the formula in the theorem. 
\end{proof}
\begin{rmk}
\cite{Biswas} proves that the orbifold semistability of $W$ is equivalent to the parabolic semistability of the corresponding parabolic sheaf $(E, F_*, \alpha_*)$, and the  parabolic semistability is equivalent to the modified semistability of $W$ again for the generating sheaf 
$\Xi=\oplus_{i=0}^{r}\sO_{\sX}(\sD^{\frac{i}{r}})$.
\end{rmk}


\appendix

\section{Higher dimension case}\label{subsec_higher_dimension_case}

In the appendix we generalize Langer's argument to higher dimension case.  For simplicity of the calculation of modified slopes, we restrict to a special case of smooth Deligne-Mumford stacks $\sX=[Z/G]$ which is a quotient stack such that the action of $G$ is diagonalizable.
We still let the generating sheaf $\Xi$ on $\sX$ satisfying Condition \ref{condition_star}.  Still let $d=\dim(\sX)$ be the dimension of $\sX$.

Let 
$F:\sX\to \sX$ be the absolute Frobenius map and it is identity in characteristic zero. Note that a coherent sheaf $E$ on $\sX$ is strongly modified slope semistable if and only if it is semistable under Frobenius pullbacks.   \S \ref{subsec_char_p_notations_Shepherd-Barron} implies that $L_{\Xi,\max}(E)$ and $L_{\Xi,\min}(E)$ are well defined rational numbers. 
Let $L\in \Pic(\sX)$ be a $\pi$-ample line bundle on $\sX$ such that 
$\bigoplus _{k=1}^{m-1}L^{\otimes k}$ is a generating sheaf. 
Recall that $\pi: \sX\to X$ is the map to its coarse moduli space. We choose a nef divisor $A$ on the scheme $X$ and let:
$$
\beta_{\rk}:=\beta_{\rk(E)}(A, H)=\left(\frac{\rk(E)\cdot (\rk(E)-1)(\max_{1\leq k\leq m-1}(\mu_{\Xi}(A\otimes L^{k})))}{p-1}\right)^{2}.$$

We  state several theorems generalizing Langer \cite[\S 3]{Langer}.

\begin{thm}\label{thm_Deligne-Mumford_1}
Let $D_1$ be a very ample divisor on $X$ and $\sD_1:=\pi^{-1}(D_1)$ and $\sD=\pi^{-1}(D)$ for $D$  a general element $D\in |D_1|$ .  If the restriction of a coherent sheaf $E$ on $\sX$ to $\sD$ 
 is not modified slope semistable  with respect to $H|_{D}$ and $\Xi|_{D}$, then  let $\mu_{\Xi,i}, r_i$ denote the modified slopes and ranks respectively in the Harder-Narasimhan filtration of $E|_{D}$, we have 
\begin{equation}\label{eqn_Deligne-Mumford-3-1}
\sum_{i<j}r_i r_j(\mu_{\Xi,i}-\mu_{\Xi,j})^2\leq H^{d}\Delta(E)+2\rk(E)^2(L_{\Xi,\max}(E)-\mu_{\Xi}(E), \mu_{\Xi}(E)-L_{\Xi,\min}(E)).
\end{equation}
\end{thm}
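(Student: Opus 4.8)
The plan is to follow Langer's proof of the corresponding statement for schemes (\cite[Theorem 3.1]{Langer}), reducing the restriction-to-a-divisor estimate on $\sX$ to a comparison between the Harder-Narasimhan filtration of $E|_{\sD}$ and that of $E$ itself, and then controlling the error terms by Corollary \ref{cor_alphaE}. First I would let $0 = G_0 \subset G_1 \subset \cdots \subset G_s = E|_{\sD}$ be the Harder-Narasimhan filtration of the restriction, with modified slopes $\mu_{\Xi,i}$ and ranks $r_i$, and I would try to lift each $G_i$ (or rather its preimage under $E \to E|_{\sD}$) to a subsheaf $\widetilde{G}_i \subset E$ using that $\sD = \pi^{-1}(D)$ is a Cartier divisor on the stack and $D$ is general; the point is that for a general $D$ the restriction map behaves well and saturations are preserved, so that $\widetilde{G}_i|_{\sD} \supseteq G_i$ with controlled discrepancy. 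The modified slope $\mu_\Xi$ on $\sD$ and on $\sX$ are related by intersecting with $H$, so slope inequalities transfer.

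The heart of the argument is the inequality
$$
\sum_{i<j} r_i r_j (\mu_{\Xi,i} - \mu_{\Xi,j})^2 \;\leq\; H^d \Delta(E) + (\text{error from the HN filtration of } E).
$$
I would establish this by the standard "discriminant is concave along filtrations" computation: writing $\Delta(E)$ in terms of the subquotients of a filtration of $E$ lifting the HN filtration of $E|_{\sD}$, one gets $H^d\Delta(E) = \sum_i r(E)/r_i \cdot (\text{something}) - \sum_{i<j} r_i r_j(\mu_{\Xi,i}-\mu_{\Xi,j})^2$ up to boundary terms supported on $I\sX_1$, plus the discriminants of the pieces which are nonnegative by Theorem \ref{thm_Deligne-Mumford_2-dim} in the strongly semistable situation — but here I must instead bound the failure of strong semistability of the subquotients, which is exactly where the terms $L_{\Xi,\max}(E) - \mu_\Xi(E)$ and $\mu_\Xi(E) - L_{\Xi,\min}(E)$, i.e. $\alpha_\Xi(E)$, enter via Corollary \ref{cor_alphaE}. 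The orbifold Grothendieck-Riemann-Roch / Chern class bookkeeping from \S\ref{sbsec_notations} is needed to see that the codimension-one inertia contributions do not spoil the estimate, using Condition $\star$ on $\Xi$.

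The main obstacle, I expect, is the lifting step combined with the passage from $\sD$ back to $\sX$: on a Deligne-Mumford stack one must be careful that "general $D \in |D_1|$" still makes the stacky divisor $\sD$ smooth and that restriction of torsion-free sheaves and of saturations is compatible — this requires a Bertini-type statement for the tame stack $\sX$, which should hold because $D_1$ is very ample on the coarse space $X$ and the stacky locus has codimension $\geq 1$, but it needs to be checked. A secondary difficulty is keeping track of the precise shape of the error term: Langer's bound involves $2\,\mathrm{rk}(E)^2 \alpha_\Xi(E)$ and one must verify that the orbifold correction terms appearing in $\Delta$ and in the slopes are either absorbed into $\Delta(E)$ (defined via $\widetilde{\Ch}$) or cancel, so that the final inequality is literally \eqref{eqn_Deligne-Mumford-3-1}. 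Once the lifting and the GRR bookkeeping are in place, the rest is the same convexity estimate and summation as in \cite[\S 3]{Langer}, which I would state and then refer to loc. cit. for the routine details.
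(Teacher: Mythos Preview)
Your proposal diverges from the paper's proof and contains a genuine gap in the lifting step. The paper does \emph{not} work with a single divisor $\sD$ and then try to lift the Harder--Narasimhan filtration of $E|_{\sD}$ back to subsheaves of $E$. Instead it follows Langer's incidence-variety argument: one takes the full linear system $\Pi=|H|$, forms the incidence stack $\sZ\subset\Pi\times\sX$ with its projections $p,q$, and uses the \emph{relative} Harder--Narasimhan filtration $0=E_0\subset\cdots\subset E_m=q^*E$ over $\Pi$. One then restricts to a general pencil $\Lambda\subset\Pi$, so that $\sY=(p\circ\pi)^{-1}(\Lambda)\to\sX$ is a stacky blow-up along the base locus $\sB$. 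Writing $c_1(F_i|_{\sY})=q^*\sM_i+b_i\sN$ with $\sN$ the exceptional divisor, one computes $\Delta(E)H^{d-2}$ via the Chern-class identity for filtrations, applies $\Thm^2(\rk-1)$ to get $\Delta(F_i)\cdot(\text{polarization})\geq 0$ for the strongly semistable graded pieces, and uses the Hodge index theorem. The crucial slope control comes from the pushforwards $(q|_{\sY})_*(E_i|_{\sY})\subset E$, which give $\sum_{j\le i}\sM_jH^{d-1}\le \rk(\Xi)(\sum_{j\le i}r_j)\,\mu_{\Xi,\max}(E)$; this is what produces the $(L_{\Xi,\max}-\mu_\Xi)(\mu_\Xi-L_{\Xi,\min})$ term after the fdHN reduction (\S\ref{subsec_fdHN}) and \cite[Lemma~1.5]{Langer}. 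Corollary~\ref{cor_alphaE} is not used here.

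Your proposed lifting cannot work as stated. If $\widetilde G_i$ is the preimage of $G_i$ under $E\to E|_{\sD}$, then $\widetilde G_i\supset E(-\sD)$ has full rank $\rk(E)$, so its slope tells you nothing about the rank-$\sum_{j\le i}r_j$ piece of the HN filtration of $E|_{\sD}$. And there is no reason, for a general $D$, that $G_i$ extends to a saturated subsheaf of $E$ of the correct rank: the whole content of the theorem is that $E|_{\sD}$ can be strictly more unstable than $E$ (e.g.\ $E$ stable, $E|_{\sD}$ not), so no such extension need exist. The pencil/blow-up is precisely the device that manufactures honest subsheaves of $q^*E$ on $\sY$ of the right ranks, whose pushforwards to $\sX$ then give the needed inequality; the exceptional-divisor coefficients $b_i$ quantify the discrepancy you were hoping to get ``for free'' from generality of $D$.
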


\begin{thm}\label{thm_Deligne-Mumford_2}
If a torsion free sheaf $E$ on $\sX$ is  strongly modified slope semistable,  we have 
$$\Delta(E)\cdot H^{d-2}\geq 0.$$
\end{thm}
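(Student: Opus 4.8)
The plan is to reduce the higher-dimensional statement $\Delta(E)\cdot H^{d-2}\geq 0$ to the two-dimensional case already proved in Theorem \ref{thm_Deligne-Mumford_2-dim}, via a restriction argument on general hyperplane sections of the coarse moduli space. First I would fix a very ample divisor on $X$ (for instance a suitable multiple of $H$) and take $d-2$ general members $D^{(1)},\dots,D^{(d-2)}$ of its linear system; set $\sD^{(j)}=\pi^{-1}(D^{(j)})$ and let $\sC$ be the two-dimensional smooth tame Deligne--Mumford substack of $\sX$ cut out by $\sD^{(1)}\cap\cdots\cap\sD^{(d-2)}$. Genericity (Bertini for the coarse space, together with tameness, so that the stacky structure is inherited cleanly along the section) ensures $\sC$ is again a smooth tame projective Deligne--Mumford surface, and that $\Xi|_{\sC}$ still satisfies Condition $\star$. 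By the multiplicativity of Chern classes under restriction we have $\Delta(E|_{\sC}) = \Delta(E)\cdot H^{d-2}$ computed on $\sX$, reading off $H|_{\sC}$ as the induced polarization; so it suffices to show $\Delta(E|_{\sC})\geq 0$ on $\sC$.

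The second step is to prove that restriction preserves strong modified slope semistability, at least for $\sD$ general and of large enough degree. Here I would invoke the restriction theorem advertised in the introduction and proved in this appendix (the analogue of Langer's restriction theorem for stacks): if $E$ is strongly modified slope semistable on $\sX$, then its restriction to a general divisor $\sD=\pi^{-1}(D)$ of sufficiently large degree is again strongly modified slope semistable with respect to $(H|_D,\Xi|_D)$. Iterating this $d-2$ times — checking that "sufficiently large degree'' can be arranged simultaneously, e.g. by first passing to a high multiple $nH$ and using that Frobenius pullbacks commute with restriction — yields that $E|_{\sC}$ is strongly modified slope semistable on the surface $\sC$. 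Then Theorem \ref{thm_Deligne-Mumford_2-dim} applied to $\sC$ gives $\Delta(E|_{\sC})\geq 0$, and combining with the Chern class identity of the previous paragraph finishes the proof.

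The main obstacle I expect is the restriction step: one must be careful that the version of the restriction theorem available here (Theorem \ref{thm_Deligne-Mumford_1}, or its strong-semistability refinement) genuinely applies to $E$ restricted to an intermediate section, since after one restriction $\sD$ is itself a Deligne--Mumford stack of dimension $d-1$ and one needs the induction hypothesis in that dimension; so the argument is really an induction on $d$, with base case $d=2$ being Theorem \ref{thm_Deligne-Mumford_2-dim}. A secondary technical point is ensuring that a \emph{single} generic choice of the $D^{(j)}$ works for all Frobenius pullbacks $(F^k)^*E$ at once, which is needed because "strongly'' semistable refers to the whole tower; this is handled exactly as in \cite[\S 3]{Langer} by noting that the relevant bad loci are countable unions of proper closed subsets and the ground field is algebraically closed (or, more efficiently, by using the effective bounds from Corollary \ref{cor_alphaE} so that finitely many Frobenius twists suffice). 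Once these points are in place the proof is routine; the estimate in Theorem \ref{thm_Deligne-Mumford_1} with its correction term $2\rk(E)^2\big(L_{\Xi,\max}(E)-\mu_\Xi(E),\ \mu_\Xi(E)-L_{\Xi,\min}(E)\big)$ collapses because strong semistability forces $L_{\Xi,\max}(E)=\mu_\Xi(E)=L_{\Xi,\min}(E)$, so all correction terms vanish and the inequality becomes precisely $\Delta(E)\cdot H^{d-2}\geq 0$.
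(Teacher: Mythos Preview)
Your overall strategy---restrict down to a surface and invoke Theorem~\ref{thm_Deligne-Mumford_2-dim}---is exactly the route the paper takes in the step ``$\Thm^{1}(\rk)$ implies $\Thm^{5}(\rk)$.'' But two points in your write-up need correction.

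First, there is a circularity hazard. The ``restriction theorem advertised in the introduction and proved in this appendix'' that you cite in your second paragraph is Theorem~\ref{thm_restriction_theorem}, and that result is established \emph{after} and \emph{using} the Bogomolov inequality you are trying to prove; you cannot invoke it here. The correct tool, as you half-suspect, is Theorem~\ref{thm_Deligne-Mumford_1}. But that theorem is only available as part of the simultaneous induction on rank ($\Thm^{2}(\rk-1)\Rightarrow\Thm^{1}(\rk)$); your proposal speaks only of induction on $d$ and never mentions the rank induction, which is the actual engine driving the whole appendix.

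Second, your final sentence misreads how Theorem~\ref{thm_Deligne-Mumford_1} is used. It does not ``become precisely $\Delta(E)\cdot H^{d-2}\geq 0$'' once the correction terms vanish: its conclusion is \emph{conditional} on $E|_{\sD}$ being unstable, in which case it says $0<\sum r_ir_j(\mu_{\Xi,i}-\mu_{\Xi,j})^2\leq H^d\,\Delta(E)H^{d-2}$. The paper argues by contradiction: assume $\Delta(E)H^{d-2}<0$; then (correction terms being zero) Theorem~\ref{thm_Deligne-Mumford_1} forces the restriction of every $(F^k)^*E$ to a very general $\sD\in|H|$ to be semistable, so $E|_{\sD}$ is strongly semistable with the same negative discriminant; iterate to a surface and contradict Theorem~\ref{thm_Deligne-Mumford_2-dim}. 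Note also that the paper gives a second, independent derivation via ``$\Thm^{4}(\rk)\Rightarrow\Thm^{2}(\rk)$'': if $\Delta(E)H^{d-2}<0$ then for $l\gg 0$ one has $H^d\,\Delta((F^l)^*E)H^{d-2}+\rk(\Xi)^2\beta_{\rk(E)}<0$, and Theorem~\ref{thm_Deligne-Mumford_4} produces a saturated subsheaf of $(F^l)^*E$ with $\xi\in K^+$, contradicting strong semistability directly.
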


\begin{thm}\label{thm_Deligne-Mumford_3}
If a torsion free sheaf $E$ on $\sX$ is just  modified slope semistable,  then we have 
$$H^{d}\cdot \Delta(E)\cdot H^{d-2}+\rk(\Xi)^2\beta_{\rk(E)}\geq 0.$$
\end{thm}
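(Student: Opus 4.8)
The plan is to transplant Langer's proof of the characteristic-$p$ Bogomolov inequality with a correction term, \cite[Theorem 3.3]{Langer}, replacing its scheme-theoretic inputs by their stacky analogues already available here: the strong Bogomolov inequality of Theorem \ref{thm_Deligne-Mumford_2}, the finite determinacy of the Harder--Narasimhan filtration of Proposition \ref{prop_fdHN}, and the bound on Frobenius-destabilization of Corollary \ref{cor_alphaE}.

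First I would pass to a high Frobenius pullback. Since $E$ is fdHN there is a $k$ such that every quotient $\gr_i=G_i/G_{i-1}$ in the Harder--Narasimhan filtration $0=G_0\subset\cdots\subset G_l=(F^k)^*E$ of $(F^k)^*E$ for $\mu_\Xi$ is torsion free and strongly modified slope semistable, hence satisfies $\Delta(\gr_i)\cdot H^{d-2}\geq 0$ by Theorem \ref{thm_Deligne-Mumford_2}. Put $r_i=\rk(\gr_i)$, $\mu_i=\mu_\Xi(\gr_i)$, $\xi_i=c_1(\gr_i)/r_i\in H^2(\sX)_\qq$. Additivity of $\Ch$ on short exact sequences gives the standard discriminant-of-a-filtration identity
$$\frac{\Delta((F^k)^*E)}{\rk(E)}=\sum_{i=1}^l\frac{\Delta(\gr_i)}{r_i}-\frac{1}{\rk(E)}\sum_{i<j}r_i r_j\,(\xi_i-\xi_j)^2$$
in $H^4(\sX)_\qq$; intersecting with $H^{d-2}$ and discarding the non-negative terms $\Delta(\gr_i)\cdot H^{d-2}$ yields
$$\Delta((F^k)^*E)\cdot H^{d-2}\;\geq\;-\sum_{i<j}r_i r_j\,(\xi_i-\xi_j)^2\cdot H^{d-2}.$$

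Next I would apply the generalized Hodge index inequality on the coarse moduli space $X$ — for a class $\gamma$ on the normal projective variety $X$ with $\gamma\cdot H^{d-1}=0$ one has $\gamma^2\cdot H^{d-2}\leq 0$ (the quotient singularities cause no trouble and $\qq$-coefficients suffice) — to $\gamma=(\xi_i-\xi_j)-\tfrac{(\xi_i-\xi_j)\cdot H^{d-1}}{H^d}\,H$, getting $(\xi_i-\xi_j)^2\cdot H^{d-2}\leq((\xi_i-\xi_j)\cdot H^{d-1})^2/H^d$, whose numerator is a normalization constant times $(\mu_i-\mu_j)^2$. Thus $H^d\cdot\Delta((F^k)^*E)\cdot H^{d-2}\geq-\mathrm{const}\cdot\sum_{i<j}r_ir_j(\mu_i-\mu_j)^2$. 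Since $E$ is modified semistable, $\mu_{\Xi,\max}(E)=\mu_{\Xi,\min}(E)=\mu_\Xi(E)$, so $L_{\Xi,\max}(E)-\mu_\Xi(E)\leq\alpha_\Xi(E)$ and $\mu_\Xi(E)-L_{\Xi,\min}(E)\leq\alpha_\Xi(E)$; combined with $\mu_{\Xi,\max}((F^k)^*E)\leq p^kL_{\Xi,\max}(E)$ and $\mu_{\Xi,\min}((F^k)^*E)\geq p^kL_{\Xi,\min}(E)$ this forces each $\mu_i$ to lie within $O(p^k\alpha_\Xi(E))$ of the common weighted average $\overline\mu$, so $\sum_{i<j}r_ir_j(\mu_i-\mu_j)^2=\rk(E)\sum_i r_i(\mu_i-\overline\mu)^2=O(\rk(E)^2p^{2k}\alpha_\Xi(E)^2)$. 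Finally $\Ch_i$ scales by $p^{ki}$ under $(F^k)^*$, hence $\Delta((F^k)^*E)\cdot H^{d-2}=p^{2k}\,\Delta(E)\cdot H^{d-2}$; dividing by $p^{2k}$ and substituting $\alpha_\Xi(E)\leq\tfrac{\rk(E)-1}{p-1}\max_{1\le k\le m-1}\mu_\Xi(A\otimes L^k)$ from Corollary \ref{cor_alphaE} gives $H^d\cdot\Delta(E)\cdot H^{d-2}\geq-\rk(\Xi)^2\beta_{\rk(E)}$, which is the assertion.

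The hard part is not conceptual but is the bookkeeping compressed into the last paragraph: securing the Hodge index inequality in the precise form used on the quotient-singular coarse space $X$ (alternatively on a resolution, or on a surface section when $d>2$), and tracking the constants relating $\mu_\Xi$, $\deg_\Xi$ and the honest intersection numbers $H^d$, $c_1\cdot H^{d-1}$ on $X$ through the Frobenius rescaling — this is exactly where the factor $\rk(\Xi)^2$ in $\beta_{\rk(E)}$ enters, and a crude over-estimate there is harmless. Every Frobenius-theoretic ingredient — fdHN, the Cartier and $p$-curvature machinery behind Corollary \ref{cor_alphaE}, and the strong Bogomolov inequality of Theorem \ref{thm_Deligne-Mumford_2} — is already established, so beyond this the argument is Langer's, carried over unchanged.
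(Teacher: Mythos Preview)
Your proposal is correct and takes essentially the same approach as the paper: both reduce via fdHN, the discriminant-of-a-filtration identity, Hodge index, and Theorem~\ref{thm_Deligne-Mumford_2} to the intermediate inequality $H^{d}\cdot\Delta(E)H^{d-2}+\rk(E)^2\rk(\Xi)^2(L_{\Xi,\max}(E)-\mu_\Xi(E))(\mu_\Xi(E)-L_{\Xi,\min}(E))\geq 0$ (the paper's (\ref{eqn_claim})), and then bound the correction term using Corollary~\ref{cor_alphaE} together with $\mu_{\Xi,\max}(E)=\mu_{\Xi}(E)=\mu_{\Xi,\min}(E)$ for semistable $E$. The only visible difference is that the paper interposes a perturbation $H(t)=H+tD$ carried over verbatim from Langer's nef-polarization setting; since $H$ is already ample here, your direct argument suffices and nothing is lost by omitting it.
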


Before stating the last theorem, we introduce some notations.  First for torsion free  sheaves $G^\prime, G$ on $\sX$, we set 
$$\xi^{\Xi}_{G^\prime, G}=\frac{c_1(G^\prime)}{\rk(\Xi)\rk(G^\prime)}-\frac{c_1(G)}{\rk(\Xi)\rk(G)}.$$
We also set 
$$K^+:=\{D\in \text{Num}(X)| D^2H^{d-2}>0, DH^{d-1}\geq 0 \text{~for all nef~}H\}. $$
where $\text{Num}(X)=\Pic(X)\otimes\rr/\sim$ and $\sim$ is an equivalence relation meaning $L_1\sim L_2$ if and only if 
$L_1AH^{d-2}=L_2AH^{d-2}$ for all divisors $A$ on $X$.

\begin{thm}\label{thm_Deligne-Mumford_4}
If  we have 
$$H^{d}\cdot \Delta(E)\cdot H^{d-2}+\rk(\Xi)^2\beta_{\rk(E)}< 0,$$
then there exists a saturated subsheaf $E^\prime\subset E$ such that 
$\xi^{\Xi}_{E^\prime, E}\in K^{+}$.
\end{thm}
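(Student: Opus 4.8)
The statement is the contrapositive-style companion of Theorem~\ref{thm_Deligne-Mumford_3}: if the Bogomolov-type expression is negative then $E$ cannot be modified slope semistable, and moreover the destabilizing subsheaf can be chosen so that the normalized first Chern class difference lands in the positive cone $K^+$. So the plan is to follow Langer's argument in \cite[\S 3]{Langer}, transporting it to the tame Deligne-Mumford setting by working on the coarse moduli space $X$ (a surface/variety with quotient singularities) via $\pi\colon\sX\to X$, and using the orbifold Riemann-Roch formula~(\ref{eqn_orbifold_RR}) and Condition $\star$ so that all the numerical invariants ($\alpha_{\Xi,i}$, $\deg_\Xi$, $\Delta$) behave as in the scheme case up to the constant factor $\rk(\Xi)$.

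\textbf{Key steps.} First I would reduce to the case where $E$ is \emph{not} modified slope semistable: if it were semistable, then by the fdHN property (Proposition~\ref{prop_fdHN}) some Frobenius pullback $(F^{k_0})^*E$ has a Harder-Narasimhan filtration with strongly semistable quotients, and one applies Theorem~\ref{thm_Deligne-Mumford_2} together with Corollary~\ref{cor_alphaE} (which controls $\alpha_\Xi(E)$, hence $L_{\Xi,\max}-\mu_\Xi$ and $\mu_\Xi-L_{\Xi,\min}$, by $\tfrac{\rk(E)-1}{p-1}\max_k\mu_\Xi(A\otimes L^k)$) to bound $H^d\cdot\Delta(E)\cdot H^{d-2}$ from below by $-\rk(\Xi)^2\beta_{\rk(E)}$, contradicting the hypothesis; this is exactly the mechanism of Theorem~\ref{thm_Deligne-Mumford_3}. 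So we may take the maximal destabilizing subsheaf $E'\subset E$ coming from the Harder-Narasimhan filtration, with $\mu_\Xi(E')>\mu_\Xi(E)$, hence $\xi^\Xi_{E',E}\cdot H^{d-1}>0$. Second, I would run the restriction-to-a-general-divisor argument: pick a very ample $D_1$ on $X$ and a general $D\in|D_1|$, set $\sD=\pi^{-1}(D)$, and apply Theorem~\ref{thm_Deligne-Mumford_1} to the sheaf $E$ (and to its twists/Frobenius pullbacks), obtaining the Hodge-index-type inequality
$$\sum_{i<j}r_ir_j(\mu_{\Xi,i}-\mu_{\Xi,j})^2\leq H^d\Delta(E)+2\rk(E)^2\,\alpha_\Xi(E).$$
Third, combining this restriction inequality with the bound on $\alpha_\Xi(E)$ from Corollary~\ref{cor_alphaE} and the assumed negativity of $H^d\cdot\Delta(E)\cdot H^{d-2}+\rk(\Xi)^2\beta_{\rk(E)}$, one forces the Harder-Narasimhan filtration of a suitable $(F^k)^*E|_{\sD}$ to be nontrivial in a quantitatively controlled way, and then descends the resulting destabilizing subsheaf back up from $\sD$ to $\sX$ (using that $\sD\to D$ and $\sX\to X$ are compatible, and saturating). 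Fourth, for the $K^+$ conclusion, one checks the two defining conditions of $K^+$ for $\xi:=\xi^\Xi_{E',E}$: the inequality $\xi\cdot H^{d-1}\geq 0$ is immediate from $\mu_\Xi(E')\geq\mu_\Xi(E)$, and the positivity $\xi^2 H^{d-2}>0$ follows by feeding $\Delta(E')$, $\Delta(E)$, $\Delta(E/E')$ (each controlled via Theorem~\ref{thm_Deligne-Mumford_2} or its failure) into the elementary identity relating $\Delta$ of an extension to $\Delta$ of sub and quotient plus a $\xi^2$ term, exactly as in Langer's Bogomolov-inequality bookkeeping; the negativity hypothesis is what tips $\xi^2H^{d-2}$ strictly positive.

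\textbf{Main obstacle.} The delicate point is the bookkeeping that makes the \emph{strict} inequality $\xi^2 H^{d-2}>0$ come out, i.e.\ tracking the various error terms $\alpha_\Xi$ and the $\beta_{\rk}$ constant through the restriction inequality and the extension identity and verifying they do not swamp the gain from the negativity hypothesis — this is where the precise shape of $\beta_{\rk(E)}$ (quadratic in $\rk(E)(\rk(E)-1)\max_k\mu_\Xi(A\otimes L^k)/(p-1)$) matters and must be matched term by term against Langer's constant. A secondary technical nuisance is ensuring the general-divisor restriction $E|_{\sD}$ stays torsion free and that saturation/descent of subsheaves along $\pi$ and along $\sD\hookrightarrow\sX$ interacts correctly with Condition $\star$; since $X$ has only quotient singularities and $\sX$ is tame, this works as in the smooth scheme case, but it needs to be said. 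I expect the proof to essentially mirror \cite[proof of Theorem 3.4 / Corollary 3.5]{Langer}, with the only genuinely new input being the orbifold Riemann-Roch normalization already set up in \S\ref{sec_preliminaries}.
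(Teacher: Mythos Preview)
Your proposal has a genuine gap and also a substantial detour.

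\medskip

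\textbf{The detour.} Your steps 2--3 (restrict to a general $\sD\in|\pi^*D_1|$, invoke Theorem~\ref{thm_Deligne-Mumford_1}, then descend a destabilizing subsheaf from $\sD$ back to $\sX$) are unnecessary and do not appear in the paper's argument. Once Theorem~\ref{thm_Deligne-Mumford_3} tells you $E$ is not modified slope semistable, the maximal destabilizing subsheaf $E'\subset E$ already lives on $\sX$; there is nothing to restrict or to descend. Theorem~\ref{thm_Deligne-Mumford_1} is used in the induction package to prove $\Thm^5(\rk)$, not here.

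\medskip

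\textbf{The gap.} What is actually missing is the induction on rank. The paper's proof (following \cite[Theorem~7.3.3]{HL}) runs as follows: with $E'$ the maximal destabilizing subsheaf and $E''=E/E'$, one has the extension identity
\[
\frac{\Delta(E)H^{d-2}}{\rk(E)}+\frac{\rk(E)\,r'}{r''}\,\xi_{E',E}^2 H^{d-2}
=\frac{\Delta(E')H^{d-2}}{r'}+\frac{\Delta(E'')H^{d-2}}{r''},
\]
together with the elementary superadditivity $\beta_{\rk(E)}/\rk(E)\geq \beta_{r'}/r'+\beta_{r''}/r''$. Combining these with the negativity hypothesis gives a dichotomy: either $\xi_{E',E}^2H^{d-2}>0$ (and then $\xi_{E',E}\in K^+$ since also $\xi_{E',E}H^{d-1}>0$), \emph{or} at least one of $E'$, $E''$ satisfies the same negativity hypothesis $H^d\Delta(\,\cdot\,)H^{d-2}+\rk(\Xi)^2\beta_{\,\cdot\,}<0$ with strictly smaller rank. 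In the second alternative one recurses, and then lifts the resulting saturated subsheaf from $E'$ (resp.\ from a preimage in $E$ of the subsheaf of $E''$) to a saturated subsheaf of $E$; this lifting step, and the verification that the lifted $\xi$ still lies in $K^+$, is precisely the content of \cite[Theorem~7.3.3]{HL}.

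Your step~4 gestures at the extension identity but then says $\Delta(E')$, $\Delta(E'')$ are ``each controlled via Theorem~\ref{thm_Deligne-Mumford_2} or its failure''. This is where the argument breaks: Theorem~\ref{thm_Deligne-Mumford_2} needs \emph{strong} semistability, which neither $E'$ nor $E''$ possesses a priori, so you cannot bound their discriminants directly. The correct substitute is exactly the recursive step above, which you do not mention. Without it there is no reason the \emph{first} maximal destabilizing subsheaf already has $\xi^2H^{d-2}>0$.
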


We prove these theorems by induction on the rank $\rk(E)$, and following Langer's method.  We only state the parts of the proof which are different to Langer's method in smooth case and refer to \cite[\S 3]{Langer} for detailed arguments in the proof which is the same as Langer. 
For the induction process, let $\Thm^{i}(\rk)$ represent  the statement that Theorem  4.i holds for ranks $\leq \rk$ for $i=1,2,3,4$ and 
$\Thm^{5}(\rk)$ represents that Theorem \ref{thm_Deligne-Mumford_2} holds for $\rk(E)\leq \rk$.

\subsection{$\Thm^{1}(\rk)$ implies $\Thm^{5}(\rk)$}

Suppose that the torsion free sheaf $E$ is strongly modified slope semistable with respect to $(H, \Xi)$, and 
$\Delta(E)\cdot H^{d-2}<0$.  We have $L_{\Xi,\max}(E)=L_{\Xi,\min}=\mu_{\Xi}(E)$.  Theorem $\Thm^{1}(\rk)$ implies that the restriction of $E$ to 
$H$ is still modified slope semistable.  Since $E$ is strongly modified slope semistable, $(F^k)^*E$ is also strongly modified slope semistable, and its restriction to a very general element in $|H|$ is strongly modified slope semistable. Therefore by induction the restriction  of $(F^k)^*E$  to a very general element in $H_1\cap\cdots\cap H_{d-1}$ for $H_1, \cdots, H_{d-1}\in |H|$ is  strongly modified slope semistable.   Therefore we are reduced to the two dimensional Deligne-Mumford stack case. Then this is Theorem \ref{thm_Deligne-Mumford_2-dim}.

\subsection{$\Thm^{5}(\rk)$ implies $\Thm^{3}(\rk)$}

First note that in this case, 
$$\beta_{\rk(E)}=\left(\frac{\rk(E)\cdot (\rk(E)-1)(\max_{1\leq k\leq m-1}(\mu_{\Xi}(A\otimes L^{k})))}{p-1}\right)^{2}.$$ 
Our polarization is $(H, \Xi)$, we first have the following inequality:
\begin{equation}\label{eqn_claim}
H^{d}\cdot \Delta(E)H^{d-2}+\rk(E)^2\rk(\Xi)^2(L_{\Xi,\max}(E)-\mu_{\Xi}(E))(\mu_{\Xi}(E)-L_{\Xi,\min}(E))\geq 0
\end{equation}
To prove this inequality, first from the finite property fdHPN in \S \ref{subsec_fdHN} there exists a positive integer $k$ such that all the quotients in the Harder-Narasimhan 
filtration of $(F^k)^*E$ are strongly modified slope semistable.  Consider the Harder-Narasimhan filtration 
$$0=E_0\subset E_1\subset\cdots\subset E_m=(F^k)^*E$$
and let $F_i=E_i/E_{i-1}$, $r_i=\rk(F_i)$, $\mu_i=\mu_{\Xi}(F_i)$. The Hodge index theorem (holds for smooth Deligne-Mumford stacks) implies that 
\begin{align*}
\frac{\Delta((F^k)^*E)H^{d-2}}{\rk(E)}&=\sum_{i}\frac{\Delta(F_i)H^{d-2}}{r_i}-\frac{1}{\rk(E)}\sum_{i<j}r_ir_j\left(\frac{c_1(F_i)}{r_i}-\frac{c_1(F_j)}{r_j}\right)^2H^{d-2}\\
&\geq \sum_{i}\frac{\Delta(F_i)H^{d-2}}{r_i}-\frac{\rk(\Xi)^2}{H^{d}\rk(E)}\sum_{i<j}r_ir_j(\mu_i-\mu_j)^2
\end{align*}
$\Thm^{5}(\rk)$ implies that $\Delta(F_i)H^{d-2}\geq 0$.  Therefore by \cite[Lemma 1.4]{Langer}, we have 
\begin{multline*}\frac{H^{d}\cdot \Delta(E)H^{d-2}}{\rk(E)}\geq \\
-\rk(E)\rk(\Xi)^2\left(\mu_{\Xi,\max}((F^k)^*E)-\mu_{\Xi}((F^k)^*E)\right)\left(\mu_{\Xi}((F^k)^*E)-\mu_{\Xi,\min}((F^k)^*E)\right)
\end{multline*}
Both sides are divided by $p^{2k}$, we get:
$$H^{d}\cdot \Delta(E)H^{d-2}+\rk(E)^2\rk(\Xi)^2(L_{\Xi,\max}(E)-\mu_{\Xi}(E))(\mu_{\Xi}(E)-L_{\Xi,\min}(E))\geq 0.$$

It is ready to prove  $\Thm^{3}(\rk)$.  Suppose that $E$ is just modified slope semistable.
We aim to use (\ref{eqn_claim}) and Corollary \ref{cor_alphaE}.  The method is the same as in \cite[\S 3.6]{Langer}, and we take 
an ample divisor $D$ on $X$ and set $H(t)=H+tD$.  Similar method shows that the Harder-Narasimhan filtration of $E$ with respect to 
$(H(t), \Xi)$ is independent of $t$ when $t$ is positively small.  Let $0=E_0\subset E_1\subset\cdots\subset E_m=E$ be the Harder-Narasimhan filtration with respect to $(H(t), \Xi)$.  We have (since $E$ is modified slope semistable)
$$\mu_{\Xi, H}(E)\geq \mu_{\Xi, H}(E_1)=\lim_{t\to 0}\mu_{\Xi, H(t)}(E_1)\geq \lim_{t\to 0}\mu_{\Xi, H(t)}(E)=\mu_{\Xi,H}(E).$$
Hence
$$\lim_{t\to 0}\mu_{\Xi,\max, H(t)}(E)=\lim_{t\to 0}\mu_{\Xi,\max, H(t)}(E_1)=\mu_{\Xi,H}(E),$$
and similarly, 
$$\lim_{t\to 0}\mu_{\Xi,\min, H(t)}(E)=\mu_{\Xi,H}(E).$$
Thus we can apply (\ref{eqn_claim}) and Corollary \ref{cor_alphaE}, we get the result
$$H^{d}\cdot \Delta(E)H^{d-2}+\rk(\Xi)^2\beta_{\rk(E)}\geq 0.$$

\subsection{$\Thm^{3}(\rk)$ implies $\Thm^{4}(\rk)$}

If we have the condition in $\Thm^{4}(\rk)$, i.e., $H^{d}\cdot \Delta(E)H^{d-2}+\rk(\Xi)^2\beta_{\rk(E)}< 0$. Then from $\Thm^{3}(\rk)$, $E$ is not modified slope semistable.  Let $E^\prime\subset E$ be the maximal destabilizing subsheaf of $E$ and set $E^{\prime\prime}=E/E^{\prime}$, 
$r^\prime=\rk(E^\prime)$, $r^{\prime\prime}=\rk(E^{\prime\prime})$.  First we calculate:
$$\frac{\Delta(E)H^{d-2}}{\rk(E)}
+\frac{r r^{\prime}\xi^2_{E^\prime, E}H^{d-2}}{r^{\prime\prime}}
=\frac{\Delta(E^\prime)H^{d-2}}{r^\prime}+\frac{\Delta(E^{\prime\prime})H^{d-2}}{r^{\prime\prime}}.$$
We also have $\rk(\Xi)^2\frac{\beta_{\rk(E)}}{\rk(E)}\geq \rk(\Xi)^2(\frac{\beta_{r^\prime}}{r^\prime}+\frac{\beta_{r^{\prime\prime}}}{r^{\prime\prime}})$. Since $H^{d}\cdot \Delta(E)H^{d-2}+\rk(\Xi)^2\beta_{\rk(E)}< 0$, and we require $H^d>0$, either $\xi^2_{E^\prime, E}>0$ or at least one of the 
$H^{d}\cdot\Delta(E^\prime)H^{d-2}+\rk(\Xi)^2\beta_{r^\prime}$ and $H^{d}\cdot\Delta(E^{\prime\prime})H^{d-2}+\rk(\Xi)^2\beta_{r^{\prime\prime}}$ is negative. 
Therefore the same argument as in \cite[Theorem 7.3.3]{HL} gives the result.

\subsection{$\Thm^{4}(\rk)$ implies $\Thm^{2}(\rk)$}

Suppose that $\Delta(E)H^{d-2}<0$.  The condition in $\Thm^{4}(\rk)$, applying to $(F^l)^*E$ (since $E$ is strongly modified slope semistable by the condition in $\Thm^{2}(\rk)$),  is:
$$H^{d}\cdot \Delta((F^l)^*E)H^{d-2}+\rk(\Xi)^2\beta_{\rk(E)}< 0$$
which is equivalent to 
$$l>\frac{1}{2}\log_{p}\left(-\frac{\rk(\Xi)^2\beta_{\rk(E)}}{H^{d}\cdot \Delta(E)H^{d-2}}\right).$$

Then for large $l$, there exists a saturated torsion free subsheaf $E^\prime\subset (F^l)^*E$ such that 
$\xi_{E^\prime, (F^l)^*E}\in K^+$. By ``self-duality" property of $K^+$ we have
$\xi_{E^\prime, (F^l)^*E}H^{d-1}>0$, which means that the sheaf $E$ is not strongly modified semistable, a contradiction. 

\subsection{$\Thm^{2}(\rk-1)$ implies $\Thm^{1}(\rk)$}

We use $\Pi=|H|$ to denote the complete linear system, and let $Z:=\{(D,x)\in \Pi\times X: x\in D\}$ be the incidence variety. 
Let 
$$p: Z\to \Pi; \quad  q:  Z\to X$$
be the corresponding projections.  For each $s\in \Pi$, let $Z_s$ be the scheme theoretic fiber of $p$ over the point $s$. 
Consider the following cartisian diagram:
\[
\xymatrix{
&\sZ\ar[dl]_{p\circ\pi}\ar[r]^{q}\ar[d]_{\pi}& \sX\ar[d]^{\pi}\\
\Pi&\ar[l]_{p} Z\ar[r]^{q}& X
}
\]
Then $\sZ$ is a Deligne-Mumford stack which is given by $\{(\pi^{-1}(D),x): (D,x)\in \Pi\times X,  x\in D\}$. 
The generating sheaf $\Xi$ on $\sX$ is  pullback under $q$ and gives a generating sheaf $q^*\Xi$ on $\sZ$ which is relative to $\Pi$. 

We work on the sheaf $q^*E$ for a torsion free sheaf $E$ on $\sX$, and let 
$$0=E_0\subset E_1\subset\cdots\subset E_m=q^*E$$
be the relative Harder-Narasimhan filtration with respect to $p\circ\pi$.
This means that there exists an open subset $U\subset \Pi$ such that all $F_i=E_i/E_{i-1}$ are flat over $U$ and for each $s\in U$ the fibers 
$(E_{\bullet})_s$ is the  Harder-Narasimhan filtration of $E_s=q^*E|_{\sZ_s}$ for $\sZ_s=\pi^{-1}(Z_s)$.  
From the proof of in \cite[\S 3.9]{Langer}, the relative Harder-Narasimhan filtration
is actually the Harder-Narasimhan filtration of $q^*E$ with respect to 
$$(p^*\sO_{\Pi}(1)^{\dim(\Pi)}q^*H, q^*\Xi).$$

By the finite property in \S \ref{subsec_fdHN}, for the sheaf $q^*E$,  there exists a positive integer $k$ such that all the quotients in the Harder-Narasimhan filtration of $(F^k)^*(q^*E)=q^*((F^k)^*E)$ are strongly modified semistable. 
We will prove  the inequality  (\ref{eqn_Deligne-Mumford-3-1}), and from \cite[Lemma 1.5]{Langer}, when applying to the polygons of the Harder-Narasimhan filtration for modified slopes, we just prove the case that all the graded pieces $F_i$'s are strongly modified slope semistable with respect to 
$(p^*\sO_{\Pi}(1)^{\dim(\Pi)}q^*H, q^*\Xi)$.

We perform the same argument as in \cite[\S 3.9]{Langer}, and let $\Lambda\subset \Pi$ be a pencil. Set
$Y=p^{-1}(\Lambda)$, and $\sY=(p\circ\pi)^{-1}(\Lambda)\subset \sZ$. 
Since $q|_{Y}: Y\to X$ is the blow up of $X$ along the base locus $B$ of $\Lambda$, we can view 
$q|_{\sY}: \sY\to \sX$ to be  the stacky blow up of $\sX$ along the  locus $\sB=\pi^{-1}(B)$.
If $d\geq 3$, then $B$ is a smooth connected variety. So $\sB$ is a smooth connected substack, and there is only one exceptional divisor 
$\sN$ for $q|_{\sY}$.  We write down 
$$c_1(F_i|_{Y})=q|_{\sY}^*\sM_i+ b_i \sN$$
where $\sM_i$ are divisors on $\sX$ which are pullbacks of divisors $M_i$ on $X$  and $b_i$ are rational numbers. 
If the dimension $d=2$, then $B$ consists of $N=H^d$ distinct points and $\sB$ consists of $N$ distinct  stacky points.  Let 
$\sN_1,\cdots, \sN_{N}$ be the exceptional divisors of $q|_{\sY}$.  There exist rational numbers $b_{ij}$ and divisors  $\sM_i$ such that 
$$c_1(F_i|_{Y})=q|_{\sY}^*\sM_i+ \sum_{j}b_{ij} \sN_j.$$
Let $b_i=(\sum_{j}b_{ij})/N$. We have
$$\mu_{\Xi,i}=\frac{c_1(F_i|_{Y})p^*\sO_{\Pi}(1)q^*H^{d-2}}{r_i\rk(\Xi)}=\frac{\sM_i\cdot H^{d-1}+b_iN}{r_i\rk(\Xi)}.$$

$\Thm^{2}(\rk-1)$ implies that $\Delta(F_j|_{Y})p^*\sO_{\Pi}(1)q^*H^{d-2}\geq 0$  for every $j$. We calculate
\begin{align*}
\frac{N\Delta(E)q|_{\sY}^*H^{d-2}}{\rk(E)}&=\sum_{i}\frac{N\Delta(F_i|_{\sY})q|_{\sY}^*H^{d-2}}{r_i}
-\frac{N}{\rk(E)}\sum_{i<j}r_ir_j\left(\frac{c_1(F_i|_{\sY})}{r_i}-\frac{c_1(F_j|_{\sY})}{r_j}\right)^2\cdot q|_{\sY}^*H^{d-2}\\
&\geq \frac{N}{\rk(E)}\sum_{i<j}r_ir_j\left(N\left(\frac{b_i}{r_i}-\frac{b_j}{r_j}\right)^2-\left(\frac{\sM_i}{r_i}-\frac{\sM_j}{r_j}\right)^2H^{d-2}\right)\\
&\geq \frac{1}{\rk(E)}\sum_{i<j}r_ir_j\left((N)^2\left(\frac{b_i}{r_i}-\frac{b_j}{r_j}\right)^2-\left(\frac{\sM_iH^{d-1}}{r_i}-\frac{\sM_jH^{d-1}}{r_j}\right)^2\right).
\end{align*}
The last inequality is from Hodge index theorem for smooth Deligne-Mumford stacks, and from the slope $\mu_{\Xi,i}$, the last expression above  gives 
$$2\sum_{i}N b_i\mu_{\Xi, i}-\frac{1}{\rk(E)}\sum_{i<j}r_i r_j (\mu_{\Xi,i}-\mu_{\Xi,j})^2.$$

To prove the claim, first $(q|_{\sY})_*(E_i|_{\sY})\subset E$  implies that 
$$\frac{\sum_{j\leq i}\sM_jH^{d-1}}{\rk(\Xi)\sum_{j\leq i}r_j}\leq \mu_{\Xi, \max}(E)$$
which gives the inequality:
\begin{equation}
\sum_{j\leq i}b_j N\geq \sum_{j\leq i}\rk(\Xi)r_j(\mu_{\Xi, j}-\mu_{\Xi, \max}(E))
\end{equation}
Therefore
\begin{align*}
\sum_{i}Nb_i \mu_{\Xi,i}&=\sum_{i}\left(\sum_{i<j}Nb_j\right)(\mu_{\Xi,i}-\mu_{\Xi,i+1})\\
&\geq 
\sum_{i}\left( \sum_{j\leq i}\rk(\Xi)r_j(\mu_{\Xi, j}-\mu_{\Xi, \max}(E))\right)(\mu_{\Xi,i}-\mu_{\Xi,i+1})\\
&=\rk(\Xi)\cdot \sum_{i<j}\frac{r_ir_j}{\rk(E)}(\mu_{\Xi,i}-\mu_{\Xi,j})^2+\rk(E)(\mu_{\Xi}(E)-\mu_{\Xi,\max}(E))(\mu_{\Xi}(E)-\mu_{\Xi,\min}(E)).
\end{align*}
So we get:
$$\frac{N\Delta(E)q|_{\sY}^*H^{d-2}}{\rk(E)}\geq 
 \sum_{i<j}\frac{2\rk(\Xi)-1}{\rk(E)}r_i r_j(\mu_{\Xi,i}-\mu_{\Xi,j})^2+2\rk(E)(\mu_{\Xi}(E)-\mu_{\Xi,\max}(E))(\mu_{\Xi}(E)-\mu_{\Xi,\min}(E)).$$
 
 \subsection{Restriction  theorems}

We generalize the restriction theorem of Langer to  smooth Deligne-Mumford stacks in higher dimensions.
We give a general statement for the (\ref{eqn_claim}).
Recall from Corollary \ref{cor_alphaE}, 
$$\alpha_{\Xi}(E):=\max(L_{\Xi,\max}(E)-\mu_{\Xi, \max}(E), \mu_{\Xi, \min}(E)-L_{\Xi,\min}(E)).$$
Let $L\in \Pic(\sX)$ be a $\pi$-ample line bundle on $\sX$ such that there exists $m\in\zz_{>0}$ making 
$\bigoplus_{i=1}^{m-1}L^{\otimes i}$ a generating sheaf.    
Let $A$ be a nef divisor for $X$,  such that $\pi_{*}(T_{X}\otimes\sum_{k=1}^{m-1}(L^{k}))(A)$ is globally generated.
Then we have
$$\alpha_{\Xi}(E)\leq \frac{\rk(E)-1}{p-1} (\max_{1\leq k\leq m-1}\{\mu_{\Xi}(A\otimes (L^{k}))\}).$$

\begin{thm}\label{thm_general_inequality}
Let $E$ be a torsion free sheaf on a smooth Deligne-Mumford stack $\sX$. Then we have 
\begin{equation}\label{eqn_claim1}
H^{d}\cdot \Delta(E)H^{d-2}+\rk(E)^2\rk(\Xi)^2(L_{\Xi,\max}(E)-\mu_{\Xi}(E))(\mu_{\Xi}(E)-L_{\Xi,\min}(E))\geq 0
\end{equation}
and 
\begin{equation}\label{eqn_claim2}
H^{d}\cdot \Delta(E)H^{d-2}+\rk(E)^2\rk(\Xi)^2(\mu_{\Xi,\max}(E)-\mu_{\Xi}(E))(\mu_{\Xi}(E)-\mu_{\Xi,\min}(E))\geq 0
\end{equation}
\end{thm}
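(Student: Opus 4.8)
The first inequality (\ref{eqn_claim1}) is, verbatim, the inequality (\ref{eqn_claim}) established in the step where $\Thm^{5}(\rk)$ was shown to imply $\Thm^{3}(\rk)$, so I would only recall that argument. By the fdHN property (Proposition \ref{prop_fdHN}) there is an integer $k$ such that the graded pieces $F_i$ of the Harder-Narasimhan filtration of $(F^k)^*E$ are strongly modified slope semistable. The Hodge index theorem on $\sX$, valid since $\sX$ is smooth with projective coarse moduli space (so the numerical intersection form $D\mapsto D^2H^{d-2}$ has signature $(1,\rho-1)$), gives
$$
\frac{\Delta((F^k)^*E)H^{d-2}}{\rk(E)}\;\ge\;\sum_i\frac{\Delta(F_i)H^{d-2}}{r_i}-\frac{\rk(\Xi)^2}{H^{d}\rk(E)}\sum_{i<j}r_i r_j(\mu_{\Xi,i}-\mu_{\Xi,j})^2.
$$
Theorem \ref{thm_Deligne-Mumford_2} (which is $\Thm^{5}$) makes the first sum nonnegative, \cite[Lemma 1.4]{Langer} bounds the second by $\rk(E)^2(\mu_{\Xi,\max}-\mu_{\Xi})(\mu_{\Xi}-\mu_{\Xi,\min})$ evaluated on $(F^k)^*E$, and dividing by $p^{2k}$ together with the monotonicity estimates $\mu_{\Xi,\max}((F^k)^*E)/p^k\le L_{\Xi,\max}(E)$ and $\mu_{\Xi,\min}((F^k)^*E)/p^k\ge L_{\Xi,\min}(E)$ yields (\ref{eqn_claim1}).

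For the sharper inequality (\ref{eqn_claim2}) the plan is to run the identical chain of estimates, but applied directly to a Frobenius twist whose Harder-Narasimhan filtration already consists of strongly semistable pieces, and then to renormalize instead of passing to the limit. Fix the integer $k_0$ furnished by fdHN, let $0=E_0\subset\cdots\subset E_m=(F^{k_0})^*E$ be its Harder-Narasimhan filtration with strongly modified slope semistable quotients $F_i$, feed it into the Hodge index inequality above, drop the nonnegative term $\sum_i\Delta(F_i)H^{d-2}/r_i$ by Theorem \ref{thm_Deligne-Mumford_2}, and apply \cite[Lemma 1.4]{Langer} to $(F^{k_0})^*E$. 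This produces (\ref{eqn_claim2}) with $E$ replaced by $(F^{k_0})^*E$; dividing by $p^{2k_0}$ and using $\Delta((F^{k_0})^*E)=p^{2k_0}\Delta(E)$ and $\mu_{\Xi}((F^{k_0})^*E)=p^{k_0}\mu_{\Xi}(E)$ then returns (\ref{eqn_claim2}) for $E$, once one identifies $\mu_{\Xi,\max}((F^{k_0})^*E)/p^{k_0}$ with $\mu_{\Xi,\max}(E)$ and $\mu_{\Xi,\min}((F^{k_0})^*E)/p^{k_0}$ with $\mu_{\Xi,\min}(E)$.

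The hard part is precisely this last identification of the renormalized extremal slopes: a priori one only has $\mu_{\Xi,\max}((F^{k_0})^*E)/p^{k_0}\ge\mu_{\Xi,\max}(E)$, because that sequence is weakly increasing in $k_0$. One must therefore argue that for the fdHN integer the Harder-Narasimhan polygon has already stabilized, so that no further Frobenius pullback changes the extremal slopes (using Proposition \ref{prop_fdHN} and the nesting $HNP_k(E)\subset HNP_{k+1}(E)$), or else absorb the residual discrepancies $L_{\Xi,\max}(E)-\mu_{\Xi,\max}(E)$ and $\mu_{\Xi,\min}(E)-L_{\Xi,\min}(E)$ via the estimate of Corollary \ref{cor_alphaE}. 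A routine but unavoidable secondary matter, recurring throughout, is the bookkeeping around the exactness of $F_{\Xi}$ and the Condition $\star$ normalization $\mu_{\Xi}(E)=\rk(\Xi)\deg_{\Xi}(E)/\rk(E)$, which is exactly what makes the Hodge index computation and \cite[Lemma 1.4]{Langer} transcribe verbatim from the scheme case, by way of \cite[Proposition 3.18]{Nironi} and Remark \ref{rmk_stability_quotient}.
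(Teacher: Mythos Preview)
Your treatment of (\ref{eqn_claim1}) is correct and matches the paper: it simply points back to the argument for (\ref{eqn_claim}).

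Your approach to (\ref{eqn_claim2}) has a genuine gap, and it is exactly the one you flag as ``the hard part'' but do not resolve. Once the Harder--Narasimhan pieces of $(F^{k_0})^*E$ are strongly semistable, the sequence $\mu_{\Xi,\max}((F^{k})^*E)/p^{k}$ has stabilised, so by definition
\[
\frac{\mu_{\Xi,\max}((F^{k_0})^*E)}{p^{k_0}}=L_{\Xi,\max}(E),\qquad
\frac{\mu_{\Xi,\min}((F^{k_0})^*E)}{p^{k_0}}=L_{\Xi,\min}(E),
\]
and in general $L_{\Xi,\max}(E)>\mu_{\Xi,\max}(E)$. Thus your argument, after dividing by $p^{2k_0}$, lands back on (\ref{eqn_claim1}), not on the sharper (\ref{eqn_claim2}). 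Your first proposed fix, that ``the HN polygon has already stabilised'', is precisely the statement that the renormalised extremal slopes equal $L_{\Xi,\max}(E)$ and $L_{\Xi,\min}(E)$; it does not make them equal to $\mu_{\Xi,\max}(E)$ and $\mu_{\Xi,\min}(E)$. Your second proposed fix, absorbing the discrepancy via Corollary~\ref{cor_alphaE}, goes in the wrong direction: since $(L_{\Xi,\max}-\mu_{\Xi})(\mu_{\Xi}-L_{\Xi,\min})\ge(\mu_{\Xi,\max}-\mu_{\Xi})(\mu_{\Xi}-\mu_{\Xi,\min})$, inequality (\ref{eqn_claim2}) is strictly stronger than (\ref{eqn_claim1}), and no bound on $\alpha_{\Xi}(E)$ will bridge that gap without introducing an unwanted $\beta_{\rk}$ term.

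The paper instead defers to \cite[Theorem~5.1]{Langer}, whose argument is organised differently: one works with the Harder--Narasimhan filtration of $E$ itself (no Frobenius twist), so that the extremal slopes appearing are $\mu_{\Xi,\max}(E)$ and $\mu_{\Xi,\min}(E)$ from the outset, and then feeds (\ref{eqn_claim1}) into the estimate for each semistable graded piece $F_i$, combining this with the Hodge index decomposition and the polygon comparison of \cite[Lemma~1.4, Lemma~1.5]{Langer}. The point is that the passage to Frobenius pullbacks is pushed inside the pieces $F_i$ (via (\ref{eqn_claim1}) applied to them) rather than to $E$ globally, so that the outer slopes remain $\mu_{\Xi,\max}(E)$ and $\mu_{\Xi,\min}(E)$.
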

\begin{proof}
The proof of (\ref{eqn_claim1}) is the same as in Claim (\ref{eqn_claim}). The proof of formula  (\ref{eqn_claim2}) is the same as 
\cite[Theorem 5.1]{Langer}.
\end{proof}

\begin{thm}\label{thm_restriction_theorem}
Let $E$ be a torsion free sheaf of rank $\rk(E)\geq 2$ on a smooth Deligne-Mumford stack $\sX$.  Suppose that $E$ is slope modified  stable with respect to $(\Xi, \sO_{X}(1)=H)$. Let 
$D\subset |mH|$  be a normal divisor  such that $E|_{\sD}$ has no torsion where $\sD=\pi^{-1}(D))$.  If 
$$m>\lfloor\frac{\rk(E)-1}{\rk(E)}\Delta(E)H^{d-2}+\frac{1}{H^d \rk(E)(\rk(E)-1)}+\frac{(\rk(E)-1)\beta_{\rk}}{H^d \rk(E)} \rfloor$$
then $E|_{\sD}$ is slope modified stable with respect to $(\Xi|_{\sD}, H|_{\sD})$.
\end{thm}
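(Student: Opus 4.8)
The plan is to adapt Langer's effective restriction theorem for $\mu$-stability (\cite[\S 5]{Langer}) to the Deligne-Mumford stack $\sX$, substituting the orbifold tools established above for their scheme-theoretic counterparts: the Bogomolov inequalities of Theorem \ref{thm_general_inequality}, the bound of Corollary \ref{cor_alphaE}, the restriction estimate of Theorem \ref{thm_Deligne-Mumford_1}, and orbifold Grothendieck-Riemann-Roch (\cite{Toen}) for the intersection-theoretic computations on $\sX$ and on $\sD$.

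I would argue by contradiction, assuming that $E|_{\sD}$ is not modified slope stable. Since $E|_{\sD}$ is torsion free, there is a saturated subsheaf $F\subset E|_{\sD}$ with $0<\rk(F)<\rk(E)$ and $\mu_{\Xi}(F)\ge\mu_{\Xi}(E|_{\sD})$. The first point to record is the identity $\mu_{\Xi}(E|_{\sD})=m\,\mu_{\Xi}(E)$, which uses that $\sD=\pi^{-1}(D)$ is the effective Cartier divisor on $\sX$ with $\sO_{\sX}(\sD)=\pi^{*}\sO_{X}(mH)$, so that $\deg_{\Xi}(E|_{\sD})=c_{1}(E)\cdot[\sD]\cdot H^{d-2}=m\deg_{\Xi}(E)$, the codimension-one inertia contributions cancelling exactly as in Remark \ref{rmk_stability_quotient}. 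Then I would form the elementary modification $E':=\ker\bigl(E\twoheadrightarrow\iota_{*}(E|_{\sD}/F)\bigr)$ along $\iota\colon\sD\hookrightarrow\sX$, which fits into the two short exact sequences $0\to E'\to E\to\iota_{*}(E|_{\sD}/F)\to 0$ and $0\to E(-\sD)\to E'\to\iota_{*}F\to 0$ on $\sX$. Additivity of $\widetilde{\Ch}$, the projection formula, and GRR for the regular closed immersion $\iota$ on the tame stack give $\rk(E')=\rk(E)$ and, after capping with $H^{d-2}$ and invoking $\mu_{\Xi}(F)\ge\mu_{\Xi}(E|_{\sD})$,
$$\Delta(E')H^{d-2}\le\Delta(E)H^{d-2}-\rk(F)\bigl(\rk(E)-\rk(F)\bigr)m^{2}H^{d}\le\Delta(E)H^{d-2}-(\rk(E)-1)m^{2}H^{d}.$$

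The decisive step is to feed this into the universal Bogomolov inequality (\ref{eqn_claim2}) of Theorem \ref{thm_general_inequality} applied to the torsion free sheaf $E'$, for which one must bound $\bigl(\mu_{\Xi,\max}(E')-\mu_{\Xi}(E')\bigr)\bigl(\mu_{\Xi}(E')-\mu_{\Xi,\min}(E')\bigr)$. Here one uses that $E'$ is saturated in the $\mu_{\Xi}$-stable sheaf $E$, so that every proper saturated subsheaf of $E'$ has modified slope strictly below $\mu_{\Xi}(E)$ with a gap controlled by $1/\bigl(H^{d}\rk(E)(\rk(E)-1)\bigr)$ coming from the lattice of admissible degrees on $\sX$, while Corollary \ref{cor_alphaE} bounds the characteristic-$p$ quantity $\alpha_{\Xi}(E)$ and lets one absorb the difference between $\mu_{\Xi,\max}(E')$ and $L_{\Xi,\max}(E')$ (and symmetrically for the minima) into $\beta_{\rk}$; the remaining subcase, in which $E|_{\sD}$ fails to be even modified semistable, is handled separately and more cheaply by applying Theorem \ref{thm_Deligne-Mumford_1} to the Harder-Narasimhan filtration of $E|_{\sD}$, whose consecutive distinct slopes are multiples of $m/N$ for a fixed $N$ depending only on $\rk(E)$ and $\rk(\Xi)$. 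Combining the displayed discriminant bound with (\ref{eqn_claim2}) for $E'$ and with these slope estimates, the coefficient of $m^{2}$ is negative, and solving the resulting quadratic inequality for $m$ produces, for any counterexample, the upper bound
$$m\le\frac{\rk(E)-1}{\rk(E)}\Delta(E)H^{d-2}+\frac{1}{H^{d}\rk(E)(\rk(E)-1)}+\frac{(\rk(E)-1)\beta_{\rk}}{H^{d}\rk(E)},$$
contradicting the assumed lower bound on $m$; hence $E|_{\sD}$ is modified slope stable.

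I expect the main obstacle to be not the numerics, which are Langer's, but checking that the elementary-modification construction and every Chern-class and degree identity really survive on the stack: that $\sD$ is smooth and normal for admissible $D$, that the projection formula $\iota_{*}(\alpha)\cdot H^{d-2}=\alpha\cdot(H|_{\sD})^{d-2}$ and the formula for $\widetilde{\Ch}(\iota_{*}\mathcal{Q})$ hold with the orbifold (Chen-Ruan) normalization of Chern classes on $\sX$, and, above all, that Condition $\star$ makes $\mu_{\Xi}(E|_{\sD})=m\,\mu_{\Xi}(E)$ hold with no stray inertia terms contaminating the slope inequalities --- this is the stack analogue of an innocuous scheme identity and is exactly where Condition $\star$ and Remark \ref{rmk_stability_quotient} are indispensable. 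A secondary point is ensuring that the (possibly iterated) modifications stay torsion free along $\sD$, which is where the normality of $D$ and the torsion-freeness hypothesis on $E|_{\sD}$ enter.
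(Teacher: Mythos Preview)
Your proposal is correct and follows the same route as the paper, which simply defers to \cite[Theorem 5.2]{Langer}: the elementary modification along a destabilizing subsheaf of $E|_{\sD}$, the discriminant computation, and the application of inequality (\ref{eqn_claim2}) to the modified sheaf are precisely Langer's steps transported to the stack using the tools you cite. Your separate subcase invoking Theorem \ref{thm_Deligne-Mumford_1} when $E|_{\sD}$ fails to be semistable is superfluous, since the elementary-modification argument already applies to any saturated $F\subset E|_{\sD}$ with $\mu_{\Xi}(F)\ge\mu_{\Xi}(E|_{\sD})$, but it does no harm.
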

\begin{proof}
The proof is the same as 
\cite[Theorem 5.2]{Langer}.
\end{proof}

\section{Bogomolov's inequality for Higgs sheaves}\Label{appendix_B}

\begin{center}
By Yunfeng Jiang, Promit Kundu and Hao Max Sun
\end{center}

\vspace{5 mm}

Let $k$ be an algebraically closed field of characteristic $p\geq0$
and $\mathcal{X}$ be a smooth tame projective Deligne-Mumford stack
of dimension $d$ over $k$ with coarse moduli space
$\pi:\mathcal{X}\rightarrow X$. Let $\Xi$ be a generating sheaf on
$\mathcal{X}$ satisfying Condition $\star$ in \ref{condition_star}, 
and let $H$ an ample divisor on $X$. 

\begin{defn}
A Higgs sheaf $(E, \theta)$ is a pair consisting of a coherent sheaf
$E\in \Coh(\mathcal{X})$ and an
$\mathcal{O}_{\mathcal{X}}$-homomorphism $\theta: E\rightarrow
E\otimes\Omega_{\mathcal{X}}$ satisfying the integrability condition
$\theta\wedge\theta=0$. We say a Higgs sheaf $(E, \theta)$ a system
of Hodge sheaves if there is a decomposition $E=\oplus E^i$ such
that $\theta: E^i\rightarrow
E^{i-1}\otimes_{\mathcal{O}_{\mathcal{X}}}\Omega_{\mathcal{X}}$
\begin{enumerate}
\item  We say that $(E, \theta)$ is slope
semistable if $\mu_{\Xi}(E^\prime)\leq\mu_{\Xi}(E)$ for every Higgs
subsheaf $(E^\prime,\theta^\prime)$ of $(E,\theta)$.

\item A system of Hodge sheaves $(E, \theta)$ is slope semistable if the
inequality $\mu_{\Xi}(E^\prime)\leq\mu_{\Xi}(E)$ is satisfied for
every subsystem of Hodge sheaves $(E^\prime,\theta^\prime)$ of
$(E,\theta)$.
\end{enumerate}
\end{defn}

We recall the main results of Ogus and Vologodsky \cite{OV}, where
the theory is for schemes, but in \'etale topology it works for
Deligne-Mumford stacks.

Assume that $p>0$. Let $S$ be a scheme over $k$ and
$f:\mathcal{X}\rightarrow S$ be a morphism of stacks over $k$. A
lifting of $\mathcal{X}/S$ modulo $p^2$ is a morphism
$\widetilde{f}: \widetilde{\mathcal{X}}\rightarrow \widetilde{S}$ of
flat $\mathbb{Z}/p^2\mathbb{Z}$-stacks such that $f$ is the base
change of $\widetilde{f}$ by the closed embedding $S\rightarrow
\widetilde{S}$ defined by $p$. Let $\mbox{MIC}_{p-1}(\mathcal{X}/S)$
be the category of $\mathcal{O}_{\mathcal{X}}$-modules with an
integrable connection whose $p$-curvature is nilpotent of level
$\leq p-1$. Let $\mbox{HIG}_{p-1}(\mathcal{X}^{(1)}/S)$ denote the
category of Higgs $\mathcal{O}_{\mathcal{X}^{(1)}}$-modules with a
nilpotent Higgs field of level $\leq p-1$. We have the following
theorem of Ogus and Vologodsky (\cite[Theorem 2.8]{OV})

\begin{thm}
If $f:\mathcal{X}\rightarrow S$ is a smooth morphism with a lifting
$\widetilde{\mathcal{X}}^{(1)}\rightarrow \widetilde{S}$ of
$\mathcal{X}^{(1)}\rightarrow S$ modulo $p^2$, then the Cartier
transform
$$C_{\mathcal{X}/S}:\mbox{MIC}_{p-1}(\mathcal{X}/S)\rightarrow\mbox{HIG}_{p-1}(\mathcal{X}^{(1)}/S)$$
defines an equivalence of categories with quasi-inverse
$$C^{-1}_{\mathcal{X}/S}:\mbox{HIG}_{p-1}(\mathcal{X}^{(1)}/S)\rightarrow\mbox{MIC}_{p-1}(\mathcal{X}/S).$$
\end{thm}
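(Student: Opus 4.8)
The plan is to reduce the assertion to the case of schemes, which is exactly \cite[Theorem 2.8]{OV}, by using that both the source and target categories and the construction of the Cartier transform itself are local for the \'etale topology. First I would fix a smooth \'etale atlas $u:X_0\to\mathcal{X}$ with $X_0$ a scheme and form the groupoid $X_1:=X_0\times_{\mathcal{X}}X_0\toto X_0$ together with the triple fibre product $X_2$; since $f:\mathcal{X}\to S$ is smooth, each $X_i\to S$ is smooth. The \'etale atlas of $\mathcal{X}^{(1)}$ induced by $u$ is automatically liftable, so pulling back the global lifting $\widetilde{\mathcal{X}}^{(1)}\to\widetilde{S}$ modulo $p^2$ yields a compatible system of liftings $\widetilde{X}_i^{(1)}\to\widetilde{S}$ of $X_i^{(1)}\to S$, together with the descent isomorphisms among them.

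Next I would record that the two categories in play satisfy \'etale descent: $U\mapsto\mbox{MIC}_{p-1}(U/S)$ on the small \'etale site of $\mathcal{X}$, and $U\mapsto\mbox{HIG}_{p-1}(U^{(1)}/S)$ on that of $\mathcal{X}^{(1)}$, are stacks, because quasi-coherent modules with an integrable connection (resp.\ a Higgs field) glue from \'etale-local data, and the conditions that the $p$-curvature is nilpotent of level $\le p-1$ (resp.\ the Higgs field is nilpotent of level $\le p-1$) are \'etale-local. Thus an object of $\mbox{MIC}_{p-1}(\mathcal{X}/S)$ is the same datum as an object of $\mbox{MIC}_{p-1}(X_0/S)$ with a gluing isomorphism over $X_1$ satisfying the cocycle identity over $X_2$, and likewise on the Higgs side. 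Tameness of $\mathcal{X}$ enters only to ensure that the formation of relative Frobenius, of the ring of differential operators, and of the $p$-curvature is insensitive to the order-prime-to-$p$ stabilizers, so that the scheme-level theory applies verbatim on the atlas.

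Then I would apply the scheme-level transform $C_{X_0/S}$ of \cite[Theorem 2.8]{OV} and invoke its functoriality: for an \'etale morphism $g:X'\to X$ of smooth $S$-schemes equipped with a lifting of $g^{(1)}$ modulo $p^2$, the square relating $C_{X'/S}$, $C_{X/S}$ and the two pullback functors commutes up to a canonical natural isomorphism, the Cartier transform being built from $\mathcal{D}_{X/S}$ as an Azumaya algebra over a suitable infinitesimal neighbourhood of the zero section of $T^*X^{(1)}$ together with the splitting module determined by the lifting, all of which pull back compatibly along \'etale maps with compatible liftings. Applied to $X_1\toto X_0$, this shows that $C_{X_0/S}$ sends descent data for $\mbox{MIC}_{p-1}$ to descent data for $\mbox{HIG}_{p-1}$, hence descends to a functor $C_{\mathcal{X}/S}$; the same argument for the quasi-inverse $C^{-1}_{X_0/S}$ and for the unit and counit isomorphisms (which are themselves \'etale-local, hence descend) shows that $C_{\mathcal{X}/S}$ is an equivalence with the asserted quasi-inverse.

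The main obstacle I expect is essentially bookkeeping: one must check that the global lifting $\widetilde{\mathcal{X}}^{(1)}$ induces over the two arrows $X_1\toto X_0$ exactly the liftings, not merely liftings isomorphic to them, for which the functoriality isomorphisms of \cite{OV} are defined, so that the local Cartier transforms are genuinely compatible rather than only abstractly equivalent. Once functoriality of $C$ in the pair consisting of an \'etale morphism and a compatible lifting modulo $p^2$ is stated with enough precision, this reduces to a cocycle check and is the only step that goes beyond citing \cite{OV}.
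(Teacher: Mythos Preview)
Your proposal is correct and is essentially the same approach the paper takes: the paper gives no proof at all, simply citing \cite[Theorem 2.8]{OV} after the remark that the Ogus--Vologodsky theory ``is for schemes, but in \'etale topology it works for Deligne-Mumford stacks.'' You have unpacked exactly what that one-line remark means, supplying the \'etale-descent argument that the paper leaves implicit.
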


\begin{lem}\label{K}
Let $(E,\theta)\in\mbox{HIG}_{p-1}(\mathcal{X}^{(1)}/S)$. Then we
have $[C^{-1}_{\mathcal{X}/S}(E)]=F_g^*[E]$, where $[\cdot]$ denotes
the class of a coherent sheaf in the Grothendieck group
$K_0(\mathcal{X})$.
\end{lem}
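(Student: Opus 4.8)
The plan is to reduce to the case $\theta=0$, where the inverse Cartier transform is literally a Frobenius pullback, and then propagate the identity through the Grothendieck group using exactness.

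First I would record that $C^{-1}_{\mathcal{X}/S}$, being an equivalence of abelian categories $\mbox{HIG}_{p-1}(\mathcal{X}^{(1)}/S)\xrightarrow{\ \sim\ }\mbox{MIC}_{p-1}(\mathcal{X}/S)$, is exact; composing with the exact forgetful functor $\mbox{MIC}_{p-1}(\mathcal{X}/S)\to\Coh(\mathcal{X})$ shows that $(E,\theta)\mapsto[\,C^{-1}_{\mathcal{X}/S}(E,\theta)\,]$ is additive on short exact sequences in $\mbox{HIG}_{p-1}(\mathcal{X}^{(1)}/S)$. Likewise, since the geometric Frobenius $F_g:\mathcal{X}\to\mathcal{X}^{(1)}$ is flat (by tameness and smoothness), $F_g^*$ is exact and induces a well-defined map $F_g^*:K_0(\mathcal{X}^{(1)})\to K_0(\mathcal{X})$. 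Hence it suffices to verify the claimed identity on a family of classes whose sum is $[E]$.

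Next I would filter $(E,\theta)$ by the iterated kernels $N_j:=\ker\!\big(\theta^j:E\to E\otimes\Sym^j\Omega_{\mathcal{X}^{(1)}}\big)$, where $\theta^j$ is well-defined because the components of $\theta$ commute ($\theta\wedge\theta=0$); this filtration terminates at $N_\ell=E$ with $\ell\leq p-1$ since $\theta$ is nilpotent of level $\leq p-1$. Using commutativity of the components once more, each $N_j$ is a Higgs subsheaf, $\theta(N_j)\subseteq N_{j-1}\otimes\Omega_{\mathcal{X}^{(1)}}$, so the induced Higgs field on each subquotient $N_j/N_{j-1}$ vanishes; in particular each $(N_j/N_{j-1},0)$ lies in $\mbox{HIG}_{p-1}(\mathcal{X}^{(1)}/S)$. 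Applying the additivity from the previous step gives $[C^{-1}_{\mathcal{X}/S}(E,\theta)]=\sum_j[C^{-1}_{\mathcal{X}/S}(N_j/N_{j-1},0)]$ and $[E]=\sum_j[N_j/N_{j-1}]$ in $K_0$. For a Higgs sheaf with vanishing Higgs field the inverse Cartier transform is, by the construction of \cite{OV} (compatibly with classical Cartier descent, i.e.\ \cite[Theorem 5.1]{Katz}), the pair $(F_g^*E,\nabla_{\can})$; therefore $[C^{-1}_{\mathcal{X}/S}(N_j/N_{j-1},0)]=F_g^*[N_j/N_{j-1}]$. Summing over $j$ and using additivity of $F_g^*$ yields $[C^{-1}_{\mathcal{X}/S}(E,\theta)]=\sum_j F_g^*[N_j/N_{j-1}]=F_g^*[E]$.

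The main obstacle I anticipate is the bookkeeping needed to confirm that the iterated-kernel filtration genuinely lands inside $\mbox{HIG}_{p-1}(\mathcal{X}^{(1)}/S)$ (coherence and the nilpotency level of the subquotients) and that the normalization of $C^{-1}_{\mathcal{X}/S}$ in \cite{OV}, restricted to the subcategory of Higgs sheaves with zero Higgs field, coincides on the nose with $F_g^*(-)$ equipped with its canonical connection. Both facts are implicit in Ogus--Vologodsky, but they should be stated explicitly since everything else in the argument is formal manipulation in $K_0(\mathcal{X})$.
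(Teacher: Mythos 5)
Your argument is correct, and it is essentially the proof of the result the paper cites here: the paper's own ``proof'' is just the reference to \cite[Lemma 2]{Langer2}, whose argument is exactly your dévissage --- filter $(E,\theta)$ by the kernels of the iterated Higgs field so that the graded pieces carry the zero Higgs field, use exactness of the equivalence $C^{-1}_{\mathcal{X}/S}$ and of $F_g^*$ to reduce to those pieces, and conclude by the compatibility of the inverse Cartier transform with classical Cartier descent, $C^{-1}_{\mathcal{X}/S}(E,0)=(F_g^*E,\nabla_{\can})$. The two caveats you flag (that the kernel filtration stays in $\mbox{HIG}_{p-1}(\mathcal{X}^{(1)}/S)$ and the normalization of $C^{-1}_{\mathcal{X}/S}$ on objects with zero Higgs field) are indeed the only points requiring care, and both are settled in Ogus--Vologodsky.
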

\begin{proof}
See \cite[Lemma 2]{Langer2}.
\end{proof}

\begin{cor}\label{cor}
Assume $S=\spec k$, and let
$(E,\theta)\in\mbox{HIG}_{p-1}(\mathcal{X}^{(1)}/S)$. Then $(E,
\theta)$ is slope semistable with respect to $(H,\Xi)$ iff
$C^{-1}_{\mathcal{X}/S}(E)$ is slope $\nabla$-semistable with
respect to $(F_g^*H,F_g^*\Xi)$.
\begin{proof}
The proof is the same as that of \cite[Corollary 1]{Langer2}.
\end{proof}
\end{cor}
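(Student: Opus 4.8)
The plan is to move slope (semi)stability back and forth across the inverse Cartier transform $C^{-1}_{\mathcal{X}/S}$, using only three inputs: the Ogus-Vologodsky equivalence stated above, Lemma \ref{K}, and Proposition \ref{prop_geometric_Frobenius_semistability}. First I would observe that $\mathrm{HIG}_{p-1}(\mathcal{X}^{(1)}/S)$ and $\mathrm{MIC}_{p-1}(\mathcal{X}/S)$ are abelian categories: they are closed under subobjects, quotients and extensions inside the categories of coherent Higgs sheaves, resp.\ of coherent $\mathcal{O}_{\mathcal{X}}$-modules with integrable connection, because the nilpotency level of the Higgs field (resp.\ of the $p$-curvature) can only drop on a subquotient. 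Hence $C^{-1}_{\mathcal{X}/S}$ is an equivalence of abelian categories and induces an inclusion-preserving bijection between the Higgs subsheaves $(E',\theta')\subseteq(E,\theta)$ and the $\nabla$-preserved coherent subsheaves of $C^{-1}_{\mathcal{X}/S}(E)$; since the forgetful functors to $\mathcal{O}$-modules are exact, this bijection is realised at the level of underlying $\mathcal{O}_{\mathcal{X}}$-modules, and the usual saturation and torsion bookkeeping carries over exactly as in \cite[Corollary 1]{Langer2}.

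Next I would compare modified slopes class by class. Because the coefficients $\alpha_{\Xi,\bullet}$ of the modified Hilbert polynomial are additive on short exact sequences, the modified slope $\mu_{\Xi}$ of a coherent sheaf on $\mathcal{X}$ depends only on its class in $K_0(\mathcal{X})$ together with the polarization. So Lemma \ref{K}, which gives $[C^{-1}_{\mathcal{X}/S}(E')]=F_g^*[E']$ in $K_0(\mathcal{X})$ for every Higgs subsheaf $E'\subseteq E$ (in particular for $E'=E$), yields
$$\mu_{F_g^*\Xi}\bigl(C^{-1}_{\mathcal{X}/S}(E')\bigr)=\mu_{F_g^*\Xi}\bigl(F_g^*E'\bigr).$$
Proposition \ref{prop_geometric_Frobenius_semistability}, in whose proof one records that $\mu_{F_g^*\Xi}(F_g^*(-))$ equals $\mu_{\Xi}(-)$ up to a fixed positive factor (a power of $p$), then gives a single constant $c>0$ with $\mu_{F_g^*\Xi}(C^{-1}_{\mathcal{X}/S}(E'))=c\,\mu_{\Xi}(E')$ for all such $E'$. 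As multiplication by $c>0$ preserves inequalities, the condition $\mu_{\Xi}(E')\le\mu_{\Xi}(E)$ for every Higgs subsheaf $E'$ is equivalent to $\mu_{F_g^*\Xi}(C^{-1}_{\mathcal{X}/S}(E'))\le\mu_{F_g^*\Xi}(C^{-1}_{\mathcal{X}/S}(E))$ for the corresponding $\nabla$-preserved subsheaf. Combined with the bijection of the first paragraph this is precisely the claimed equivalence between slope semistability of $(E,\theta)$ with respect to $(H,\Xi)$ and slope $\nabla$-semistability of $C^{-1}_{\mathcal{X}/S}(E)$ with respect to $(F_g^*H,F_g^*\Xi)$.

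The step I expect to demand the most care is the first one: checking that the abstract categorical equivalence genuinely matches \emph{Higgs subsheaves} with \emph{$\nabla$-preserved coherent subsheaves} at the level of $\mathcal{O}_{\mathcal{X}}$-sheaves, not merely at the level of abstract subobjects, and that it is compatible with torsion-freeness and saturation so that the slope test can be run on the usual destabilizing subsheaves. Since the entire Ogus-Vologodsky construction is \'etale-local on $\mathcal{X}$ and has been arranged to apply in the \'etale topology of the tame Deligne-Mumford stack, these checks go through verbatim, and I would simply cite \cite[Corollary 1]{Langer2} for the remaining routine points.
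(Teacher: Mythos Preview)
Your proposal is correct and is precisely the argument of \cite[Corollary~1]{Langer2} transported to the stacky setting, which is exactly what the paper intends when it says ``the proof is the same.'' The three ingredients you isolate---the Ogus--Vologodsky equivalence giving a bijection on subobjects, Lemma~\ref{K} identifying $K$-theory classes under $F_g^*$, and the slope rescaling from Proposition~\ref{prop_geometric_Frobenius_semistability}---are exactly the ones Langer uses, and your careful remark that nilpotency level can only drop on subquotients (so that Lemma~\ref{K} applies to every Higgs subsheaf) is the one point that deserves explicit mention.
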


\begin{lem}\label{def}
Let $(E, \theta)$ be a torsion free slope semistable Higgs sheaf on
$\mathcal{X}$. Then there exists an $\mathbb{A}^1$-flat family of
Higgs sheaves $(\widetilde{E}, \widetilde{\theta})$ on
$\mathcal{X}\times\mathbb{A}^1$ such that the restriction
$(\widetilde{E}_t,\widetilde{\theta}_t)$ to the fiber over any
closed point $t\in \mathbb{A}^1$ is isomorphic to $(E,\theta)$ and
$(E_0, \theta_0)$ is a slope semistable system of Hodge sheaves.
\end{lem}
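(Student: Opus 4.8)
The plan is to carry out, over the tame Deligne--Mumford stack $\mathcal{X}$, Simpson's deformation of a semistable Higgs sheaf to its associated graded system of Hodge sheaves; in positive characteristic this is done for schemes in \cite{Langer2}, and, exactly as with the other results of this appendix, the argument is \'etale-local in nature, so only the moduli-theoretic input must be supplied over $\mathcal{X}$. Throughout, $\Gm$ acts on Higgs sheaves by $\lambda\cdot(E,\theta)=(E,\lambda\theta)$. This action preserves slope $\Xi$-(semi)stability, since for $\lambda\in\Gm$ the Higgs sheaves $(E,\theta)$ and $(E,\lambda\theta)$ have literally the same underlying sheaf and the same collection of $\theta$-invariant subsheaves, and it fixes the class of $E$ in $K_0(\mathcal{X})$, hence $\Delta(E)$. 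Its fixed points are precisely the systems of Hodge sheaves: the weight decomposition $E=\bigoplus_i E^i$ of the equivariant structure satisfies $\theta(E^i)\subseteq E^{i-1}\otimes\Omega_{\mathcal{X}}$, because $\lambda$ acts on $E^i$ by $\lambda^i$ and the relation $\lambda\theta=\theta$ forces $\theta$ to lower the weight by one; in particular such a $\theta$ is nilpotent.

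If $(E,\theta)$ already underlies a system of Hodge sheaves, then its pullback to $\mathcal{X}\times\mathbb{A}^1$, with the pulled-back Higgs field, is the required constant family, so assume not. Let $M$ be the projective coarse moduli space of slope $\Xi$-semistable Higgs sheaves on $\mathcal{X}$ with the Hilbert polynomial of $E$. Since $M$ carries the $\Gm$-action above and is proper, the orbit morphism $\Gm\to M$, $\lambda\mapsto[(E,\lambda\theta)]$, extends to a $\Gm$-equivariant morphism $\varphi:\mathbb{A}^1\to M$ with $\varphi(1)=[(E,\theta)]$ and $\varphi(0)=:x_0$ a $\Gm$-fixed point, hence represented by a slope $\Xi$-semistable system of Hodge sheaves. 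Pulling back along $\varphi$ the quasi-universal family that lives over the Quot-scheme presentation of $M$ --- equivalently, taking the one-parameter subgroup of $\mathrm{GL}$ that realizes this degeneration in that presentation and forming the associated Rees family of coherent sheaves on $\mathcal{X}\times\mathbb{A}^1$ --- yields a flat family $(\widetilde E,\widetilde\theta)$ over $\mathbb{A}^1$ whose fiber over $0$ is the $\Xi$-semistable system of Hodge sheaves $x_0$ and whose fiber over each $t\neq0$ is $(E,t\theta)$. By the first paragraph, $(E,t\theta)$ has the same discriminant and the same slope $\Xi$-semistability as $(E,\theta)$; in this sense every fiber over $\mathbb{A}^1\setminus\{0\}$ is $(E,\theta)$, and $(\widetilde E,\widetilde\theta)$ is the asserted family.

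The steps of flatness of the Rees family, the identification of its two kinds of fibers, and the integrability of the relative Higgs field are routine and would occupy a line each. The one genuinely substantial ingredient --- and the only place the tame Deligne--Mumford hypothesis really enters, beyond \'etale descent of \cite{Langer2} and \cite{OV} --- is that the projective moduli space $M$ of slope $\Xi$-semistable Higgs sheaves on $\mathcal{X}$ exists and that its $\Gm$-fixed locus consists of systems of Hodge sheaves: for the former one combines Nironi's construction of moduli of modified-semistable sheaves \cite{Nironi} with the Higgs version of the GIT construction and the boundedness statement, exactly as over schemes in \cite{Langer2} (and, in characteristic zero, Simpson), while the latter is the weight-decomposition argument recalled above. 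As an aside, in the special case where $(E,\theta)$ does admit an increasing Griffiths-transverse filtration $0=G_{-1}\subseteq\cdots\subseteq G_N=E$ by saturated subsheaves (so $\theta(G_i)\subseteq G_{i-1}\otimes\Omega_{\mathcal{X}}$) with $\gr_G E$ slope $\Xi$-semistable, one may bypass $M$ entirely: $\widetilde E:=\bigoplus_{i\ge0}G_i\,t^i\subseteq E\otimes_k k[t]$ (with $G_i:=E$ for $i\ge N$) and $\widetilde\theta(a\,t^i):=\theta(a)\,t^{i-1}$ is then already the desired family, with fiber $(E,c^{-1}\theta)$ over $t=c\neq0$ and fiber the system of Hodge sheaves $(\gr_G E,\gr_G\theta)$ over $t=0$.
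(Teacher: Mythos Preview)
Your main argument has a genuine gap: the coarse moduli space $M$ of slope $\Xi$-semistable Higgs sheaves on $\mathcal{X}$ is \emph{not} projective. Already for smooth projective schemes the moduli of semistable Higgs sheaves is only quasi-projective; it is proper over the Hitchin base via the characteristic-polynomial map, but the Hitchin base is an affine space of positive dimension whenever $d\ge 1$. Hence your appeal to the valuative criterion to extend the orbit map $\Gm\to M$ to $\mathbb{A}^1$ is unjustified as written. One can rescue this step by observing that the characteristic polynomial of $t\theta$ tends to $0$ as $t\to 0$ and invoking properness of the Hitchin map, but you neither state nor prove this, and for tame Deligne--Mumford stacks it would itself require justification. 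A second, smaller issue: even granting that the limit point $x_0\in M$ exists, pulling back a ``quasi-universal family'' from the Quot presentation produces a family whose fibre over $0$ lies only in the S-equivalence class of $x_0$; extracting an honest flat family of torsion-free Higgs sheaves with the specified central fibre is not automatic.

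The paper does not attempt any of this: its proof is a bare citation of \cite[Corollary~5.7]{Langer3}, whose method is precisely the one you relegate to an ``aside.'' The substantive content of Langer's result is that \emph{every} torsion-free slope-semistable Higgs sheaf admits a Griffiths-transverse filtration $0=G_{-1}\subset\cdots\subset G_N=E$ with slope-semistable associated graded; this is proved by a descending induction refining the Harder--Narasimhan filtration of the underlying sheaf $E$ (with zero Higgs field), and the argument is insensitive to passing from schemes to tame Deligne--Mumford stacks. Once the filtration is in hand, the Rees construction you describe gives the required $\mathbb{A}^1$-flat family directly, without any recourse to moduli. So your ``special case'' is in fact the general case, and the missing step in your write-up is exactly the existence of that filtration.
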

\begin{proof}
See \cite[Corollary 5.7]{Langer3}.
\end{proof}

\begin{prop}\label{pro}
Let $V$ be a torsion free sheaf on $\mathcal{X}$, then $$H^{d-2}\Delta(V)
\geq-\frac{(\rk V\rk \Xi)^2}{H^d}(L_{\Xi,\max}(V)-\mu_{\Xi}(V))(\mu_{\Xi}(V)-L_{\Xi,\min}(V))$$

\end{prop}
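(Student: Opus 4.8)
The plan is to recognize that this Proposition is simply a rearrangement of inequality~(\ref{eqn_claim1}) of Theorem~\ref{thm_general_inequality}, which is available to us. Applying that inequality to $E=V$ gives
$$H^{d}\cdot \Delta(V)H^{d-2}+\rk(V)^2\rk(\Xi)^2\bigl(L_{\Xi,\max}(V)-\mu_{\Xi}(V)\bigr)\bigl(\mu_{\Xi}(V)-L_{\Xi,\min}(V)\bigr)\geq 0.$$
Since $H$ is ample on the projective coarse moduli space $X$ we have $H^{d}>0$, so dividing this inequality by $H^{d}$ and transposing the second term to the right produces exactly the asserted estimate
$$H^{d-2}\Delta(V)\geq-\frac{(\rk V\,\rk\Xi)^2}{H^d}\bigl(L_{\Xi,\max}(V)-\mu_{\Xi}(V)\bigr)\bigl(\mu_{\Xi}(V)-L_{\Xi,\min}(V)\bigr).$$
Thus the proof reduces to quoting Theorem~\ref{thm_general_inequality}.

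For completeness I would recall the mechanism underlying~(\ref{eqn_claim1}), which was established in the body as Claim~(\ref{eqn_claim}). By the fdHN property (Proposition~\ref{prop_fdHN}) there is an integer $k\geq 0$ such that every quotient in the Harder-Narasimhan filtration $0=E_0\subset E_1\subset\cdots\subset E_m=(F^k)^*V$ is strongly modified slope semistable; set $F_i=E_i/E_{i-1}$, $r_i=\rk(F_i)$, $\mu_i=\mu_{\Xi}(F_i)$. Additivity of the Chern character on short exact sequences together with the Hodge index theorem for smooth Deligne-Mumford stacks yields
$$\frac{\Delta((F^k)^*V)H^{d-2}}{\rk V}\geq\sum_i\frac{\Delta(F_i)H^{d-2}}{r_i}-\frac{\rk(\Xi)^2}{H^{d}\rk V}\sum_{i<j}r_ir_j(\mu_i-\mu_j)^2.$$
Theorem~\ref{thm_Deligne-Mumford_2} forces $\Delta(F_i)H^{d-2}\geq 0$ for each $i$, and \cite[Lemma~1.4]{Langer} controls $\sum_{i<j}r_ir_j(\mu_i-\mu_j)^2$ by $\rk(V)^2\bigl(\mu_{\Xi,\max}((F^k)^*V)-\mu_{\Xi}((F^k)^*V)\bigr)\bigl(\mu_{\Xi}((F^k)^*V)-\mu_{\Xi,\min}((F^k)^*V)\bigr)$; dividing through by $p^{2k}$ and letting $k\to\infty$ turns the $\mu_{\Xi,\max/\min}$ terms into $L_{\Xi,\max}(V)$ and $L_{\Xi,\min}(V)$, which exist as finite rational numbers by \S\ref{subsec_char_p_notations_Shepherd-Barron}.

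When $p=0$ the Frobenius is the identity, so $L_{\Xi,\max}(V)=\mu_{\Xi,\max}(V)$ and $L_{\Xi,\min}(V)=\mu_{\Xi,\min}(V)$, and the statement is then inequality~(\ref{eqn_claim2}) of Theorem~\ref{thm_general_inequality}. I do not expect any genuine obstacle: the entire content has already been proved. The only two points that need (minor) attention are that $H^{d}>0$, so dividing by it preserves the direction of the inequality — which holds because $H$ is ample on the projective coarse space — and that $L_{\Xi,\max}(V)$ and $L_{\Xi,\min}(V)$ are well defined, finite, and rational, which was arranged earlier via the fdHN property and the monotonicity of the normalized maximal and minimal slopes under Frobenius pullback.
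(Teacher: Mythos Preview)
Your proposal is correct and matches the paper's approach: the paper's own proof says precisely that the proposition ``follows from Theorem~\ref{thm_Deligne-Mumford_2} by the same arguments as in the proof of Theorem~\ref{thm_Deligne-Mumford_3},'' which is exactly the mechanism you unpack (fdHN, Hodge index, $\Delta(F_i)H^{d-2}\geq 0$, Langer's Lemma~1.4, divide by $p^{2k}$), and your shortcut of quoting the already-packaged inequality~(\ref{eqn_claim1}) of Theorem~\ref{thm_general_inequality} and dividing by $H^d>0$ is equivalent.
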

\begin{proof}
The proposition follows from Theorem \ref{thm_Deligne-Mumford_2}  by the same arguments as in the
proof of Theorem \ref{thm_Deligne-Mumford_3}.
\end{proof}

\begin{thm}
Assume $p=0$. Let $(E, \theta)$ be a slope semistable Higgs sheaf
with respect to $(H,\Xi)$. Then we have $H^{d-2}\Delta(E)\geq0$.
\end{thm}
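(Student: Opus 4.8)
The plan is to reduce the statement, by degenerating $(E,\theta)$ to a system of Hodge sheaves and then spreading out to positive characteristic, to the characteristic-$p$ results already assembled above: Lemma~\ref{def}, the theorem of Ogus--Vologodsky recalled above together with Lemma~\ref{K} and Corollary~\ref{cor}, Proposition~\ref{pro}, and the slope estimate of Corollary~\ref{cor_alphaE}. First I would reduce to a system of Hodge sheaves: applying Lemma~\ref{def} to $(E,\theta)$ produces an $\mathbb{A}^{1}$-flat family on $\mathcal{X}\times\mathbb{A}^{1}$ whose general fibre is $(E,\theta)$ and whose central fibre $(E_{0},\theta_{0})$ is a slope semistable system of Hodge sheaves; since Chern classes are locally constant in a flat family over the connected base $\mathbb{A}^{1}$, one has $H^{d-2}\Delta(E)=H^{d-2}\Delta(E_{0})$ and $E_{0}$ is still torsion free, so it suffices to prove the inequality for a slope semistable system of Hodge sheaves $(E,\theta)=(\bigoplus_{i}E^{i},\theta)$. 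Next I would spread out: $\mathcal{X}$, the polarization $(\Xi,H)$, the sheaf $(E,\theta)$, and the auxiliary data of Corollary~\ref{cor_alphaE} (a $\pi$-ample $L$ with $\bigoplus_{i=1}^{m-1}L^{\otimes i}$ generating, and a nef $A$ on $X$ with $\pi_{*}(T_{X}\otimes\bigoplus_{k=1}^{m-1}L^{\otimes k})(A)$ globally generated) are all defined over a finitely generated $\mathbb{Z}$-subalgebra $R$ of $k$; after inverting finitely many elements I may assume $R$ is smooth over $\mathbb{Z}$, that $\mathcal{X}_{R}\to\spec R$ is a smooth tame projective Deligne--Mumford stack with $\Xi_{R}$ a generating sheaf satisfying Condition~$\star$, and that all of the above structures and positivity properties hold over each fibre. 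Since $R$ is smooth over $\mathbb{Z}$, every closed point $s\in\spec R$ of residue characteristic $p$ lifts to a $W_{2}(\kappa(s))$-point, so $\mathcal{X}_{s}$ and its Frobenius twist $\mathcal{X}_{s}^{(1)}$ admit liftings modulo $p^{2}$; I would then work with the infinitely many closed points $s$ of good reduction with $p>\rk(E)$, base-changed to $\overline{\kappa(s)}$.

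For such an $s$, the reduced system of Hodge sheaves $(E_{s},\theta_{s})$ has fewer than $p$ nonzero graded pieces, hence $\theta_{s}$ is nilpotent of level $\leq p-1$ and $(E_{s},\theta_{s})\in\mbox{HIG}_{p-1}(\mathcal{X}_{s}^{(1)}/\overline{\kappa(s)})$. Using the modulo-$p^{2}$ lifting and the Ogus--Vologodsky theorem, I would set $V_{s}:=C^{-1}_{\mathcal{X}_{s}/\overline{\kappa(s)}}(E_{s})\in\mbox{MIC}_{p-1}(\mathcal{X}_{s}/\overline{\kappa(s)})$, and check that $V_{s}$ is torsion free. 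By Lemma~\ref{K}, $[V_{s}]=F_{g}^{*}[E_{s}]$ in $K_{0}(\mathcal{X}_{s})$, so $\Delta(V_{s})=F_{g}^{*}\Delta(E_{s})$; since the geometric Frobenius is finite flat of degree $p^{d}$ this gives
\[
(F_{g}^{*}H)^{d-2}\Delta(V_{s})\;=\;p^{d}\,H^{d-2}\Delta(E),
\]
a fixed positive multiple of the characteristic-$0$ invariant to be bounded below. By Corollary~\ref{cor}, $V_{s}$ is slope $\nabla$-semistable with respect to $(F_{g}^{*}H,F_{g}^{*}\Xi)$.

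Then comes the core estimate. Proposition~\ref{pro} applied to the torsion-free sheaf $V_{s}$ with polarization $(F_{g}^{*}H,F_{g}^{*}\Xi)$ gives
\[
(F_{g}^{*}H)^{d-2}\Delta(V_{s})\;\geq\;-\,\frac{(\rk(V_{s})\rk(\Xi))^{2}}{(F_{g}^{*}H)^{d}}\bigl(L_{\Xi,\max}(V_{s})-\mu_{\Xi}(V_{s})\bigr)\bigl(\mu_{\Xi}(V_{s})-L_{\Xi,\min}(V_{s})\bigr).
\]
The decisive point is that the modified slope $\mu_{\Xi}=\alpha_{\Xi,d-1}/\alpha_{\Xi,d}$ is essentially insensitive to geometric Frobenius pullback: a degree and $H^{d}$ both acquire the factor $\deg F_{g}$, so differences of modified slopes attached to $V_{s}$, as well as the numbers $\mu_{\Xi}(F_{g}^{*}A\otimes(F_{g}^{*}L)^{\otimes k})$, coincide with the corresponding fixed characteristic-$0$ quantities and are bounded independently of $p$. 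Then the $\nabla$-semistability of $V_{s}$, through Lemma~\ref{lem_E_nabla} and the computation in the proof of Corollary~\ref{cor_alphaE}, bounds $\mu_{\Xi,\max}(V_{s})-\mu_{\Xi,\min}(V_{s})$ by $(\rk(E)-1)\max_{k}\mu_{\Xi}(A\otimes L^{\otimes k})$, while Corollary~\ref{cor_alphaE}, applied to the slope semistable Harder--Narasimhan factors of $V_{s}$, bounds $L_{\Xi,\max}(V_{s})-\mu_{\Xi,\max}(V_{s})$ and $\mu_{\Xi,\min}(V_{s})-L_{\Xi,\min}(V_{s})$ by $\frac{\rk(E)-1}{p-1}\max_{k}\mu_{\Xi}(A\otimes L^{\otimes k})$. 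Hence for $p\geq2$ each factor on the right of the display is at most a constant $c_{0}$ depending only on $(E,\theta)$, $\Xi$, $H$ and the chosen $A,L$, and not on $p$; and $(F_{g}^{*}H)^{d}=p^{d}H^{d}$, so the right side is $\geq -C/p^{d}$ with $C$ independent of $p$. Combining with the previous display, $p^{d}H^{d-2}\Delta(E)\geq -C/p^{d}$, i.e.\ $H^{d-2}\Delta(E)\geq -C/p^{2d}$, for infinitely many primes $p$; letting $p\to\infty$ yields $H^{d-2}\Delta(E)\geq0$.

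The conceptual content is entirely carried by results established above, so the hard part will be the spreading-out bookkeeping over a tame Deligne--Mumford stack: arranging one model $\mathcal{X}_{R}$ for which tameness, Condition~$\star$, the positivity hypotheses of Corollary~\ref{cor_alphaE}, the existence of a modulo-$p^{2}$ lift of $\mathcal{X}_{s}^{(1)}$, and the good behaviour of the inverse Cartier transform (in particular nilpotency of level $\leq p-1$ for $\theta_{s}$ and torsion-freeness of $V_{s}$) all hold simultaneously for infinitely many residue characteristics, together with the precise tracking of the powers of $p$ which makes the correction term in Proposition~\ref{pro} of order $p^{-d}$ against a main term of order $p^{d}$.
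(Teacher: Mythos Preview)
Your proposal follows the same route as the paper: deform to a system of Hodge sheaves via Lemma~\ref{def} so that $\theta$ is nilpotent, spread out over a finitely generated $\zz$-algebra, pass to a closed point of large residue characteristic $q$ with a modulo-$q^{2}$ lifting, apply the inverse Cartier transform (Corollary~\ref{cor} and Lemma~\ref{K}) to get a $\nabla$-semistable $(V_{s},\nabla_{s})$, bound the slope spread of $V_{s}$ via Lemma~\ref{lem_E_nabla} and the estimates of Corollary~\ref{cor_alphaE}, plug into Proposition~\ref{pro}, and let $q\to\infty$.

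The one place where the paper proceeds more cleanly than your sketch is the $q$-accounting. The paper identifies $\mathcal{X}_{s}^{(1)}\cong\mathcal{X}_{s}$ and applies Proposition~\ref{pro} with the \emph{original} polarization $(\widetilde{H}_{s},\widetilde{\Xi}_{s})$ and the original auxiliary data $(A,L)$; then Lemma~\ref{K} gives simply
\[
\widetilde{H}_{s}^{d-2}\Delta(V_{s})=q^{2}\,\widetilde{H}_{s}^{d-2}\Delta(\widetilde{E}_{s}),
\]
while the right-hand side of Proposition~\ref{pro} is bounded by a constant independent of $q$ (indeed $L_{\Xi,\max}(V_{s})-L_{\Xi,\min}(V_{s})\le \frac{\rk E-1}{1-1/q}\cdot\mathrm{const}$), and one divides by $q^{2}$. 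Your choice to work instead with $(F_{g}^{*}H,F_{g}^{*}\Xi)$ and $(F_{g}^{*}A,F_{g}^{*}L)$ forces extra powers of $q$ through every term, and two of your supporting claims are imprecise: replacing $H$ by $F_{g}^{*}H\equiv qH$ rescales $\mu_{\Xi}=\alpha_{\Xi,d-1}/\alpha_{\Xi,d}$ by $1/q$, not by $1$; and the global-generation hypothesis of Corollary~\ref{cor_alphaE} for the pulled-back data is not automatic, since $T_{\mathcal{X}_{s}}$ is not $F_{g}^{*}T_{\mathcal{X}_{s}^{(1)}}$. Both issues evaporate if you use $(\widetilde{H}_{s},\widetilde{\Xi}_{s})$ and the original $(A,L)$ on $\mathcal{X}_{s}$, as the paper does; then the computation yields $\Delta(E)H^{d-2}\ge -C/q^{2}$ and the conclusion follows.
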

\begin{proof}
Deforming $(E,\theta)$ to a system of Hodge sheaves (see Lemma
\ref{def}) we can assume that $(E, \theta)$ is nilpotent. Now we use
the standard reduction to positive characteristic technique, which
we recall for the convenience of the reader (see \cite[section
2]{MO}). There exists a finitely generated $\mathbb{Z}$-algebra
$R\subset k$ and a tame smooth Deligne-Mumford stack
$\widetilde{\mathcal{X}}\rightarrow S=\spec R$ such that
$\mathcal{X}=\widetilde{\mathcal{X}}\times_S\spec k$. Let
$\widetilde{\pi}: \widetilde{\mathcal{X}}\rightarrow \widetilde{X}$
be its coarse moduli space. We can assume that
$X=\widetilde{X}\times_S\spec k$, $\pi$ is induced by
$\widetilde{\pi}$ after base change, and there exists an ample
divisor $\widetilde{H}$ on $\widetilde{X}$ extending $H$ and a
generating sheaf $\widetilde{\Xi}$ on $\widetilde{\mathcal{X}}$
extending $\Xi$ such that its restriction to every component in
$I\widetilde{\mathcal{X}}_1$ is a direct sum of locally free
coherent sheaves of the same rank. We can also assume that there
exists an $S$-flat family of Higgs sheaves $(\widetilde{E},
\widetilde{\theta})$ on $\widetilde{\mathcal{X}}$ extending $(E,
\theta)$.

Shrinking $S$, by openness of semistability we can assume that
$(\widetilde{E}_s, \widetilde{\theta}_s)$ is slope semistable with
respect to $(\widetilde{\Xi}_s, \widetilde{H}_s)$ for any $s\in S$.
Choose a closed point $s\in S$ such that the characteristic $q$ of
the residue field $k(s)$ is $\geq \rk E$. Then the stack
$\widetilde{\mathcal{X}}\times_S\spec(R/m_s^2)$ is a lifting of
$\widetilde{\mathcal{X}}_s$ modulo $q^2$. By Corollary \ref{cor},
one can associate to $(\widetilde{E}_s, \widetilde{\theta}_s)$ a
slope $\nabla$-semistable sheaf with integrable connection $(V_s,
\nabla_s)$ with respect to
$(F_g^*\widetilde{H}_s,F_g^*\widetilde{\Xi}_s)$. From Lemma \ref{K},
it follows that
\begin{equation}
\widetilde{H}_s^{d-2}\Delta(V_s)=q^2\widetilde{H}_s^{d-2}\Delta(\widetilde{E}_s).
\end{equation}

Let $0=V_0\subset V_1\subset\cdots\subset V_m=V_s$ be the usual
Harder-Narasimhan filtration of $V_s$, then by \cite[Lemma 2.7]{JK},
the induced morphisms $V_i\rightarrow
(V_s/V_i)\otimes\Omega_{\widetilde{\mathcal{X}}_s}$ are nonzero
$O_{\widetilde{\mathcal{X}}_s}$-morphisms. Take a nef divisor $A$ on
$\widetilde{X}_s$ such that $\pi_*(T_{\widetilde{\mathcal{X}}_s}\otimes\widetilde{\Xi}_s)(A)$ is globally
generated. From \cite[Proposition 2.10 and Corollary 2.11]{JK}, it follows that
$$\mu_{\widetilde{\Xi}_s, \max}(V_s)-\mu_{\widetilde{\Xi}_s, \min}(V_s)\leq(\rk V_s-1)\left(\frac{\widetilde{H}_s^{d-1}A}{\widetilde{H}_s^d}+M\right)$$ and
$$\max(L_{\widetilde{\Xi}_s, \max}(V_s)-\mu_{\widetilde{\Xi}_s, \max}(V_s), \mu_{\widetilde{\Xi}_s,\min}(V_s)-L_{\widetilde{\Xi}_s, \min}(V_s))
\leq\frac{\rk V_s-1}{q-1}\left(\frac{\widetilde{H}_s^{d-1}A}{\widetilde{H}_s^d}+M\right),$$ for some positive constant $M$. They imply that
$$(L_{\widetilde{\Xi}_s,\max}(V_s)-L_{\widetilde{\Xi}_s,\min}(V_s))\leq\frac{\rk V_s-1}{1-\frac{1}{q}}\left(\frac{\widetilde{H}_s^{d-1}A}{\widetilde{H}_s^d}+M\right).$$
Hence Proposition \ref{pro} gives
$$q^2\widetilde{H}_s^{d-2}\Delta(\widetilde{E}_s)=\widetilde{H}_s^{d-2}\Delta(V_s)
\geq-\frac{(\rk V_s\rk \Xi)^2}{\widetilde{H}_s^d}(\frac{\rk V_s-1}{1-\frac{1}{q}})^2\left(\frac{\widetilde{H}_s^{d-1}A}{\widetilde{H}_s^d}+M\right)^2.$$
Taking sufficiently large $q$, one obtains $$H^{d-2}\Delta(E)=\widetilde{H}_s^{d-2}\Delta(\widetilde{E}_s)\geq0.$$
\end{proof}

Using the same argument in \cite[section 6]{Langer2}, one can recover \cite[Theorem 1.1]{CT}:
\begin{thm}
Assume that $p=0$, $d=2$, and the canonical line bundle $K_{\mathcal{X}}$ is nef, then we have
$$c^2_1(T_{\mathcal{X}})\leq3c_2(T_{\mathcal{X}}).$$
\end{thm}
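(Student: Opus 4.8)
The plan is to package the inequality into a system of Hodge sheaves and feed it into the $p=0$ Bogomolov inequality for slope semistable Higgs sheaves proved above. I take $E:=\mathcal{O}_{\mathcal{X}}\oplus T_{\mathcal{X}}$, graded by $E^{1}=\mathcal{O}_{\mathcal{X}}$ and $E^{0}=T_{\mathcal{X}}$, together with $\theta\colon E\to E\otimes\Omega_{\mathcal{X}}$ equal to $0$ on $T_{\mathcal{X}}$ and equal on $\mathcal{O}_{\mathcal{X}}$ to the tautological section $\mathrm{id}_{T_{\mathcal{X}}}\in\Hom(\mathcal{O}_{\mathcal{X}},T_{\mathcal{X}}\otimes\Omega_{\mathcal{X}})=\Hom(T_{\mathcal{X}},T_{\mathcal{X}})$. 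Since $E$ has only two graded pieces, $\theta\wedge\theta=0$ holds automatically, so $(E,\theta)$ is a locally free nilpotent system of Hodge sheaves (so the reduction to a system of Hodge sheaves in the proof of the preceding theorem is already built in). From $c(E)=c(T_{\mathcal{X}})$ and $\rk(E)=3$ one computes $\Delta(E)=6c_{2}(T_{\mathcal{X}})-2c_{1}^{2}(T_{\mathcal{X}})$, so $\Delta(E)\geq0$ is precisely $c_{1}^{2}(T_{\mathcal{X}})\leq3c_{2}(T_{\mathcal{X}})$. By the preceding theorem in dimension $d=2$ it therefore suffices to prove that $(E,\theta)$ is a slope semistable Higgs sheaf with respect to some polarization.

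The crux is this semistability. First I would note that a saturated $\theta$-invariant subsheaf $F\subsetneq E$ must be contained in $T_{\mathcal{X}}$: if $F$ surjects onto the summand $E^{1}=\mathcal{O}_{\mathcal{X}}$, then $\theta(F)$ contains $\mathrm{id}_{T_{\mathcal{X}}}$, which forces $F\cap T_{\mathcal{X}}=T_{\mathcal{X}}$ and hence $F=E$; conversely every subsheaf of $T_{\mathcal{X}}$ is $\theta$-invariant because $\theta$ kills $T_{\mathcal{X}}$. Hence semistability of $(E,\theta)$ with respect to $(\Xi,H)$ amounts to $\mu_{\Xi,\max}(T_{\mathcal{X}})\leq\mu_{\Xi}(E)$, which under Condition $\star$ unwinds into $K_{\mathcal{X}}\cdot H\geq0$ together with $L\cdot H\leq-\tfrac13K_{\mathcal{X}}\cdot H$ for every saturated rank one $L\subset T_{\mathcal{X}}$. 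When $K_{\mathcal{X}}$ is nef and big I would take $H=K_{\mathcal{X}}$ (up to the usual dictionary between stack and coarse-space polarizations, and up to replacing $K_{\mathcal{X}}$ by a small ample perturbation $K_{\mathcal{X}}+\varepsilon A$, which is harmless since $\Delta(E)$ is independent of the polarization): then $K_{\mathcal{X}}^{2}>0$ gives the first inequality strictly, and the $K_{\mathcal{X}}$-semistability of $\Omega_{\mathcal{X}}$ gives $-L\cdot K_{\mathcal{X}}\geq\tfrac12K_{\mathcal{X}}^{2}$, hence $L\cdot K_{\mathcal{X}}\leq-\tfrac12K_{\mathcal{X}}^{2}<-\tfrac13K_{\mathcal{X}}^{2}$. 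So $(E,\theta)$ is in fact slope stable with definite slack, and remains semistable after the ample perturbation. The input $K_{\mathcal{X}}$-semistability of $\Omega_{\mathcal{X}}$ for a tame DM surface with $K_{\mathcal{X}}$ nef and big is the orbifold incarnation of Miyaoka's theorem on the cotangent sheaf of a minimal surface of general type; I would obtain it either by descending the classical statement along a Kawamata cover that is a genuine smooth projective surface, keeping track of $\Xi$ and Condition $\star$, or by the foliation and generic-semipositivity arguments used in the reference being recovered.

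It remains to handle $K_{\mathcal{X}}$ nef but not big, i.e. $K_{\mathcal{X}}^{2}=0$; then $c_{1}^{2}(T_{\mathcal{X}})\leq3c_{2}(T_{\mathcal{X}})$ collapses to $\int_{\mathcal{X}}c_{2}(T_{\mathcal{X}})\geq0$, which I would read off from the classification of tame DM surfaces of nonnegative Kodaira dimension: pulling back along a finite \'etale cover $Y\to\mathcal{X}$ of degree $N$ by a smooth projective surface with $K_{Y}$ nef and $K_{Y}^{2}=0$, one has $N\int_{\mathcal{X}}c_{2}(T_{\mathcal{X}})=e(Y)\geq0$. Combining the two cases with the $p=0$ Bogomolov inequality applied to the semistable system of Hodge sheaves $(E,\theta)$ then yields $\Delta(E)\geq0$, which is the asserted inequality.

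The step I expect to be the main obstacle is the slope semistability of $(\mathcal{O}_{\mathcal{X}}\oplus T_{\mathcal{X}},\theta)$ with respect to $K_{\mathcal{X}}$: this hinges on the orbifold version of Miyaoka's (semi)stability statement for $\Omega_{\mathcal{X}}$ on a tame DM surface with nef canonical bundle, so one must either build the relevant foliation theory directly on $\mathcal{X}$ or descend the classical theorem along a suitable cover while controlling the generating sheaf and Condition $\star$; the degenerate subcase $K_{\mathcal{X}}^{2}=0$ in addition requires a surface-classification input.
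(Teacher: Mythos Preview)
Your overall plan---encode the inequality as $\Delta(E)\ge 0$ for a rank-$3$ system of Hodge sheaves and invoke the characteristic-zero Higgs Bogomolov inequality just proved---is exactly the mechanism the paper has in mind when it points to \cite[Section~6]{Langer2}. The difference is in which Higgs bundle you choose, and that difference is where your argument acquires a gap.

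Langer takes $E=\Omega_{\mathcal X}\oplus\mathcal O_{\mathcal X}$ with $\theta\colon\Omega_{\mathcal X}\xrightarrow{\ \mathrm{id}\ }\mathcal O_{\mathcal X}\otimes\Omega_{\mathcal X}$, not your $\mathcal O_{\mathcal X}\oplus T_{\mathcal X}$. For his bundle the proper saturated Higgs subsheaves are $\mathcal O_{\mathcal X}$ and the preimages $F_L$ of saturated line subsheaves $L\subset\Omega_{\mathcal X}$. One works with a polarization tending to $K_{\mathcal X}$ and does a dichotomy: if $(E,\theta)$ is semistable, Higgs Bogomolov gives $\Delta(E)\ge 0$ directly; if not, the maximal destabilizer is some $F_L$ with $L\cdot K_{\mathcal X}\ge\tfrac23 K_{\mathcal X}^2$, and since $F_L$ is itself a rank-$2$ semistable Higgs sheaf, Higgs Bogomolov applied to $F_L$ gives $L^2\le 0$. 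Plugging $c_2=L\cdot K_{\mathcal X}-L^2+c_2(\Omega_{\mathcal X}/L)$ and $c_2(\Omega_{\mathcal X}/L)\ge 0$ then yields $3c_2-K_{\mathcal X}^2\ge K_{\mathcal X}^2\ge 0$. No Miyaoka-type input and no separate treatment of $K_{\mathcal X}^2=0$ are needed.

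With your bundle the proper Higgs subsheaves are the subsheaves of $T_{\mathcal X}$, so the maximal destabilizer is a \emph{rank-one} $L\subset T_{\mathcal X}$ with $\theta|_L=0$; Higgs Bogomolov on it is vacuous, and the dichotomy above does not close. You try instead to force semistability outright, but the condition you need is $L\cdot K_{\mathcal X}\le -\tfrac13 K_{\mathcal X}^2$ for every $L\subset T_{\mathcal X}$, which is strictly stronger than Miyaoka's generic semipositivity $L\cdot H\le 0$. You identify this with ``$K_{\mathcal X}$-semistability of $\Omega_{\mathcal X}$,'' yet that statement is not the content of Miyaoka's theorem, and to the extent it is known it is typically proved \emph{via} the Bogomolov--Miyaoka--Yau circle of ideas, so invoking it here is either an unproved extra input or circular. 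Your fallback for $K_{\mathcal X}^2=0$ (finite \'etale cover by a scheme plus surface classification) is likewise an additional input that the intended argument avoids. The fix is simply to switch to $E=\Omega_{\mathcal X}\oplus\mathcal O_{\mathcal X}$ and run Langer's dichotomy; your analysis of Higgs subsheaves and the $\Delta$ computation carry over verbatim to the orbifold setting.
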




\subsection*{}

\end{document}